\let\oldsqrt\sqrt
\def\sqrt{\mathpalette\DHLhksqrt}
\def\DHLhksqrt#1#2{%
\setbox0=\hbox{$#1\oldsqrt{#2\,}$}\dimen0=\ht0
\advance\dimen0-0.2\ht0
\setbox2=\hbox{\vrule height\ht0 depth -\dimen0}%
{\box0\lower0.4pt\box2}}
\newcommand{\R}{\mathbb{R}} 
\newcommand{\N}{\mathbb{N}} 
\newcommand{\dist}{\textnormal{dist}} 
\renewcommand{\phi}{\varphi}
\newcommand{\m}{m}
\newcommand{\cC}{{\mathcal C}}
\newcommand{\cE}{{\mathcal E}}
\newcommand{\cF}{{\mathcal F}}
\newcommand{\cH}{{\mathcal H}}
\newcommand{\cL}{{\mathcal L}}
\newcommand{\Ds}{{(-\Delta)}^s}
\newcommand{\eps}{\varepsilon}
\theoremstyle{definition}
\newtheorem{defi}{Definition}[section]
\theoremstyle{plain} 
\newtheorem{thm}[defi]{Theorem}
\newtheorem{prop}[defi]{Proposition}
\newtheorem{lemma}[defi]{Lemma}
\newtheorem{cor}[defi]{Corollary}
\theoremstyle{definition}
\numberwithin{equation}{section} 
\title{
Positive powers of the Laplacian: \\ from hypersingular integrals to boundary value problems
}
\author{
Nicola Abatangelo\footnote{D\'{e}partement de math\'{e}matique, Universit\'{e} Libre de Bruxelles CP 214, Boulevard du Triomphe, 1050 Ixelles, Belgium, nicola.abatangelo@ulb.ac.be}, 
\ Sven Jarohs\footnote{Institut f\"ur Mathematik, Goethe-Universit\"at, Robert-Mayer-Stra\ss e 10, 60054 Frankfurt, Germany, jarohs@math.uni-frankfurt.de}, and
Alberto Salda\~{n}a\footnote{Institut f\"ur Analysis, Karlsruhe Institute for Technology, Englerstra\ss e 2, 76131, Karlsruhe, Germany, \newline alberto.saldana@partner.kit.edu}
}
\date{}
\begin{document}

\maketitle

\begin{abstract}
\noindent Any positive power of the Laplacian is related via its Fourier symbol to a
hypersingular integral with finite differences. We show how this yields a pointwise evaluation which is more flexible than other notions 
used so far in the literature for powers larger than 1; in particular, this evaluation can be applied to more general boundary value problems
and we exhibit explicit examples. We also provide a natural variational framework and, using an asymptotic analysis, 
we prove how these hypersingular integrals reduce to polyharmonic
operators in some cases. Our presentation aims to be as self-contained as possible and relies on
elementary pointwise calculations and known identities for special functions.
\end{abstract}

\section{Introduction}

Any positive power $s>0$ of the (minus) Laplacian, \emph{i.e.} $(-\Delta)^s$, has the same Fourier symbol (see \cite[Chapter 5]{SKM93} or Theorem \ref{main:thm} below) as the following hypersingular integral,
\begin{align}\label{Ds:def}
L_{m,s} u(x):=\frac{c_{N,\m,s}}{2}\int_{\R^N} \frac{\delta_\m u(x,y)}{|y|^{N+2s}} \ dy,\qquad x\in \R^N,
\end{align}
where $N\in\N$ is the dimension, $m\in\N$, $s\in(0,m)$,
\begin{align*}
\delta_\m u(x,y):= \sum_{k=-\m}^\m (-1)^k { \binom{2m}{m-k}} u(x+ky) \qquad \text{ for }x,y\in \R^N
\end{align*}
is a finite difference of order $2m$, and $c_{N,\m,s}$ is a \emph{positive} constant given by
\begin{align}\label{cNms:def}
c_{N,\m,s}
:=\left\{\begin{aligned}
&\frac{4^{s}\Gamma(\frac{N}{2}+s)}{ \pi^{\frac{N}{2}}\Gamma(-s) }\Big(\sum_{k=1}^{\m}(-1)^{k}{ \binom{2m}{m-k}} k^{2s}\Big)^{-1},&& s\in(0,m)\backslash \N,\\
&\frac{4^{s}\Gamma(\frac{N}{2}+s)s!}{2\pi^{\frac{N}{2}}} \Big(\sum_{k=2}^m (-1)^{k-s+1}{\binom{2m}{m-k}} k^{2s} \ln(k)\Big)^{-1},&& s\in\{1,\ldots,m-1\}.
\end{aligned}\right.
\end{align}
The operator \eqref{Ds:def} is well defined for sufficiently smooth functions satisfying a growth condition at infinity such as $u\in \cL^1_s$, where 
\begin{equation*}
 \cL^1_{s}:=\left\{u\in L^1_{loc}(\R^N)\;:\; \|u\|_{\cL^1_{s}}<\infty \right\}, \qquad \|u\|_{\cL^1_{s}}:=\int_{\R^N}\frac{|u(x)|}{1+|x|^{N+2s}}\ dx\qquad \text{ for }s>0.
\end{equation*}
 
If $s\in(0,1)$, then
\begin{align}\label{frac-lapl}
L_{1,s}u(x)=(-\Delta)^s u(x):=-\frac{4^{s}\Gamma(\frac{N}{2}+s)}{2\pi^{\frac{N}{2}}\Gamma(-s)}\int_{\R^N} \frac{2u(x)-u(x+y)-u(x-y)}{|y|^{N+2s}}\ dy,\qquad x\in\R^N.
\end{align}
The operator $L_{1,s}$ is usually called the \textit{fractional Laplacian}, and it has has attracted much attention from the PDE perspective in recent years, see \cite{BV15} and the references therein.  

If $s=1$, then \eqref{Ds:def} reduces pointwisely to the usual Laplacian, as a matter of fact, 
 \begin{align*}
-\sum_{i=1}^N\partial_{ii}\ u(x)=\frac{\Gamma(\frac{N}{2}+1)}{4\pi^{\frac{N}{2}}\ln (2)}\int_{\R^N}\frac{u(x+2y)-4u(x+y)+6u(x)-4u(x-y)+u(x-2y)}{|y|^{N+2}}\;dy.
 \end{align*}
Actually, as shown in Theorem \ref{van:cor} below, for any $n,m\in\N$, $L_{m+n,n}$ is the polyharmonic operator $(-\Delta)^n u:=(-\sum_{i=1}^{N}\partial_{ii} )^{n}u$ whenever $u$ is $2n$-times continuously differentiable and belongs to $\cL^1_n$. We refer to \cite{GGS10} for a survey on boundary value problems associated to $(-\Delta)^n$. 

A similar formula as \eqref{Ds:def} using compactly supported kernels is used in \cite{TD16} to study some variational problems arising from peridynamics; however, for $s>1$, the operator \eqref{Ds:def} is usually not used directly in boundary value problems. Instead, for $s=n+\sigma>1$ with $n\in\N$ and $\sigma\in(0,1)$, one of the following options is preferred,
\begin{align} \label{notions}
 (i)\ \  (-\Delta)^{n}(-\Delta)^\sigma u(x),\qquad (ii)\ 
   (-\Delta)^\sigma(-\Delta)^{n} u(x),
\end{align}
\begin{align}\label{notions2}
(iii)\quad\left\lbrace\begin{aligned}
& (-\Delta)^\frac{n}{2} (-\Delta)^\sigma (-\Delta)^\frac{n}{2}u(x) && \text{ for $n$ even,} \\
& \sum_{i=1}^N (-\Delta)^\frac{n-1}{2}(\partial_i(-\Delta)^\sigma(\partial_i(-\Delta)^\frac{n-1}{2} u(x))) &&  \text{ for $n$ odd},
\end{aligned}\right.
\end{align}
see for example \cite{AJS16,AJS17,DG2016,RS15}.  These three possibilities are valid choices and their adequacy depends on the problem and the set of solutions that are being studied; for more details, see \cite[Remark A.4]{AJS17}. However, \eqref{notions} and \eqref{notions2} do not capture the full potential of the fractional Laplacian $(-\Delta)^s$.  Indeed, let $B_r$ denote the ball of radius $r$ centered at zero, $B:=B_1$, $\psi\in \cL^1_s\backslash \cL^1_{s-1}$ with $\psi=0$ in $B_{r}$ for some $r>1$, and consider the function 
\begin{align}\label{u:def}
 u(x):=  \frac{(-1)^n\Gamma(\frac{N}{2})}{\Gamma(\sigma)\Gamma(1-\sigma)\pi^{\frac{N}{2}}}\int_{\R^N\backslash\overline{B}}
 \frac{(1-|x|^2)_+^s\psi(y)}{(|y|^2-1)^s|x-y|^N}
\ dy+\chi_{\mathbb R^N\backslash\overline{B}}(x)\psi(x),\qquad x\in\R^N,
\end{align}
where $\chi_A$ is the characteristic function of a set $A\subset R^N$. Then, by \cite[Corollary 3.3]{AJS17}, $u\in\cC^\infty(B)\cap\cL^1_s$ is an $s$-harmonic function in the distributional sense, that is, 
\begin{align}\label{dist}
  \int_{\R^N}u(x)(-\Delta)^s\phi(x)\ dx=0 \qquad \text{ for all $\phi\in \cC^{\infty}_c(B)$,}
 \end{align}
where $\cC^{\infty}_c(B)$ denotes the space of smooth functions with compact support in $B$. In \eqref{dist}, $(-\Delta)^s$ can be understood pointwisely either with $(i)$, $(ii)$, or $(iii)$, since all these notions are equivalent for functions in $\cC^{\infty}_c(B)$ (see Proposition \ref{interchange} below). However, these pointwise evaluations cannot be applied to $u$ either because of its growth at infinity ($(-\Delta)^\sigma$ can only be applied to functions in $\cL_\sigma^1$) or because of its global regularity (the functions $(-\Delta)^\frac{n}{2} u$ and $(-\Delta)^n u$ may not exist in $\R^N\backslash B$, since $\psi$ is only required to be in $\cL_s^1$ and $u=\psi$ in $\R^N\backslash B$).
\medskip

The purpose of this paper is to show, with elementary calculations and in a self-contained manner, that \eqref{Ds:def} can be used to study boundary value problems. In particular, we show the equivalence between \eqref{Ds:def} and \eqref{notions}, \eqref{notions2} in suitable spaces, we provide an appropriate variational framework, and we prove that \eqref{u:def} is in fact a pointwise $s$-harmonic function using \eqref{Ds:def}.  Furthermore, we also include an asymptotic analysis of the operator $L_{m,s}$ as $s$ approaches $m$ from below, which we use to give an alternative (more elementary) proof of the fact that $L_{m,n}$ reduces to the polyharmonic operator for $n\in\N$.  For completeness, we also provide an elementary proof that the Fourier symbol of \eqref{Ds:def} is in fact $|\xi|^{2s}$, justifying the precise value of the normalizing constant \eqref{cNms:def}.  Our proofs are mainly based on pointwise calculations and known identities for Laplace transforms, Gamma functions, and combinatorial coefficients. 

\medskip

To present our results in a unified manner, we introduce some notation. For $n\in \N_0$, $\sigma\in(0,1]$, $s=n+\sigma$, and $U\subset \R^N$ open, we write $\cC^n(U)$ to denote the space of $n$-times continuously differentiable functions in $U$ and $C^s(U)$ to denote the space of functions in $\cC^n(U)$ whose derivatives of order $n$ are {locally} $\sigma$-H\"older continuous (or {locally} Lipschitz continuous if $\sigma=1$) in $U$.  Observe that $C^1(U)$ denotes the space of {locally} Lipschitz continuous functions, which is different from $\cC^1(U)$. We also use the norm
\begin{align*}
 \|u\|_{C^{2s+\beta}(U)}:=\|u\|_{\cC^n(U)}+\sup_{x,y\in U}\frac{|u(x)-u(y)|}{|x-y|^{\sigma}},
\end{align*}
where $\|u\|_{\cC^n(U)}$ is the usual supremum norm associated to $\cC^n(\overline{U})$.

\medskip

Our first result states that $L_{m,s}:C^{2s+\beta}(U)\cap \cL^1_s\to C^{\beta}(U)$. For sets $A,B\subset \R^N$, denote $A\subset\subset B$, if $\overline{A}$ is compact and contained in $B$.
\begin{lemma}\label{welldef:lemma}
Let $U\subset \R^N$ open, $V\subset\subset U$ open, $m\in\N$, $\beta\in(0,1)$, $s\in(0,m)$, and $u\in C^{2s+\beta}(U)\cap \cL^1_s$, then
\begin{align}\label{Holder:n}
\|L_{m,s} u\|_{C^{\beta}(V)}\leq C\|u\|_{C^{2s+\beta}(V)}\qquad \text{for some }C(N,m,s)=C>0.
\end{align}
In particular, $\|L_{m,s} u\|_{C^{\beta}(U)}\leq C\|u\|_{C^{2s+\beta}(U)}$, whenever $\|u\|_{C^{2s+\beta}(U)}$ is finite.
\end{lemma}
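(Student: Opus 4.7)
The plan rests on a single quantitative estimate, which I call the \emph{cancellation bound}: for $x\in V$ and $|y|\leq r_0:=\dist(V,\partial U)/(2m)$,
\begin{equation*}
|\delta_m u(x,y)|\ \leq\ C\|u\|_{C^{2s+\beta}(U)}|y|^{2s+\beta}.
\end{equation*}
This follows from the binomial identity $\sum_{k=-m}^{m}(-1)^k\binom{2m}{m-k}k^j=0$ for $j=0,1,\ldots,2m-1$ (the statement that $\delta_m$ annihilates polynomials of degree $<2m$), combined with a Taylor expansion of $u$ about $x$ of order just below $2m$: the polynomial part cancels, and the H\"older-type remainder yields the prescribed rate $|y|^{2s+\beta}$.

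With this workhorse in hand, the pointwise bound on $L_{m,s}u$ is obtained by splitting the integral at $|y|=r_0$. On $\{|y|\leq r_0\}$ the cancellation estimate controls the integrand by $C\|u\|_{C^{2s+\beta}(U)}|y|^{\beta-N}$, locally integrable since $\beta>0$. On $\{|y|>r_0\}$ I use $|\delta_m u(x,y)|\leq\sum_k\binom{2m}{m-k}|u(x+ky)|$ together with the change of variables $y\mapsto ky$ to reduce each summand to a multiple of $\|u\|_{\cL^1_s}$, with constants depending only on $r_0$ and $V$.

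For the H\"older estimate let $x_1,x_2\in V$ with $|h|:=|x_1-x_2|\leq r_0/4$, and split the integral of the difference $\delta_m u(x_1,y)-\delta_m u(x_2,y)$ at the radii $|y|=|h|$ and $|y|=r_0$. On $\{|y|<|h|\}$ applying the cancellation bound to each of $\delta_m u(x_1,y),\delta_m u(x_2,y)$ and integrating $|y|^{\beta-N}$ contributes $C|h|^\beta$. On $\{|h|\leq|y|\leq r_0\}$ I invoke the mean value theorem in the first variable, $\delta_m u(x_1,y)-\delta_m u(x_2,y)=\int_0^1 h\cdot\delta_m(\nabla u)(x_2+th,y)\,dt$; since $\nabla u\in C^{2s+\beta-1}(U)$ and the cancellation coefficients are unchanged, the same argument yields $|\delta_m(\nabla u)(w,y)|\leq C\|u\|_{C^{2s+\beta}(U)}|y|^{2s+\beta-1}$, and integrating $|h|\,|y|^{\beta-1-N}$ over the annulus again produces $C|h|^\beta$. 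On $\{|y|>r_0\}$ the $\cL^1_s$-type bound combined with the local H\"older regularity of $u$ in a slightly larger neighborhood of $V$ absorbs the translation by $h$ and yields $C|h|^\beta$ once more.

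The main obstacle is proving the cancellation estimate uniformly across all admissible pairs $(s,\beta)$: the order of the Taylor polynomial must be chosen carefully according to the integer part of $2s+\beta$, and one has to track which derivative of $u$ actually supplies the H\"older modulus. In addition, when $2s+\beta<1$ the mean-value step in the middle annulus cannot be used as stated; it must be replaced by a direct appeal to the $(2s+\beta)$-H\"older continuity of $u$, after which integrating $|h|^{2s+\beta}|y|^{-N-2s}$ over $|y|\geq|h|$ still delivers $C|h|^\beta$. Once these case distinctions are organized, the three-region decomposition is entirely mechanical, with no logarithmic borderline thanks to the strict positivity of $\beta$.
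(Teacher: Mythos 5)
Your core mechanism is the same as the paper's: expand $\delta_m u(x,\cdot)$ by Taylor's theorem, use that $\delta_m$ annihilates polynomials of degree $\le 2m-1$ (the paper's Lemmas \ref{calc:lemma} and \ref{mte:lemma}), and split the integral at radii comparable to $|x_1-x_2|$; your three-region split and the mean-value step on the middle annulus are minor variants of the paper's two-region argument. There is, however, a concrete failure point: the ``cancellation bound'' $|\delta_m u(x,y)|\le C\|u\|_{C^{2s+\beta}(U)}|y|^{2s+\beta}$, on which you say the whole plan rests, is false in the admissible corner $2s+\beta>2m$ (e.g.\ $s=m-\tfrac14$, $\beta=\tfrac34$). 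There the Taylor order dictated by the integer part of $2s+\beta$ is $2m$, and the degree-$2m$ part is \emph{not} annihilated: by Lemma \ref{calc:lemma}, $\sum_{k=-m}^m(-1)^k\binom{2m}{m-k}k^{2m}=(-1)^m(2m)!\neq 0$, so generically $\delta_m u(x,y)\sim |y|^{2m}\gg|y|^{2s+\beta}$ as $y\to 0$. The lemma itself survives, but in that regime you must bound $|\delta_m u(x_i,y)|\lesssim \|u\|\,|y|^{2m}$ for the size, and recover the missing factor $|h|^{2s+\beta-2m}$ by applying the H\"older modulus of the order-$2m$ derivatives to the \emph{difference} $\delta_m u(x_1,y)-\delta_m u(x_2,y)$ (Lemma \ref{mte:lemma} with $j=m$). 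To be fair, the paper's written proof also stops at $2\sigma+\beta<2$ (its assumption $\gamma\in(0,1)$) and does not treat this corner either.

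The second weak spot is the far field $|y|>r_0$ in the H\"older estimate, which you assert rather than prove, with a mechanism that cannot work: for the terms $u(x_1+ky)-u(x_2+ky)$, $k\neq0$, the arguments lie far outside $U$, so ``local H\"older regularity of $u$ near $V$'' gives nothing there, and the $\cL^1_s$ bound alone produces no factor $|h|^\beta$. The correct device is to change variables $z=x_i+ky$ and put the increment on the kernel, using $\bigl||z-x_1|^{-N-2s}-|z-x_2|^{-N-2s}\bigr|\le C|h|\,|z-x_1|^{-N-2s-1}$ away from $V$ together with the thin symmetric difference of the two integration domains; this yields a bound $C(V,U,N,m,s)\,|h|\,\|u\|_{\cL^1_s}$ plus a local term. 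Note this unavoidably places $\|u\|_{\cL^1_s}$ and a dependence on $\dist(V,\partial U)$ in the constant, so what your route proves is $\|L_{m,s}u\|_{C^\beta(V)}\le C\bigl(\|u\|_{C^{2s+\beta}(U)}+\|u\|_{\cL^1_s}\bigr)$ rather than the literal \eqref{Holder:n}. That is not a defect of your approach: \eqref{Holder:n} as stated cannot hold (take $u\equiv0$ near $V$ and nontrivial far away, so the right-hand side vanishes while $L_{m,s}u$ is a nonconstant function on $V$), and the paper's own far-field estimate \eqref{est2} avoids the $\cL^1_s$ term only by invoking the Taylor representation \eqref{dec1} for $|y|$ outside the range where it is valid. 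So fix the corner case and replace the hand-waving in the far field by the kernel-difference argument; the rest of your outline is sound.
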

The proof is based on a higher-order extension of \cite[Proposition 2.5]{S07} using a multivariate Taylor expansion and combinatorial identities.

Our next result relates the operator $L_{m,s}$ with the pointwise notion $(ii)$ given above. Here $W^{k,1}_{loc}(\R^N)$ is the usual Sobolev space of $k$-weakly differentiable locally integrable functions.
\begin{thm}\label{new:main:thm}
Let $m,n\in \N_0$, $n<m$, $\sigma\in(0,1)$, $s=n+\sigma$, $U\subset\R^N$ be an open set, and $u\in C^{2s+\beta}(U)\cap W^{2n,1}_{loc}(\R^N)$ such that $(-\Delta)^n u\in \cL^1_\sigma$, $(-\Delta)^iu\in\cL^1_{s-i-\frac{1}{2}}$, and $|\nabla (-\Delta)^i u|\in\cL^1_{s-i-1}$ for $i\in\{0,\ldots n-1\}$. Then 
 \begin{align}\label{new:claim}
L_{m,s} u = (-\Delta)^\sigma (-\Delta)^n u\qquad \text{ in }U. 
 \end{align}
 In particular, \eqref{new:claim} holds in $U=\R^N$ for all $u\in \cC^{\infty}_c(\R^N)$. 
\end{thm}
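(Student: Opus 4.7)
My plan is to reduce $L_{m,s}u$ to $(-\Delta)^\sigma((-\Delta)^n u)$ by moving $n$ copies of the Laplacian from the singular kernel $|y|^{-N-2s}$ onto $u$ through integration by parts in the $y$-variable. Iterating $\Delta_y |y|^{\alpha}=\alpha(\alpha+N-2)|y|^{\alpha-2}$ yields
\[
|y|^{-N-2s}=C_{N,n,s}^{-1}\,\Delta_y^n\,|y|^{2n-N-2s},\qquad C_{N,n,s}:=\prod_{l=1}^n(2l-N-2s)(2l-2-2s),
\]
so $n$ applications of Green's identity on $\{\varepsilon<|y|<R\}$, followed by $\varepsilon\to 0$ and $R\to\infty$, should reduce $L_{m,s}u(x)$ (up to the factor $c_{N,m,s}/(2C_{N,n,s})$) to $\int_{\R^N}\Delta_y^n\delta_m u(x,y)\,|y|^{2n-N-2s}\,dy$.

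The vanishing of the boundary contributions is the technical heart. Near $y=0$, a Taylor expansion combined with the combinatorial identity $\sum_{k=-m}^m(-1)^k\binom{2m}{m-k}k^j=0$ for $0\le j<2m$ (expressing that $\delta_m$ annihilates polynomials of degree below $2m$) gives $\Delta_y^j\delta_m u(x,y)=O(|y|^{2(m-j)})$ as $y\to 0$, killing every $\partial B_\varepsilon$-contribution. On $\partial B_R$ one obtains surface integrals of $(-\Delta)^i u(x+ky)$ and $\nabla(-\Delta)^i u(x+ky)$ weighted by $R^{2(n-i)-N-2s-1}\cdot R^{N-1}$ for $i=0,\dots,n-1$; writing the $\cL^1_{s-i-\frac{1}{2}}$- and $\cL^1_{s-i-1}$-norms in polar coordinates shows that the hypothesis data are precisely integrable against these powers of $R$, so a standard Fubini-plus-$\liminf$ argument produces a sequence $R_j\to\infty$ along which all outer boundary terms vanish.

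Once the boundary contributions are handled, direct differentiation gives $\Delta_y^n\delta_m u(x,y)=(-1)^n\sum_k(-1)^k\binom{2m}{m-k}k^{2n}v(x+ky)$ with $v:=(-\Delta)^n u$. Setting $c_k:=(-1)^k\binom{2m}{m-k}k^{2n}$, one has $c_{-k}=c_k$, $c_0=0$ (as $n\ge 1$), and $\sum_{k=-m}^m c_k=0$ by the same combinatorial identity (since $2n<2m$); this permits the insertion of a vanishing $2v(x)$-term to rewrite
\[
\Delta_y^n\delta_m u(x,y)=(-1)^{n+1}\sum_{k=1}^m c_k\,\delta_1 v(x,ky).
\]
The rescaling $z=ky$, the identity \eqref{frac-lapl}, and the hypothesis $v\in\cL^1_\sigma$ then evaluate each summand as $k^{2\sigma}\,\tfrac{2}{c_{N,1,\sigma}}(-\Delta)^\sigma v(x)$.

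Assembling the scalars yields $L_{m,s}u(x)=\kappa\,(-\Delta)^\sigma((-\Delta)^n u)(x)$, where $\kappa$ is a product of $c_{N,m,s}$, $c_{N,1,\sigma}^{-1}$, $C_{N,n,s}^{-1}$, and the sum $S_m:=\sum_{k=1}^m(-1)^k\binom{2m}{m-k}k^{2s}$ (which cancels with the reciprocal $S_m$ built into $c_{N,m,s}$). Using the definitions in \eqref{cNms:def} together with the Gamma-function recursions relating $\Gamma(\tfrac{N}{2}+s)$ to $\Gamma(\tfrac{N}{2}+\sigma)$ and $\Gamma(-s)$ to $\Gamma(-\sigma)$ (which account precisely for the factor $C_{N,n,s}$), one verifies $\kappa=1$; this is consistent with Theorem~\ref{main:thm}, both sides carrying Fourier symbol $|\xi|^{2s}$. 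I expect the main obstacle to be the rigorous justification of the outer boundary vanishing, since the $\cL^1$-hypotheses give only averaged rather than pointwise information; the remaining algebraic and combinatorial steps are routine. The case $u\in \cC^\infty_c(\R^N)$ is immediate, since all growth conditions are trivially met and the integration by parts is carried out on a compact annulus.
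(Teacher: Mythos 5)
Your proposal follows essentially the same route as the paper's proof: the paper likewise rewrites the kernel as $|y|^{-N-2s}=\mathrm{const}\cdot(-\Delta)^n|y|^{-N-2\sigma}$ (computing the constant via Gamma recursions, which is exactly your $C_{N,n,s}$ and $\kappa=1$ bookkeeping), integrates by parts with the outer boundary terms vanishing along a sequence $r_j\to\infty$ furnished by the $\cL^1_{s-i-\frac12}$ and $\cL^1_{s-i-1}$ hypotheses, passes $(-\Delta)^n$ onto $u$ inside $\delta_m$, re-centers with the identity \eqref{calc}, and rescales $ky\mapsto y$ so that the sum $\sum_{k}(-1)^k\binom{2m}{m-k}k^{2s}$ cancels against the normalization in \eqref{cNms:def}, leaving $(-\Delta)^\sigma(-\Delta)^n u$. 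One small correction: your claimed inner-boundary decay $\Delta_y^j\delta_m u(x,y)=O(|y|^{2(m-j)})$ would require $u\in\cC^{2m}$ near $x$, which is not assumed; under $u\in C^{2s+\beta}(U)$ the Taylor/annihilation argument only gives $O(|y|^{2s+\beta-2j})$, but this still makes each $\partial B_\varepsilon$-term of order $\varepsilon^{\beta}$, so the conclusion stands.
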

We remark that, for functions in $\cC^\infty_c(\R^N)$, one can freely interchange derivatives and the fractional Laplacian $(-\Delta)^\sigma$, see Proposition \ref{interchange} below, therefore Theorem \ref{new:main:thm} implies that $L_{m,s}u$ is equivalent to \eqref{notions} and \eqref{notions2} for  $u\in \cC^{\infty}_c(\R^N)$.  We also note that the assumptions of Theorem \ref{new:main:thm} are satisfied by the \emph{fundamental solution} $F_{N,s}$ of $(-\Delta)^s$ in $\R^N$, see \cite[Chapter 5 Lemma 25.2]{SKM93} or \cite[Section 5]{AJS16} for the exact formula of $F_{N,s}$.  Furthermore, observe that $m\in\N$ can be arbitrarily large in \eqref{new:claim} and only the restriction $s<m$ is relevant.  In fact, we have the following result.
\begin{lemma}\label{mton}
 Let $U\subset \R^N$, $s>0$, $\beta>0$, and $u\in C^{2s+\beta}(U)\cap \cL^1_{s}$. Then $L_{m,s} u = L_{n,s} u$ in $U$ for all $n,m\in\N$ such that $m>n>s$.
\end{lemma}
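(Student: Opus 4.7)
The plan is to fix an arbitrary $x_0\in U$ and decompose $u=u_1+u_2$, with $u_1$ compactly supported near $x_0$ (to be handled by mollification plus the Fourier symbol) and $u_2$ vanishing in a neighbourhood of $x_0$ (to be handled by a direct computation). Concretely, take $r>0$ with $\overline{B_{2r}(x_0)}\subset U$ and $\chi\in\cC^\infty_c(B_{2r}(x_0))$ with $\chi\equiv 1$ on $B_r(x_0)$, and set $u_1:=u\chi$, $u_2:=u(1-\chi)$. Both $u_1,u_2$ lie in $C^{2s+\beta}\cap\cL^1_s$ (the first with compact support, the second vanishing on $B_r(x_0)$), so by linearity it is enough to prove $L_{m,s}u_i(x_0)=L_{n,s}u_i(x_0)$ for $i=1,2$.

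For $u_1$, I would mollify to obtain $u_1^\eps:=u_1*\rho_\eps\in\cC^\infty_c(\R^N)\subset\cS(\R^N)$, with $\|u_1^\eps\|_{C^{2s+\beta}(\R^N)}$ uniformly bounded in $\eps$ and supports sitting in a fixed compact set. Both $L_{m,s}$ and $L_{n,s}$ carry the Fourier symbol $|\xi|^{2s}$ on Schwartz functions (see \cite[Ch.~5]{SKM93} or Theorem \ref{main:thm}), so $L_{m,s}u_1^\eps=L_{n,s}u_1^\eps$ pointwise. A dominated convergence argument on the defining integral at $x_0$ --- using the Taylor bound $|\delta_m u_1^\eps(x_0,y)|\le C|y|^{2s+\beta}$ near $y=0$ (which follows from the uniform $C^{2s+\beta}$-bound plus the fact that $\delta_m$ annihilates polynomials of degree $<2m$) together with a uniform compact-support bound for large $|y|$ --- passes the identity to the limit, giving $L_{m,s}u_1(x_0)=L_{n,s}u_1(x_0)$.

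For $u_2$, the vanishing condition gives $\delta_m u_2(x_0,y)=0$ whenever $|y|\le r/m$, and on $\{|y|>r/m\}$ each of the finitely many terms in $\delta_m u_2(x_0,y)=\sum_{k=-m}^m(-1)^k\binom{2m}{m-k}u_2(x_0+ky)$ is separately integrable against $|y|^{-N-2s}$ by $u_2\in\cL^1_s$. For $k\neq 0$ the substitution $z=x_0+ky$ and the enlargement of the radius from $r|k|/m$ to $r$ (legal since $u_2\equiv 0$ on $B_r(x_0)$) yield
\[
\int_{|y|>r/m}\frac{u_2(x_0+ky)}{|y|^{N+2s}}\,dy = |k|^{2s}\,I,\qquad I:=\int_{|z-x_0|>r}\frac{u_2(z)}{|z-x_0|^{N+2s}}\,dz,
\]
while the $k=0$ term vanishes as $u_2(x_0)=0$. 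Pairing $k$ with $-k$ via $\binom{2m}{m-k}=\binom{2m}{m+k}$ leaves
\[
L_{m,s}u_2(x_0) = c_{N,m,s}\,I\sum_{k=1}^{m}(-1)^k\binom{2m}{m-k}k^{2s}.
\]
For $s\notin\N$, formula \eqref{cNms:def} forces this product to equal $4^s\Gamma(\tfrac{N}{2}+s)I/(\pi^{N/2}\Gamma(-s))$, independent of $m$. For $s\in\{1,\dots,m-1\}$, the combinatorial sum itself vanishes (the $2m$-th order central difference annihilates polynomials of degree $<2m$), so $L_{m,s}u_2(x_0)=0$. In either case the value is $m$-independent for every $m>s$, delivering $L_{m,s}u_2(x_0)=L_{n,s}u_2(x_0)$ and completing the proof. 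The main subtlety to track is precisely the case split in \eqref{cNms:def} between $s\in\N$ and $s\notin\N$; it is reconciled by the observation that the integer branch is exactly the one where the combinatorial sum degenerates to zero, so no delicate limit of the normalising constant is required.
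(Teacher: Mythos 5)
Your argument is correct, but it takes a genuinely different (and heavier) route than the paper's. The paper proves the lemma in a few lines by a pure rescaling identity at a fixed point $x$: setting $P_a:=\sum_{k=-a}^{a}(-1)^k\binom{2a}{a-k}|k|^{2s}$, one writes $P_n\int_{\R^N}\delta_m u(x,y)\,|y|^{-N-2s}dy$ as $\sum_{j=-n}^{n}(-1)^j\binom{2n}{n-j}\int_{\R^N}\delta_m u(x,jy)\,|y|^{-N-2s}dy$ (change of variables $y\mapsto jy$), expands the double sum and regroups it as $\delta_n u(x,ky)$, obtaining $P_n\int\delta_m u(x,y)|y|^{-N-2s}dy=P_m\int\delta_n u(x,y)|y|^{-N-2s}dy$; since \eqref{cNms:def} gives $c_{N,m,s}P_m=c_{N,n,s}P_n$, the conclusion follows directly for every $u\in C^{2s+\beta}(U)\cap\cL^1_s$, with no cutoff, no mollification, and no Fourier transform. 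You instead localize ($u=u\chi+u(1-\chi)$), handle the compactly supported piece by mollification, Theorem \ref{main:thm} and dominated convergence, and the far piece by the same scaling computation the paper uses, applied termwise; this is legitimate (Section \ref{n:c:sec} does not rely on Lemma \ref{mton}, so there is no circularity, and your uniform bound $|\delta_m u_1^\eps(x_0,y)|\le C|y|^{2s+\beta}$ is exactly the estimate established in the proof of Lemma \ref{welldef:lemma}, using $2s+\beta<2m$, Lemma \ref{calc:lemma} for even degrees and the symmetry $\binom{2m}{m-k}=\binom{2m}{m+k}$ for odd ones). What the paper's route buys is brevity and complete elementarity; what yours buys is transparency about the two branches of \eqref{cNms:def}: your remark that $\sum_{k=1}^{m}(-1)^k\binom{2m}{m-k}k^{2s}$ vanishes for $s\in\N$ is precisely the point where the paper's identity $c_{N,m,s}P_m=c_{N,n,s}P_n$ degenerates to $0=0$, so your proof handles the integer case explicitly, at the cost of invoking the Fourier symbol theorem. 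Two small bookkeeping items to add if you write this up: reduce to $\beta\in(0,1)$ at the outset, and upgrade the a.e.\ identity $L_{m,s}u_1^\eps=L_{n,s}u_1^\eps$ coming from the Fourier transform to a pointwise one via the continuity provided by Lemma \ref{welldef:lemma}.
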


As a consequence of Theorem \ref{new:main:thm} and Lemma \ref{mton}, we have the following \emph{pointwise} equivalences. Let $\cH^s_0(U):=\{u\in H^s(\R^N)\::\: u=0\text{ in }\R^N\backslash U\}$, where $H^s(\R^N)$ is the usual Sobolev space for $s>0$.

\begin{cor}\label{e:d:cor}
 Let $n,m\in \N_0$, $n<m$, $\beta,\sigma\in(0,1)$, $s=n+\sigma$, $U\subset \R^N$ open bounded Lipschitz domain, and $u\in C^{2s+\beta}(U)$.
 \begin{enumerate}
  \item[(a)] If $u\in \cL^1_{\sigma}$, then $L_{m,s} u=(-\Delta)^n(-\Delta)^\sigma u$ in $U$.
  \item[(b)] If $n$ is even and $u\in \cH^s_0(U)$, then $L_{m,s} u=(-\Delta)^\frac{n}{2}(-\Delta)^\sigma(-\Delta)^\frac{n}{2} u$ in $U$.
  \item[(c)] If $n$ is odd and $u\in \cH^s_0(U)$, then 
   $L_{m,s} u=\sum_{i=1}^N (-\Delta)^\frac{n-1}{2}(\partial_i(-\Delta)^\sigma(\partial_i(-\Delta)^\frac{n-1}{2} u))$ in $U$.
   \end{enumerate}
\end{cor}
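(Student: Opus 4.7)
The corollary packages together Theorem \ref{new:main:thm}, Lemma \ref{mton}, and the commutation identities collected in Proposition \ref{interchange}. As a preliminary reduction I would invoke Lemma \ref{mton} to assume without loss of generality that $m=n+1$ in all three parts: this is legitimate because $u\in\cL^1_s$ in each case (in (a) from $\cL^1_\sigma\subset\cL^1_s$, since $\sigma<s$; in (b) and (c) from $\cH^s_0(U)\subset L^2(\R^N)$ combined with $U$ bounded, which forces bounded support in $\overline U$).

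For part (a), I would first verify that the hypotheses of Theorem \ref{new:main:thm} are met. The local requirements $u\in C^{2s+\beta}(U)\cap W^{2n,1}_{loc}(\R^N)$ are immediate from $u\in C^{2s+\beta}(U)$, and the global integrability conditions on $(-\Delta)^i u$ and $|\nabla(-\Delta)^iu|$ for $i=0,\ldots,n$ follow from combining the local $C^{2s+\beta}$-regularity on $U$ with the decay provided by $u\in\cL^1_\sigma$ (outside $U$, the iterated Laplacians are controlled by the $\cL^1_\sigma$-norm of $u$ since $U$ is bounded and the weight $1+|x|^{N+2\sigma}$ dominates the weights $1+|x|^{N+2(s-i-1/2)}$). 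Theorem \ref{new:main:thm} then yields $L_{m,s}u=(-\Delta)^\sigma(-\Delta)^n u$ in $U$. Finally, Proposition \ref{interchange} permits the swap $(-\Delta)^\sigma(-\Delta)^n u=(-\Delta)^n(-\Delta)^\sigma u$, completing (a).

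For parts (b) and (c), the hypothesis $u\in\cH^s_0(U)\cap C^{2s+\beta}(U)$ means that $u$ vanishes outside $\overline U$ (in the Sobolev trace sense) and is $(2s+\beta)$-regular inside; this is enough to verify the integrability hypotheses of Theorem \ref{new:main:thm} and conclude $L_{m,s}u=(-\Delta)^\sigma(-\Delta)^nu$ in $U$. In (b), where $n$ is even, I would factor $(-\Delta)^n=(-\Delta)^{n/2}(-\Delta)^{n/2}$ as a composition of classical differential operators and then commute $(-\Delta)^\sigma$ with the inner factor via Proposition \ref{interchange}, ending with the sandwiched form $(-\Delta)^{n/2}(-\Delta)^\sigma(-\Delta)^{n/2}u$. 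In (c), where $n$ is odd, I would instead split one copy of $-\Delta$ using $-\Delta=\sum_{i=1}^N(-\partial_i^2)$, writing
\begin{equation*}
(-\Delta)^n \;=\; (-\Delta)^{(n-1)/2}\sum_{i=1}^N(-\partial_i^2)(-\Delta)^{(n-1)/2},
\end{equation*}
and then commuting one factor $\partial_i(-\Delta)^{(n-1)/2}$ through $(-\Delta)^\sigma$ onto each side by means of the identity $\partial_i(-\Delta)^\sigma=(-\Delta)^\sigma\partial_i$ from Proposition \ref{interchange}.

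\textbf{Main obstacle.} The delicate step is justifying the commutation identities of Proposition \ref{interchange} outside the $\cC^\infty_c(\R^N)$ setting where they are most naturally stated. One must ensure that the intermediate expressions $(-\Delta)^{n/2}u$, $\partial_i(-\Delta)^{(n-1)/2}u$, and $(-\Delta)^\sigma u$ all have enough integrability for the pointwise fractional Laplacian to act on them and that each commutation is actually legal at the level of regularity present. In (b) and (c), the main subtlety is the possible low regularity of $u$ across $\partial U$, where $u$ vanishes only in an $H^s$ sense; this is precisely what the $\cH^s_0(U)$ hypothesis is designed to handle, but the verification requires writing each intermediate operator carefully as a convolution against the appropriate kernel and showing that Proposition \ref{interchange} applies in the relevant function class.
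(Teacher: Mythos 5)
There is a genuine gap: you apply Theorem \ref{new:main:thm} directly to $u$, but under the hypotheses of the corollary $u$ does not satisfy the assumptions of that theorem. Theorem \ref{new:main:thm} requires $u\in W^{2n,1}_{loc}(\R^N)$ together with the \emph{global} weighted integrability conditions $(-\Delta)^n u\in\cL^1_\sigma$, $(-\Delta)^i u\in\cL^1_{s-i-\frac12}$, $|\nabla(-\Delta)^i u|\in\cL^1_{s-i-1}$. In part (a) the function $u$ is only assumed to lie in $C^{2s+\beta}(U)\cap\cL^1_\sigma$: outside the bounded set $U$ it need not possess any derivatives at all (this is exactly the situation of the example \eqref{u:def} discussed in the introduction, where $(-\Delta)^n u$ fails to exist in $\R^N\setminus B$), so your claim that the iterated Laplacians are ``controlled by the $\cL^1_\sigma$-norm of $u$'' outside $U$ is unfounded — those quantities may simply not be defined. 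Similarly, in (b) and (c), $u\in\cH^s_0(U)$ gives only $H^s$-regularity across $\partial U$, not $W^{2n,1}_{loc}$-regularity, so again Theorem \ref{new:main:thm} cannot be invoked for $u$ itself; and the subsequent commutations via Proposition \ref{interchange} (which itself needs $C^3$ regularity and $\cL^1_\sigma$ control of the function and its derivative) are likewise not legal at the level of regularity available. You flag this as the ``main obstacle'' but do not resolve it, and resolving it is the whole content of the corollary: the point of $L_{m,s}$ is precisely that it applies to functions with no smoothness outside $U$, where the compositions in \eqref{notions}--\eqref{notions2} taken in the order $(-\Delta)^\sigma(-\Delta)^n u$ need not make sense.

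The paper's proof avoids the issue by a duality argument in which the hypotheses of Theorem \ref{new:main:thm} are only ever checked on the test function. For $\varphi\in\cC^\infty_c(U)$ one writes, using Lemma \ref{ibyp} and then Theorem \ref{new:main:thm} applied to $\varphi$,
\begin{align*}
\int_{\R^N} L_{m,s}u\,\varphi\ dx=\int_{\R^N} u\,L_{m,s}\varphi\ dx=\int_{\R^N} u\,(-\Delta)^\sigma(-\Delta)^n\varphi\ dx,
\end{align*}
then uses Proposition \ref{interchange} (on $\varphi$, where it is unproblematic) to rearrange the composition, and Lemma \ref{ibyp:ours} — which is tailored to $u\in C^{2s+\beta}(U)\cap\cL^1_\sigma$ in (a) and to $u\in\cH^s_0(U)$ in (b), (c) — to move the operator back onto $u$; the pointwise identity in $U$ then follows from the fundamental lemma of the calculus of variations, both sides being continuous in $U$ by Lemma \ref{welldef:lemma}. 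To repair your argument you would have to replace the direct application of Theorem \ref{new:main:thm} to $u$ by such a weak formulation (or add strong global regularity hypotheses, which would defeat the purpose of the statement). Your preliminary reduction via Lemma \ref{mton} is fine but also unnecessary in the paper's scheme, since $m$ only enters through $L_{m,s}\varphi$.
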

 The proof relies on the fundamental theorem of calculus of variations, Lemma \ref{ibyp:ours}, and the following analogue of integration by parts.
\begin{lemma}\label{ibyp}
 Let $m\in \N$, $\beta\in(0,1)$, $s\in(0,m)$, $U\subset\R^N$ open, and $u\in C^{2s+\beta}(U)\cap \cL^1_s$. Then 
\begin{align*}
 \int_{\R^N} u(x)L_{m,s} \varphi(x)\ dx=\int_{\R^N} L_{m,s}u(x) \varphi(x)\ dx\qquad \text{ for all }\varphi\in \cC^\infty_c(U).
\end{align*}
\end{lemma}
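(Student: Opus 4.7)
The plan is to substitute the definition \eqref{Ds:def} of $L_{m,s}$ into the left-hand side, use Fubini to interchange the two integrations, and then perform a translation $x\mapsto x-ky$ in each term of the finite-difference sum to recognize the right-hand side. A preliminary observation checks that both sides make sense globally: the integrand $\varphi\,L_{m,s}u$ is integrable because $\varphi\in\cC^\infty_c(U)$ and Lemma \ref{welldef:lemma} provides a local $C^\beta$-bound for $L_{m,s}u$ on $\supp\varphi$; the integrand $u\,L_{m,s}\varphi$ is integrable since one can show a decay bound $|L_{m,s}\varphi(x)|\le C|x|^{-N-2s}$ as $|x|\to\infty$ (the change of variables $z=x+ky$ localises the support of $\varphi$ and forces only $|y|\gtrsim|x|$ to contribute), after which $u\in\cL^1_s$ gives integrability.

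The core step is the Fubini justification, namely absolute convergence of
\[
 \int_{\R^N}\int_{\R^N}\frac{|u(x)|\,|\delta_m\varphi(x,y)|}{|y|^{N+2s}}\,dy\,dx.
\]
I would split the $y$-integration at $|y|=1$. Near $y=0$, since $\delta_m\varphi(x,\cdot)$ is a finite difference of order $2m$ of a smooth function, a Taylor expansion at $0$ yields $|\delta_m\varphi(x,y)|\le C\|\varphi\|_{\cC^{2m}}|y|^{2m}$; the excess factor $|y|^{2m-N-2s}$ is integrable because $m>s$, and the $x$-integration is confined to the bounded set $\supp\varphi+B_m$, hence finite by $u\in L^1_{loc}(\R^N)$. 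For $|y|\ge1$ I would expand $\delta_m\varphi$ as a finite sum and, in the $k$-th summand, shift $x\mapsto x-ky$ to pull the compact support of $\varphi$ into the outer variable; the resulting inner integral $\int_{|y|\ge1}|u(z-ky)|\,|y|^{-N-2s}\,dy$ splits (after the rescaling $w=ky$) into a far-field part absorbed by $u\in\cL^1_s$ and a bounded annular part absorbed by $u\in L^1_{loc}$.

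Granting Fubini, it suffices to show that for almost every $y$,
\[
 \int_{\R^N}u(x)\,\delta_m\varphi(x,y)\,dx=\int_{\R^N}\delta_m u(x,y)\,\varphi(x)\,dx.
\]
Expanding the left-hand side as $\sum_{k=-m}^m(-1)^k\binom{2m}{m-k}\int u(x)\varphi(x+ky)\,dx$, translating $x\mapsto x-ky$ in the $k$-th integral, and relabelling $k\mapsto -k$ using $\binom{2m}{m-k}=\binom{2m}{m+k}$ and $(-1)^{-k}=(-1)^k$ produces exactly $\sum_k(-1)^k\binom{2m}{m-k}\int u(x+ky)\varphi(x)\,dx=\int\delta_m u(x,y)\varphi(x)\,dx$. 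Reapplying Fubini (with essentially the same estimate, now using the local control of $L_{m,s}u$ from Lemma \ref{welldef:lemma} for $x\in\supp\varphi$) converts the resulting double integral into $\int L_{m,s}u(x)\,\varphi(x)\,dx$. The main obstacle is therefore the Fubini estimate; the condition $m>s$ built into the definition of $L_{m,s}$ is precisely what makes the double integral absolutely convergent, and no further regularity beyond $u\in C^{2s+\beta}(U)\cap\cL^1_s$ is needed.
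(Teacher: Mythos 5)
Your argument is correct and follows essentially the same route as the paper's proof, which is simply Fubini's theorem plus the translation $x\mapsto x+ky$ (and the symmetry $\binom{2m}{m-k}=\binom{2m}{m+k}$); your proposal merely spells out the absolute-convergence estimates that the paper leaves implicit. The only slight imprecision is that the second Fubini application rests on the pointwise bounds for $|\delta_m u(x,y)|$ established \emph{inside} the proof of Lemma \ref{welldef:lemma} (together with $u\in\cL^1_s$ for $|y|$ large), rather than on the statement of that lemma itself, but this is a cosmetic point and not a gap.
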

We also show that, if $s\in\N$, then $L_{m,s}$ is the usual polyharmonic operator. 
\begin{thm}\label{van:cor}
Let $m,n\in\N$ such that $n<m$, $U\subset \R^N$ open, and $u\in \cC^{2n}(U)\cap \cL^1_{n}(\R^N)$, then $L_{m,n}u =(-\Delta)^n u= (-\sum_{i=1}^N\partial_{ii})^n u$ in $U$. 
Moreover, for $\eta,\beta\in(0,1)$ and $x\in U$,
\begin{equation}\label{asymp}
\begin{aligned}
\lim_{s\to0^+}L_{m,s}u(x)&=u(x) \quad\quad\quad\quad\text{ for all $u\in C^{\beta}(U)\cap L^{\infty}(\R^N)$,}\\
\lim_{s\to m^-}L_{m,s}u(x)&=(-\Delta)^mu(x) \quad\text{ for all $u\in \cC^{2m}(U)\cap \cL^1_{m-\eta}$.}
\end{aligned}
\end{equation}
\end{thm}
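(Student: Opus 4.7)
The three assertions share a common strategy built on Taylor expansion near $x$ together with the combinatorial identities
\begin{equation*}
\sum_{k=-m}^{m}(-1)^k \binom{2m}{m-k} k^j = \begin{cases} 0, & 0 \le j < 2m, \\ (-1)^m (2m)!, & j = 2m, \end{cases}
\end{equation*}
which encode the annihilation of polynomials of degree less than $2m$ by $\delta_m$. The plan is to handle the two asymptotic limits first by splitting the hypersingular integral at a radius $\eps$ and computing the leading term using the asymptotics of $c_{N,m,s}$, then to derive the integer-exponent identity by a duality argument based on Theorem \ref{main:thm} and Lemma \ref{ibyp}.

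For the limit $s \to m^-$, Taylor expansion of $u \in \cC^{2m}$ and the identities above give $\delta_m u(x,y) = (-1)^m (y \cdot \nabla)^{2m} u(x) + o(|y|^{2m})$ as $y \to 0$. On $\{|y| > \eps\}$ the integrand is controlled uniformly in $s \in (m-\eta, m)$ by $u \in \cL^1_{m-\eta}$ while $c_{N,m,s} \to 0$, so this piece vanishes; on $\{|y| < \eps\}$, polar coordinates together with the classical spherical-average identity
\begin{equation*}
\int_{S^{N-1}} (\omega \cdot \nabla)^{2m} u(x)\,d\sigma(\omega) = \frac{2\pi^{N/2} (2m)!}{4^m m!\,\Gamma(N/2 + m)}\,\Delta^m u(x)
\end{equation*}
produce a radial factor $\eps^{2m-2s}/(2m - 2s) \sim 1/(m-s)$. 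Combined with the Laurent expansion $c_{N,m,s} \sim \frac{2 \cdot 4^m m!\,\Gamma(N/2 + m)}{\pi^{N/2}(2m)!}(m - s)$ derived from $1/\Gamma(-s) = (-1)^{m+1} m!(s - m) + O((s - m)^2)$ and the value $(-1)^m (2m)!/2$ of the normalising sum at $s = m$, the poles cancel exactly and the limit is $(-1)^m \Delta^m u(x) = (-\Delta)^m u(x)$.

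For $s \to 0^+$, the cancellation $\sum_k (-1)^k \binom{2m}{m-k} = 0$ permits the rewriting $\delta_m u(x, y) = \sum_{k \neq 0} (-1)^k \binom{2m}{m-k} [u(x + ky) - u(x)]$; the rescaling $z = ky$ in each summand, followed by the identification with the normalising constant, collapses $L_{m,s}$ on $(0, \beta/2)$ to the singular-integral form
\begin{equation*}
L_{m,s} u(x) = \frac{4^s \Gamma(N/2 + s)}{\pi^{N/2} \Gamma(-s)} \int_{\R^N} \frac{u(x + z) - u(x)}{|z|^{N + 2s}}\,dz.
\end{equation*}
Splitting at $|z| = 1$, the near-field is bounded by the $C^\beta$ hypothesis while the far-field isolates the singular term $-u(x) |S^{N-1}|/(2s)$. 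Using $1/\Gamma(-s) = -s + O(s^2)$ and $|S^{N-1}| = 2\pi^{N/2}/\Gamma(N/2)$, the singular contribution combines to produce $u(x)$ in the limit, the remaining pieces tending to zero under the decay implicit in the hypotheses.

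For the integer identity $L_{m,n} u = (-\Delta)^n u$, on test functions $\phi \in \cC^\infty_c(\R^N)$, Theorem \ref{main:thm} together with the injectivity of the Fourier transform gives $L_{m,n} \phi = (-\Delta)^n \phi$ pointwise. Then for $u \in \cC^{2n}(U) \cap \cL^1_n$ and arbitrary $\phi \in \cC^\infty_c(U)$, Lemma \ref{ibyp} (applied to $u$, if necessary after a mollification) combined with classical integration by parts yields $\int_U (L_{m,n} u - (-\Delta)^n u)\,\phi\,dx = 0$, so by the fundamental lemma of the calculus of variations the two sides coincide in $U$. The main obstacle is the precise tracking of the orders of vanishing of $c_{N,m,s}$ against the divergence of the hypersingular integral at the critical values $s = 0$ and $s = m$, which requires careful Laurent expansions of $\Gamma(-s)$ and of the normalising sum. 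A secondary delicate point is that the integrand of $L_{m,n} u$ is only $o(|y|^{-N})$ near the origin for $u \in \cC^{2n}$, so either a principal-value reading or a mollification of $u$ (with the first two steps controlling the passage to the limit) is needed to make sense of the pointwise operator.
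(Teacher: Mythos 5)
Your treatment of the limit $s\to m^-$ is correct and essentially identical to the paper's (Proposition \ref{lim:prom}): Taylor expansion together with the annihilation identities, the far field killed by $c_{N,m,s}\to 0$ combined with $u\in\cL^1_{m-\eta}$, and the near field computed against the asymptotics of Lemma \ref{a:lemma}; your spherical-average identity is just a repackaging of the paper's evaluation of $\int_{B_1}y^{2\alpha}|y|^{-N-2s}\,dy$, and your constants check out. Two other steps do not close, however. For $s\to0^+$, your rescaling of each summand is in substance the paper's Lemma \ref{mton} (reducing $L_{m,s}$ to $L_{1,s}$), but your final claim that the leftover far-field piece vanishes ``under the decay implicit in the hypotheses'' is unsupported: the hypothesis is only $u\in L^\infty(\R^N)$, which carries no decay, and the piece in question, $\frac{4^s\Gamma(N/2+s)}{\pi^{N/2}\Gamma(-s)}\int_{|z|>1}u(x+z)\,|z|^{-N-2s}\,dz$, is of order one for merely bounded $u$ (take $u$ vanishing near $x$ and equal to $1$ outside a large ball: the limit of $L_{m,s}u(x)$ is then $-1\neq u(x)$). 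The paper handles this limit by citing \cite[Proposition 4.4]{NPV11}, which is proved for compactly supported smooth functions, so part of the difficulty lies in the statement itself; still, as written your argument does not establish the claim for the stated class.

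For the integer identity $L_{m,n}u=(-\Delta)^n u$ you take a genuinely different route: duality, via Theorem \ref{main:thm} on test functions, Lemma \ref{ibyp}, classical integration by parts, and the fundamental lemma of the calculus of variations. The paper instead obtains it as a limit in $s$: by continuity of $s\mapsto c_{N,m,s}$ (Lemma \ref{a:lemma}), dominated convergence, Lemma \ref{mton}, and Proposition \ref{lim:prom}, one has $L_{m,n}u(x)=\lim_{s\to n^-}L_{m,s}u(x)=\lim_{s\to n^-}L_{n,s}u(x)=(-\Delta)^n u(x)$. Your route has a concrete gap at the given regularity: Lemma \ref{ibyp} requires $u\in C^{2n+\beta}(U)$, whereas the hypothesis is only $u\in\cC^{2n}(U)$, and the proposed fix ``after a mollification'' is exactly where the difficulty sits. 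Writing $u_\eps=u\ast\rho_\eps$, the classical side converges locally uniformly, but to pass to the limit in $L_{m,n}u_\eps(x)$ you need uniform integrable control of $\delta_m u_\eps(x,y)/|y|^{N+2n}$ near $y=0$; at $\cC^{2n}$ regularity the integrand is only $o(|y|^{-N})$ (as you note yourself), so neither the convergence $L_{m,n}u_\eps\to L_{m,n}u$ nor the local integrability and continuity of $L_{m,n}u$ needed to upgrade the distributional identity to a pointwise one in $U$ comes for free. To repair this you should either strengthen the hypothesis to $C^{2n+\beta}(U)$ (weakening the theorem) or replace the duality step by the paper's limiting argument in $s$, which avoids both Lemma \ref{ibyp} and mollification.
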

This result shows the consistency of the exact values of $c_{N,m,s}$. For a similar asymptotic study in the case $s\in(0,1)$, we refer to \cite[Proposition 4.4]{NPV11}. Observe also that $L_{m,n}$ is well defined in $\cC^{2n}(U)\cap \cL^1_{n}(\R^N)$, which is larger than $C^{2n+\beta}(U)\cap \cL^1_{n}(\R^N)$.  We remark that the representation of local operators as hypersingular integrals holds in much more generality, see \cite[Chapter 5, Section 26.6]{SKM93}, where different techniques from ours are used.

\medskip

We are ready to show that \eqref{u:def} gives rise to a \emph{pointwise} $s$-harmonic function with prescribed (nonlocal) boundary values. Here $H^s(U):=\{u\chi_U\::\: u\in H^s(\R^N)\}$.
\begin{thm}
 Let $s\in(0,\infty)\backslash \N$, $\beta\in(0,1)$, $\psi\in \cL^1_s$ with $\psi=0$ in $B_{r}(0)$ for some $r>1$, and let $u$ be given by \eqref{u:def}. Then $u\in C^\infty(B)\cap C^s(B_r(0))\cap H^s(B_{\rho}(0))\cap \cL^1_s$, $\rho\in(1,r)$ is the unique pointwise solution in $C^{2s+\beta}(B)\cap C^s(\overline{B})\cap H^s(B)$ of 
 \begin{align*}
(-\Delta)^s u(x)=0\qquad\text{ for all }x\in B\qquad \text{ and }\qquad u=\psi\quad \text{ in }\R^N\backslash B.  
 \end{align*}
 Here $(-\Delta)^s u:= L_{m,s} u$ for any $m\in\N$ with $m>s$.
\end{thm}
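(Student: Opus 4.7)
The plan is to split the proof into three self-contained tasks: the regularity of the explicit representation \eqref{u:def}, the pointwise identity $L_{m,s}u\equiv 0$ in $B$ (together with its independence of the choice of $m>s$, which is immediate from Lemma \ref{mton}), and uniqueness in the prescribed class.

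For the regularity: since $\psi$ is supported outside $\overline{B_r}$ with $r>1$, differentiating under the integral sign in \eqref{u:def} for $x\in B$ is unproblematic and yields $u\in\cC^\infty(B)$. On $B_r\setminus\overline{B}$ one has $u\equiv 0$, so the $C^s$-behaviour across $\partial B$ is governed entirely by the factor $(1-|x|^2)_+^s$, which vanishes with exactly this Hölder exponent on $\partial B$, giving $u\in C^s(B_r(0))$ in conjunction with the uniform control of the kernel $|x-y|^{-N}$ on $\supp\psi$. Boundedness of $u$ on $B_r(0)$ together with $u=\psi$ outside and $\psi\in\cL^1_s$ then implies $u\in\cL^1_s$, and a direct Gagliardo-seminorm estimate delivers $u\in H^s(B_\rho(0))$ for any $\rho\in(1,r)$.

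For the equation, since $u\in\cC^\infty(B)\cap\cL^1_s$, Lemma \ref{welldef:lemma} ensures $L_{m,s}u\in C(B)$, and it suffices to check the vanishing in the distributional sense. By \cite[Corollary 3.3]{AJS17}, $u$ satisfies the distributional identity \eqref{dist}, while for every $\varphi\in\cC^\infty_c(B)$ Theorem \ref{new:main:thm} identifies $L_{m,s}\varphi$ with each of the expressions in \eqref{notions}--\eqref{notions2}. Combining these with Lemma \ref{ibyp} yields
\begin{align*}
\int_{\R^N}L_{m,s}u(x)\,\varphi(x)\,dx=\int_{\R^N}u(x)\,L_{m,s}\varphi(x)\,dx=0
\end{align*}
for every $\varphi\in\cC^\infty_c(B)$, and the fundamental lemma of the calculus of variations combined with continuity of $L_{m,s}u$ forces $L_{m,s}u\equiv 0$ on $B$. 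For uniqueness I would take a second solution $v$ in the same class, set $w:=u-v$ (so that $w\equiv 0$ on $\R^N\setminus B$, $w\in C^s(\overline{B})\cap H^s(B)\cap\cL^1_s$, and $L_{m,s}w=0$ in $B$), approximate $w$ by a sequence $w_k\in\cC^\infty_c(B)$ in the $H^s$-norm, apply Lemma \ref{ibyp} to rewrite $\int L_{m,s}w\cdot w_k\,dx=\int w\cdot L_{m,s}w_k\,dx$, and pass to the limit via Plancherel and the Fourier symbol identification in Theorem \ref{main:thm} to conclude
\begin{align*}
0=\int_{\R^N}L_{m,s}w\cdot w\,dx=(2\pi)^{-N}\int_{\R^N}|\xi|^{2s}|\hat{w}(\xi)|^2\,d\xi,
\end{align*}
which forces $w\equiv 0$.

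The main obstacle I expect is the sharp $C^s$-regularity of $u$ up to $\partial B$: the interplay between the vanishing factor $(1-|x|^2)_+^s$ and the integral kernel $|x-y|^{-N}$ is delicate and ultimately rests on the careful boundary analysis already carried out in \cite{AJS17}. A secondary subtlety is justifying that the zero-extension of $w=u-v$ actually lies in $H^s(\R^N)$ when $s>1$, so that the Fourier computation above is legitimate; this should rely on the fact that both $u$ and $v$ share the same boundary datum $\psi$ which vanishes on $B_r\setminus\overline{B}$, forcing the difference $w$ to vanish on $\partial B$ to sufficient order.
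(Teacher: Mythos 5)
Your treatment of the equation itself is exactly the paper's route: $u$ is distributionally $s$-harmonic by \cite[Corollary 3.3]{AJS17} (identity \eqref{dist}), Theorem \ref{new:main:thm} identifies $L_{m,s}\varphi$ with $(-\Delta)^\sigma(-\Delta)^n\varphi$ on test functions, Lemma \ref{ibyp} transfers the operator onto $u$, and continuity of $L_{m,s}u$ from Lemma \ref{welldef:lemma} plus the fundamental lemma gives $L_{m,s}u\equiv0$ in $B$; independence of $m$ is Lemma \ref{mton}. The regularity statements are, both in your sketch and in the paper, ultimately delegated to \cite[Corollary 3.3]{AJS17}, so no objection there (only note that for $s>1$ the membership $u\in H^s(B_\rho)$ is not a single Gagliardo-seminorm estimate but requires the full Sobolev--Slobodeckij norm of the derivatives up to order $n$).

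The genuine gap is in your uniqueness argument. Your Plancherel computation needs the zero extension of $w=u-v$ to lie in $H^s(\R^N)$, i.e.\ $w\in\cH^s_0(B)$, together with density of $\cC^\infty_c(B)$ in that space; but this does not follow from the hypotheses $u,v\in C^{2s+\beta}(B)\cap C^s(\overline{B})\cap H^s(B)$ with the same exterior datum. Sharing the datum $\psi$ (which vanishes near $\partial B$) forces $w=0$ outside $B$ and, by continuity, $w=0$ on $\partial B$, but it does \emph{not} force the derivatives of $w$ up to order $n$ to vanish on $\partial B$: for instance, if $s=1+\sigma$ with $\sigma\in[\tfrac12,1)$, a competitor $v$ in the stated class may have nonvanishing normal derivative on $\partial B$, and then the zero extension of $w$ has a gradient jump across $\partial B$, so it is not in $H^{1+\sigma}(\R^N)$ and cannot be approximated by $\cC^\infty_c(B)$ functions in the $H^s(\R^N)$ norm. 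In other words, the ``sufficient order of vanishing'' you invoke is precisely what has to be proved, and it is the nontrivial content of the uniqueness statements in \cite{AJS17}. The paper closes this step differently: it shows (via Lemma \ref{ibyp} and Theorem \ref{new:main:thm}) that any \emph{pointwise} solution $v$ in the stated class satisfies the distributional identity \eqref{dist}, and then invokes the uniqueness of distributional solutions in that class established in \cite[Corollary 3.3, Theorems 1.1, 1.4, 1.5]{AJS17}, where the required boundary analysis is carried out. Either adopt that reduction, or supply an independent proof that solutions in the class necessarily belong to $\psi+\cH^s_0(B)$; as written, your Fourier argument does not go through.
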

The proof follows immediately from \cite[Corollary 3.3]{AJS17} (recall \eqref{dist} above), Theorem \ref{new:main:thm}, and Lemma~\ref{ibyp}.

We remark that an alternative definition for $L_{m,s}$ using the principal value integral can be obtained as follows. For $m\in \N$ and $u:\R^N\to \R$ let 
\begin{equation*}
\delta^{+}_mu(x,y):=\frac{1}{2}\binom{2m}{m}u(x)+\sum_{k=1}^\m (-1)^k { \binom{2m}{m-k}} u(x+ ky), \qquad x,y\in \R^N.
\end{equation*}
 Let $U\subset \R^N$ open, $s\in(0,m)$, and $\beta\in(0,1)$. For $u\in C^{2s+\beta}(U)\cap \cL^1_s$ and $x\in U$, we have that
 \begin{equation}\label{def-frac}
 L_{m,s} u(x):=c_{N,m,s}p.v.\int_{\R^N}\frac{\delta^{+}_{m}u(x,y)}{|y|^{N+2s}}\ dy=c_{N,m,s}\lim_{\epsilon\to0^+}\int_{B_{1/\epsilon}(0)\setminus B_{\epsilon}(0)}\frac{\delta^{+}_{m}u(x,y)}{|y|^{N+2s}}\ dy,
 \end{equation}
 with $c_{N,m,s}$ as in \eqref{cNms:def}. Observe that \eqref{def-frac} and \eqref{Ds:def} are equivalent in $\cL^1_s\cap C^{2s+\beta}(U)$ with $U\subset \R^N$ open, via a change of variables.

\medskip

To connect $L_{m,s}$ to an appropriate variational framework, we next study an equivalent \emph{scalar product} for $H^s(\R^N)$, $s>0$, using the difference operator $\delta_m$.  To state this result, we recall first the scalar product associated to $(-\Delta)^s$ as introduced in \cite{AJS16}. For $n\in\N$, $\sigma\in(0,1)$, $s=n+\sigma$, and $u,v\in H^{s}(\R^N)$, let 
\begin{align}
 \cE_{\sigma}(u,v)&:=
\frac{c_{N,1,\sigma}}{2}\int_{\R^N}\int_{\R^N}\frac{(u(x)-u(y))(v(x)-v(y))}{|x-y|^{N+2\sigma}}\ dx\ dy,\nonumber\\
\cE_{s}(u,v)&:=\left\{\begin{aligned}
&\cE_{\sigma}((-\Delta)^{\frac{n}{2}}  u,(-\Delta)^{\frac{n}{2}} v),&& \quad \text{if $n$ is even,}\\
&\sum_{k=1}^{N}\cE_{\sigma}(\partial_k (-\Delta)^{\frac{n-1}{2}} u,\partial_k (-\Delta)^{\frac{n-1}{2}} v),&&
\quad  \text{if $n$ is odd,}
\end{aligned}\right.\label{bilin:def}
\end{align}

In particular (see \cite[Proposition 3.1]{AJS16}, here $\cF$ stands for the Fourier transform),
\begin{align}\label{fourier}
\cE_{s}(u,v)=\int_{\R^N} |\xi|^{2s}\cF u(\xi)\cF v(\xi)\ d\xi\qquad \text{ for all }s>0.
\end{align}

If $U\subset \R^N$ is a bounded open Lipschitz set, the space $\cH^{s}_0(U)$ equipped with the norm
\begin{align*}
\|u\|_{\cH^s_0(U)}:=(\sum_{|\alpha|\leq m}\|\partial^{\alpha} u\|_{L^2(U)}^2+\cE_{s}(u,u))^{\frac{1}{2}} 
\end{align*}
is a Hilbert space and, using \eqref{bilin:def}, a consistent notion of weak solution can be defined; see \cite{AJS16}, where existence, regularity, and positivity of weak solutions to boundary value problems is studied. 

Then, an equivalent scalar product using the difference operators $\delta_m$ can be defined as follows. For $u,v\in H^{s}(\R^N)$ let $m\in\N$ such that $s\in(0,2m)$ and let
\begin{equation*}
\cE_{2m,s}(u,v):=\frac{c_{N,2m,s}}{2}\int_{\R^N}\int_{\R^N}\frac{\delta_{m}u(x,y) \delta_{m}v(x,y)}{|y|^{N+2s}}\ dxdy.
\end{equation*}
We have the following result.
\begin{thm}\label{bilinear}
Let $s>0$, $n,m\in \N$, and $s<n\leq 2m$. Then $\cE_{2m,s}(u,v)=\cE_s(u,v)$ for $u,v\in H^s(\R^N)$ and $\int_{\R^N}L_{n,s}u(x)\ v(x)\ dx=\cE_{2m,s}(u,v)$ for $u,v\in C^\infty_c(\R^N)$.
\end{thm}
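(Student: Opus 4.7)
My plan is to prove the two equalities separately: the first on $H^s(\R^N)$ via the Fourier transform, and the second on $C_c^\infty(\R^N)$ via a combinatorial identity combined with the integration-by-parts formula of Lemma \ref{ibyp}.

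For the identity $\cE_{2m,s}(u,v)=\cE_s(u,v)$, I first compute the partial Fourier transform of $\delta_m u(\cdot,y)$ at fixed $y$. Expanding $(e^{iy\cdot\xi/2}-e^{-iy\cdot\xi/2})^{2m}=(2i\sin(y\cdot\xi/2))^{2m}$ via the binomial theorem yields
\begin{equation*}
\sum_{k=-m}^{m}(-1)^k\binom{2m}{m-k}e^{ik\,y\cdot\xi}=\bigl(2\sin(y\cdot\xi/2)\bigr)^{2m},
\end{equation*}
so that $\cF[\delta_m u(\cdot,y)](\xi)=(2\sin(y\cdot\xi/2))^{2m}\,\cF u(\xi)$. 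For Schwartz $u,v$, applying Plancherel in $x$ and Fubini in $y$ gives, up to the Plancherel normalization,
\begin{equation*}
\cE_{2m,s}(u,v)=\frac{c_{N,2m,s}}{2}\int_{\R^N}\cF u(\xi)\,\cF v(\xi)\left(\int_{\R^N}\frac{(2\sin(y\cdot\xi/2))^{4m}}{|y|^{N+2s}}\,dy\right)d\xi.
\end{equation*}
The inner integral is precisely the one producing the Fourier symbol of $L_{2m,s}$, so by Theorem \ref{main:thm} (applicable since $s<n\leq 2m$) it equals $2|\xi|^{2s}/c_{N,2m,s}$. The constants telescope, and invoking \eqref{fourier} one concludes $\cE_{2m,s}(u,v)=\cE_s(u,v)$ on $\cS(\R^N)$; the identity extends to $H^s(\R^N)$ by density, since the Fourier representation just obtained shows $\cE_{2m,s}$ is a bounded bilinear form on $H^s(\R^N)$.

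For the identity $\int_{\R^N}L_{n,s}u\,v\,dx=\cE_{2m,s}(u,v)$ on $C_c^\infty(\R^N)$, I would argue directly. Expanding the product and translating the variable of integration gives
\begin{equation*}
\int_{\R^N}\delta_m u(x,y)\delta_m v(x,y)\,dx=\sum_{j,k=-m}^{m}(-1)^{j+k}\binom{2m}{m-j}\binom{2m}{m-k}\int_{\R^N}u(z)\,v(z+(k-j)y)\,dz.
\end{equation*}
Grouping by $\ell:=k-j$ and applying the Vandermonde identity $\sum_{j}\binom{2m}{m-j}\binom{2m}{m+j+\ell}=\binom{4m}{2m+\ell}$, the right-hand side collapses to $\int_{\R^N}u(z)\,\delta_{2m}v(z,y)\,dz$. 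Dividing by $|y|^{N+2s}$ and integrating in $y$ (Fubini is justified by the compact support and smoothness of $u,v$) yields $\cE_{2m,s}(u,v)=\int_{\R^N}u(z)\,L_{2m,s}v(z)\,dz$. Since $s<n\leq 2m$, Lemma \ref{mton} gives $L_{2m,s}v=L_{n,s}v$ pointwise, and then Lemma \ref{ibyp} yields the self-adjointness $\int u\,L_{n,s}v\,dz=\int L_{n,s}u\,v\,dz$, concluding the second equality.

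The main obstacle is justifying Fubini in the first part, since $\cE_{2m,s}$ is a priori only a formal double integral on $H^s$. This is addressed by arguing on the Schwartz class first, where the elementary bound $(2\sin(y\cdot\xi/2))^{4m}\leq C\min(|y\cdot\xi|^{4m},1)$ together with $s<2m$ ensures absolute convergence of all iterated integrals; the identity is then extended by the density of $\cS(\R^N)$ in $H^s(\R^N)$, using that both bilinear forms admit the same bounded Fourier representation $\int|\xi|^{2s}\cF u\,\cF v\,d\xi$.
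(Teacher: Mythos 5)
Your proposal is correct, but your route to the first identity is genuinely different from the paper's. You work on the Fourier side: the multiplier identity $\cF[\delta_m u(\cdot,y)](\xi)=(2\sin(y\cdot\xi/2))^{2m}\cF u(\xi)$, Plancherel in $x$, and the evaluation of the scalar integral $\int_{\R^N}(2\sin(y\cdot\xi/2))^{4m}|y|^{-N-2s}\,dy=2|\xi|^{2s}/c_{N,2m,s}$ (which is really Theorem \ref{const:lemma:2} together with the scaling/rotation step from the \emph{proof} of Theorem \ref{main:thm}, rather than the statement of Theorem \ref{main:thm} itself) make the constants telescope, after which \eqref{fourier} finishes. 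The paper instead stays in physical space: it proves $\cE_{2m,s}(\varphi,\psi)=\int_{\R^N} L_{n,s}\varphi\,\psi\,dx$ for test functions via Lemma \ref{dibyp} and Lemma \ref{mton} --- you re-derive Lemma \ref{dibyp} by hand with Vandermonde's identity, an equivalent, more computational alternative to the paper's iteration of $\delta_1$ --- then identifies this with $\cE_s$ through Theorem \ref{new:main:thm}, Lemma \ref{ibyp:ours} and Proposition \ref{interchange}, and finally extends to $H^s(\R^N)$ by proving boundedness of $\cE_{2m,s}$ with a Taylor expansion (Lemma \ref{mte:lemma}), Fatou's lemma and H\"older. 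Your Fourier route is shorter and in fact buys more than you claim: since the Plancherel computation of $\cE_{2m,s}(u,u)$ has a nonnegative integrand, Tonelli gives $\cE_{2m,s}(u,u)=\int_{\R^N}|\xi|^{2s}|\cF u(\xi)|^2\,d\xi$ directly for every $u\in H^s(\R^N)$, and polarization then yields the bilinear identity with no density argument at all; as written, your sentence that ``the Fourier representation just obtained shows $\cE_{2m,s}$ is bounded on $H^s$'' is slightly off, because that representation was only derived on $\cS(\R^N)$, so you should either run the Tonelli computation on $H^s$ itself or supply a separate boundedness estimate as the paper does. Two small further points: Lemma \ref{mton} is stated for strict inequalities $m>n>s$, so in the admissible case $n=2m$ you should simply note that $L_{2m,s}=L_{n,s}$ trivially; and your treatment of the second identity (discrete integration by parts, then Lemma \ref{mton} and Lemma \ref{ibyp}) otherwise follows the paper's own scheme.
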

In virtue of Corollary \ref{e:d:cor} and Theorem \ref{bilinear}, the results from \cite{AJS16,AJS17} extend trivially to $L_{m,s}$ and $\cE_{2m,s}$, note however that $L_{m,s}$ can be applied to a larger set of functions than \eqref{notions} and \eqref{notions2}. As a consequence, \cite[Theorems 1.1, 1.4, 1.5]{AJS17} can be generalized to allow outside data in $\R^N\backslash B$ which belongs to $\cL^1_s$ instead of $\cL^1_\sigma$ using the pointwise evaluation $(-\Delta)^s:=L_{m,s}$ with $m>s$.

\medskip

For our last result, we \emph{directly} show in detail that the Fourier symbol of $L_{m,s}$ is $|\xi|^{2s}$. This fully justifies the precise values of the normalizing constant \eqref{cNms:def}, which plays an essential role in our proofs.
\begin{thm}\label{main:thm}
Let $m\in\N$, $s\in(0,m)$, and $L_{m,s}:\cC_c^\infty(\R^N)\to L^2(\R^N)$ be given by \eqref{Ds:def}. Then $L_{m,s} u=\cF^{-1}(|\cdot|^{2s}\cF(u))$ in $\R^N$ for any $u\in \cC_c^\infty(\R^N)$.
\end{thm}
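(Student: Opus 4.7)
The plan is to compute $\cF(L_{m,s}u)(\xi)$ by bringing the Fourier transform inside the $y$-integral, recognizing its $\xi$-dependence via scaling and rotation, and then matching the normalizing constant. For $u\in\cC_c^\infty(\R^N)$, a $2m$-th order Taylor expansion together with the vanishing identities $\sum_{k=-m}^m(-1)^k\binom{2m}{m-k}k^j=0$ for $0\le j<2m$ yields the bound $|\delta_m u(x,y)|\le C\min(|y|^{2m},1)$, which justifies Fubini. Applying $\cF(u(\cdot+ky))(\xi)=e^{iky\cdot\xi}\cF u(\xi)$ together with the identity
\[
\sum_{k=-m}^m(-1)^k\binom{2m}{m-k}e^{ikt}=(2\sin(t/2))^{2m},
\]
which follows from $(e^{it/2}-e^{-it/2})^{2m}=(2i)^{2m}\sin^{2m}(t/2)$ after expanding and reindexing $j=m-k$, gives $\cF(L_{m,s}u)(\xi)=\tfrac{c_{N,m,s}}{2}I(\xi)\cF u(\xi)$ with $I(\xi):=\int_{\R^N}(2\sin(y\cdot\xi/2))^{2m}|y|^{-N-2s}\,dy$. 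A rotation sending $\xi/|\xi|$ to $e_1$ followed by the scaling $y\mapsto y/|\xi|$ produces $I(\xi)=|\xi|^{2s}I_{N,m,s}$, where $I_{N,m,s}:=\int_{\R^N}(2\sin(y_1/2))^{2m}|y|^{-N-2s}\,dy$ is independent of $\xi$; the task reduces to verifying $c_{N,m,s}I_{N,m,s}=2$.

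To compute $I_{N,m,s}$, I would first integrate out the transverse variables via the classical identity
\[
\int_{\R^{N-1}}(a^2+|y'|^2)^{-(N+2s)/2}\,dy'=\frac{\pi^{(N-1)/2}\Gamma(s+1/2)}{\Gamma(\tfrac{N+2s}{2})}\,|a|^{-1-2s},
\]
reducing the problem to a one-dimensional integral. Using the expansion
\[
(2\sin(u/2))^{2m}=2\sum_{k=1}^m(-1)^{k+1}\binom{2m}{m-k}[1-\cos(ku)]
\]
(obtained from the finite Fourier series of $(2\sin(u/2))^{2m}$ together with its vanishing at $u=0$) and the classical evaluation $\int_0^\infty(1-\cos v)v^{-1-2s}\,dv=\pi/(2\Gamma(1+2s)\sin(\pi s))$, valid on $s\in(0,1)$, yields a closed-form expression for $I_{N,m,s}$. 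The Legendre duplication formula $\Gamma(2s+1)=4^s\Gamma(s+1/2)\Gamma(s+1)/\sqrt{\pi}$ together with the reflection identity $s\Gamma(s)\sin(\pi s)=-\pi/\Gamma(-s)$ then transforms this expression precisely into $2/c_{N,m,s}$ in accordance with the first branch of \eqref{cNms:def}. Extension to all $s\in(0,m)\setminus\N$ follows by analytic continuation in $s$, since both $I_{N,m,s}$ and $2/c_{N,m,s}$ are holomorphic on the strip $0<\Re s<m$.

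The main obstacle is the integer case $s=n\in\{1,\ldots,m-1\}$, where the first branch of \eqref{cNms:def} is indeterminate $0/0$: the sum $\sum_{k=1}^m(-1)^k\binom{2m}{m-k}k^{2n}$ vanishes by the very finite-difference annihilation that governs $\delta_m$, while $1/\Gamma(-n)=0$. I would appeal to continuity in $s$ of the Fourier symbol $\tfrac{c_{N,m,s}}{2}I(\xi)$, which follows from the uniform Taylor bound on $\delta_m u$ and dominated convergence. A first-order Taylor expansion around $s=n$ using $\mathrm{Res}_{s=n}\Gamma(-s)=(-1)^{n+1}/n!$ and $\tfrac{d}{ds}k^{2s}=2(\ln k)k^{2s}$ gives
\[
\lim_{s\to n}\Gamma(-s)\sum_{k=1}^m(-1)^k\binom{2m}{m-k}k^{2s}=\frac{2(-1)^{n+1}}{n!}\sum_{k=2}^m(-1)^k\binom{2m}{m-k}k^{2n}\ln k,
\]
which matches exactly the integer branch of \eqref{cNms:def}. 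Passing to the limit $s\to n$ in the identity already established on $(0,m)\setminus\N$ completes the proof.
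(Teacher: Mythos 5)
Your argument is correct, but it departs from the paper's proof at the decisive step, namely the evaluation of the constant $\int_{\R^N}(2\sin(y_1/2))^{2m}|y|^{-N-2s}\,dy$. Up to that point the two proofs coincide: both pass the Fourier transform inside the $y$-integral (note that your bound $|\delta_m u(x,y)|\le C\min(|y|^{2m},1)$ must be combined with the compact support of $u$ in the $x$-variable to get the joint integrability needed for Fubini), both use $\sum_{k=-m}^m(-1)^k\binom{2m}{m-k}e^{ikt}=(2\sin(t/2))^{2m}$, which is \eqref{iteration2}, and both reduce to a one-dimensional integral by integrating out the transverse variables as in \eqref{beta:id}. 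The paper then evaluates $\int_0^\infty\sin^{2m}(t)\,t^{-1-2s}\,dt$ for every $s\in(0,m)$ at once, rewriting it via the Laplace-transform identity \eqref{Lapl:id} and carrying out the partial-fraction analysis of Lemmas \ref{lm}--\ref{const:lemma}; in particular the integer case $s=n$ is handled head-on, the logarithms in \eqref{cNms:def} emerging from the cancellation $A_1=A_2$ in \eqref{c2:6}. You instead expand $(2\sin(u/2))^{2m}=2\sum_{k=1}^m(-1)^{k+1}\binom{2m}{m-k}(1-\cos(ku))$ and integrate term by term against $u^{-1-2s}$ --- legitimate only for $s\in(0,1)$, since the individual integrals diverge for $s\ge1$, which is exactly why you then extend to $s\in(0,m)\setminus\N$ by analytic continuation in $s$ (both sides are holomorphic on the strip, the poles of $\Gamma(-s)$ at $s=1,\dots,m-1$ being removable because the alternating sum vanishes there by Lemma \ref{calc:lemma}) --- and finally you recover the integer branch of \eqref{cNms:def} from the residue $(-1)^{n+1}/n!$ of $\Gamma(-s)$ at $s=n$. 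Your limit computation does reproduce the second line of \eqref{cNms:def}, and it is in substance the same continuity statement the paper proves separately in Lemma \ref{a:lemma} for its asymptotic analysis. What the paper's route buys is a fully real-variable, self-contained computation valid uniformly in $s$, in keeping with its stated aims; what yours buys is brevity and a conceptual explanation of the integer-case constant as a removable singularity of the non-integer formula, at the price of invoking the classical value of $\int_0^\infty(1-\cos v)v^{-1-2s}\,dv$ and the identity theorem.
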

We note that this statement is known and we include a different elementary proof for completeness. For an alternative proof, see \cite[Chapter 5, Lemma 25.3 and Theorem 26.1]{SKM93}, which uses Fourier series and analytic continuation.

\medskip

To conclude this introduction, let us suggest an heuristic interpretation for $L_{m,s}$. Intuitively, \eqref{Ds:def} implies that any power $s>0$ of the Laplacian operator $\Ds$ can be seen as another power $\theta\in(0,1)$ of a polylaplacian ${(-\Delta)}^m$ in such a way that
$s=\theta m$. Indeed, with this notation, one can rewrite \eqref{Ds:def} as 
\begin{align*}
L_{m,s} u(x)=\frac{c_{N,\m,s}}{2}\int_{\R^N} \frac{\delta_\m u(x,y)}{|y|^{2\theta m}} \ \frac{dy}{|y|^N},\qquad x\in \R^N,
\end{align*}
which is a nonlocal average in $\R^N$ of the $2m$-th order difference quotient with an altered exponent, as it happens for the standard fractional Laplacian \eqref{frac-lapl}.
In view of Theorem \ref{van:cor}, we have the convergence to the polylaplacian ${(-\Delta)}^m$ as $\theta\to 1^-$. Also, the Fourier symbol of the operator can be seen as $|\xi|^s=(|\xi|^m)^{\theta}$.
In this spirit, Lemma \ref{mton} implies the equivalence between considering a power $\theta$ of the polylaplacian of order $2m$ or a power $\tau\in(0,1)$ of the polylaplacian of order $2n$, as long as $\theta m=s=\tau n$.

\medskip

The paper is organized as follows.  We collect first some preliminary results in Section \ref{Preliminaries}, where in particular the proof of Lemma \ref{welldef:lemma} can be found.  Section \ref{Equiv:sec} contains the proofs of all the other results stated in the introduction, except for Theorem \ref{main:thm}, to which Section \ref{n:c:sec} is devoted.

\section{Preliminaries}\label{Preliminaries}

\subsection{The difference operator}

The next Lemma follows the ideas from \cite[Lemma 1]{TD16}.
\begin{lemma}\label{iteration}
Let $\m\in \N_0$ and $u:\R^N\to \R$, then 
\begin{equation}\label{iteration1}
\delta_{\m+1}(x,y)= \delta_{\m} [\delta_1 u(\cdot,y)](x,y)\qquad \text{  for all }x,y\in \R^N.
\end{equation}
In particular, if $f(t):=\exp(it)$ for $t\in \R$, then
\begin{equation}\label{iteration2}
\delta_\m f(0,t)=2^\m(1-\cos(t))^\m \qquad \text{ for $t\in \R$}.
\end{equation}
\end{lemma}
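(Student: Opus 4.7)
The plan is to recognize $\delta_m$ as the $m$-th power of a single second-order difference operator acting on the first argument, which makes both claims essentially immediate.

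For \eqref{iteration1}, I would first rewrite $\delta_m u(x,y)$ in operator form. Let $T_y$ denote the shift $T_y u(x):=u(x+y)$ and set $D_y:=2I-T_y-T_y^{-1}$, so that $\delta_1 u(x,y) = D_y u(x)$. The substitution $j=m-k$ in the definition of $\delta_m$ gives
\begin{equation*}
\delta_m u(x,y)=(-1)^m\sum_{j=0}^{2m}(-1)^j\binom{2m}{j}u(x+(m-j)y)=(-1)^m T_y^{-m}(T_y-I)^{2m}u(x),
\end{equation*}
and since all shifts commute, $T_y^{-m}(T_y-I)^{2m}=\bigl(T_y^{-1}(T_y-I)^2\bigr)^m=(T_y-2I+T_y^{-1})^m$, so that $\delta_m u(x,y) = (-1)^m (T_y-2I+T_y^{-1})^m u(x) = D_y^m u(x)$. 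The iteration identity is then just the associativity of operator composition:
\begin{equation*}
\delta_{m+1}u(x,y)=D_y^{m+1}u(x)=D_y^m(D_y u)(x)=D_y^m[\delta_1 u(\cdot,y)](x)=\delta_m[\delta_1 u(\cdot,y)](x,y).
\end{equation*}

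For \eqref{iteration2}, I would insert $f(t)=e^{it}$ directly into the definition. The same substitution $j=m-k$ combined with the binomial theorem yields
\begin{equation*}
\delta_m f(0,t)=\sum_{k=-m}^{m}(-1)^k\binom{2m}{m-k}e^{ikt}=(-1)^m e^{imt}\sum_{j=0}^{2m}\binom{2m}{j}(-e^{-it})^j=(-1)^m e^{imt}(1-e^{-it})^{2m}.
\end{equation*}
Factoring $1-e^{-it}=2ie^{-it/2}\sin(t/2)$ gives $(1-e^{-it})^{2m}=(-1)^m 4^m e^{-imt}\sin^{2m}(t/2)$, so the factors $(-1)^m$ and $e^{\pm imt}$ cancel and one obtains $\delta_m f(0,t)=4^m\sin^{2m}(t/2)=2^m(1-\cos t)^m$ by the half-angle identity $2\sin^2(t/2)=1-\cos t$.

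The only genuine obstacle is bookkeeping the sign $(-1)^{m-j}$ and the change of summation index $k\leftrightarrow m-j$; once the identification $\delta_m u(\cdot,y)=D_y^m u$ is in hand, both claims reduce to one-line consequences of elementary operator algebra and the binomial theorem. An alternative, purely combinatorial proof of \eqref{iteration1} is possible by splitting the right-hand side into three shifted sums and applying the twice-iterated Pascal identity $\binom{2m+2}{m+1-j}=\binom{2m}{m-1-j}+2\binom{2m}{m-j}+\binom{2m}{m+1-j}$, but the operator viewpoint seems cleaner and generalizes without change.
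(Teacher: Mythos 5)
Your proof is correct, and it takes a genuinely different route from the paper. The paper proves \eqref{iteration1} by an explicit index-shift computation: it splits $\delta_m[\delta_1 u(\cdot,y)](x,y)$ into three shifted sums and recombines them via the twice-iterated Pascal identity $\binom{2m}{m-1-k}+2\binom{2m}{m-k}+\binom{2m}{m+1-k}=\binom{2m+2}{m+1-k}$ (exactly the alternative you mention at the end), and then obtains \eqref{iteration2} by induction on $m$, using \eqref{iteration1} together with $\delta_1 f(0,t)=2(1-\cos t)$. You instead identify $\delta_m u(\cdot,y)=(2I-T_y-T_y^{-1})^m u=D_y^m u$ once and for all via the substitution $j=m-k$ and the factorization $T_y^{-m}(T_y-I)^{2m}=(T_y-2I+T_y^{-1})^m$; after that, \eqref{iteration1} is operator associativity and \eqref{iteration2} follows directly from the binomial theorem and $1-e^{-it}=2ie^{-it/2}\sin(t/2)$, with no induction. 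Your bookkeeping of signs and indices checks out (including the degenerate case $m=0$, where $\delta_0 u(x,y)=u(x)=D_y^0u(x)$), and since everything is finite pointwise algebra no regularity of $u$ is needed. What the operator viewpoint buys is conceptual economy: it yields at once the stronger semigroup property $\delta_{m+n}u(\cdot,y)=\delta_m[\delta_n u(\cdot,y)](\cdot,y)$, which the paper later reconstructs by iterating \eqref{iteration1} (e.g.\ in Lemma \ref{dibyp}), and it makes the Fourier-side computation in \eqref{iteration2} transparent as evaluation of the symbol of $D_y^m$. What the paper's argument buys is that it stays entirely within elementary sums and a single binomial identity, in keeping with its self-contained, calculation-based style.
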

\begin{proof}
By definition,
\begin{align*}
&\delta_{\m}[\delta_1 u(\cdot,y)](x,y)=\sum_{k=-\m}^{\m} (-1)^k{ \binom{2m}{m-k}}[-u(x+(k-1)y)+2u(x+ky)-u(x+(k+1)y)]\\
&=\sum_{k=-\m}^{\m} (-1)^{k-1}{ \binom{2\m}{\m-1-(k-1)}}u(x+(k-1)y)+2\sum_{k=-\m}^{\m} (-1)^{k}{ \binom{2m}{m-k}}u(x+ky)\\
&\qquad  +\sum_{k=-\m}^{\m} (-1)^{k+1}{ \binom{2\m}{\m+1-(k+1)}}u(x+(k+1)y)\\
&=\sum_{k=-\m-1}^{\m-1} (-1)^{k}{ \binom{2\m}{\m-1-k}}u(x+ky)+2\sum_{k=-\m}^{\m} (-1)^{k}{ \binom{2m}{m-k}}u(x+ky)\\
&\qquad+\sum_{k=-\m+1}^{\m+1} (-1)^{k}{ \binom{2\m}{\m+1-k}}u(x+ky)\\
&= (-1)^{-\m-1}u(x-(\m+1)y)+ (-1)^{-\m}2\m u(x-\m y) + 2 (-1)^{-\m} u(x-\m y)\\
&\qquad+\sum_{k=-\m+1}^{\m-1} (-1)^{k}\Big[{ \binom{2\m}{\m-1-k}}+2{ \binom{2m}{m-k}}+{ \binom{2\m}{\m+1-k}}\Big]u(x+ky)\\
&\qquad+2(-1)^{\m}u(x+\m y) + (-1)^{\m}2\m u(x+\m y)+  (-1)^{\m+1}u(x+(\m+1)y)\\
&= (-1)^{-\m-1}u(x-(\m+1)y)+ (-1)^{-\m}2(\m+1) u(x-\m y)\\
&+\sum_{k=-\m+1}^{\m-1} (-1)^{k}{\binom{2\m+2}{\m+1-k}}u(x+ky)+ (-1)^{\m}(2\m+1) u(x+\m y)+  (-1)^{\m+1}u(x+(\m+1)y)\\
&= \sum_{k=-\m-1}^{\m+1} (-1)^k{ \binom{2(m+1)}{m+1-k}}u(x+ky)= \delta_{\m+1} u(x,y),
\end{align*}
where we used that ${ \binom{2\m}{\m-1-k}}+2{ \binom{2m}{m-k}}+{ \binom{2\m}{\m+1-k}} = {\binom{2\m+2}{\m+1-k}},$ and \eqref{iteration1} follows. We now argue \eqref{iteration2} by induction on $m$. For $\m=1$ it follows that 
\begin{align}\label{df1}
\delta_1f(x,t)=-f(x-t)+2f(x)-f(x-t)=-e^{-it}f(x)+2f(x)-e^{it}f(x)
\end{align}
and, in particular, $\delta_1f(0,t)=-e^{-it}+2-e^{it}=2(1-\cos(t))$, since $e^{-it}+e^{it}=2\cos(t)$.  Now, assume that \eqref{iteration2} holds for some $\m\in \N$, then, by \eqref{df1}, \eqref{iteration1},
\begin{align*}
\delta_{\m+1}f(0,t)&= \delta_{\m}[\delta_1f(\cdot,t)](0,t)=(-e^{-it}+2-e^{it})\delta_{\m}f(0,t)\\
&= 2(1-\cos(t))2^{\m}(1-\cos(t))^{\m}=2^{\m+1}(1-\cos(t))^{\m+1},
\end{align*}
and the claim follows. 
\end{proof}

\begin{lemma}\label{calc:lemma}
	Let $m,n\in\N_0$ with $n<m$, then
	\begin{align}\label{calc}
	\sum_{k=-\m}^\m (-1)^k { \binom{2m}{m-k}} k^{2n}=0\qquad\text{ and }\qquad \sum_{k=-\m}^\m (-1)^k { \binom{2m}{m-k}} k^{2m}=(-1)^\m (2m)!
	\end{align}
\end{lemma}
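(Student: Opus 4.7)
The plan is to read off both sums in \eqref{calc} as evaluations of the finite difference operator $\delta_m$ on one-dimensional monomials. With $u_d(x):=x^d$ viewed as a function on $\R$ (extended trivially to $\R^N$ if needed), the definition of $\delta_m$ gives
\[
\delta_m u_{d}(0,1)=\sum_{k=-m}^m(-1)^k\binom{2m}{m-k}k^{d},
\]
so the first identity in \eqref{calc} becomes the assertion $\delta_m u_{2n}(0,1)=0$ for $n<m$, and the second becomes $\delta_m u_{2m}(0,1)=(-1)^m(2m)!$.

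First I would analyze the building block $\delta_1(\cdot,1)$ acting on polynomials. Expanding $(x\pm 1)^k$ by the binomial theorem gives, for every $k\geq 2$,
\[
\delta_1 u_k(x,1)=2x^k-(x+1)^k-(x-1)^k=-k(k-1)\,x^{k-2}+\text{lower order terms},
\]
while $\delta_1 u_0\equiv\delta_1 u_1\equiv 0$. Hence $\delta_1(\cdot,1)$ acts on polynomials as a linear operator that lowers the degree by exactly two (whenever the degree is at least two) and multiplies the leading coefficient by $-k(k-1)$. By Lemma \ref{iteration} and a straightforward induction on $m$, $\delta_m u(x,y)=\bigl(\delta_1(\cdot,y)\bigr)^m u(x)$, so specializing to $y=1$ reduces everything to iterating this building block.

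Applying the $m$-fold iteration to $u_{2n}$ with $n<m$ produces the zero polynomial, because $m$ iterations of a degree-reducing-by-two operator annihilate every polynomial of degree at most $2m-1$; evaluating at $x=0$ then gives the first identity. The same iteration applied to $u_{2m}(x)=x^{2m}$ yields a polynomial of degree zero, hence a constant independent of $x$; its value follows from tracking leading coefficients through the telescoping product
\[
(-1)^m\,(2m)(2m-1)(2m-2)(2m-3)\cdots 2\cdot 1=(-1)^m(2m)!,
\]
so $\delta_m u_{2m}(0,1)=(-1)^m(2m)!$, as required. The only point requiring care is this bookkeeping of leading coefficients across the iteration, which is routine and presents no substantive obstacle.
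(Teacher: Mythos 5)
Your argument is correct, and it reaches \eqref{calc} by a route that differs from the paper's in its core mechanism, even though both start from the same place. Like the paper, you reduce everything to Lemma \ref{iteration}, i.e.\ to viewing $\delta_m(\cdot,y)$ as the $m$-fold iterate of $\delta_1(\cdot,y)$. The paper then proves, by induction along this iteration, the integral representation \eqref{calc:lemma:1} expressing $\delta_n g(x,t)$ as $(-1)^n t^{2n}$ times an average of $g^{(2n)}$ over $[0,1]^{2n}$ (a statement valid for arbitrary $\cC^{2n}$ functions, which it then evaluates on the homogeneous monomials $g_n(t)=t^{2s}$ via the limit \eqref{calc:lemma:2}); your proof instead stays entirely inside polynomial algebra: $\delta_1(\cdot,1)$ lowers degrees by two, kills degrees $\leq 1$, and scales the leading coefficient by $-d(d-1)$, so $m$ iterations annihilate $x^{2n}$ for $n<m$ and send $x^{2m}$ to the constant $(-1)^m(2m)!$. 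Your version is more elementary (no Taylor/fundamental-theorem integral formula, no limit as $t\to 0$) and avoids a small notational infelicity in the paper's final display, where the limit is written with the exponent $2n$ and the derivative $g_n^{(2n)}$ rather than the order $2m$ dictated by \eqref{calc:lemma:1}; what it gives up is the general-purpose identity \eqref{calc:lemma:1}, which however is not used elsewhere in the paper. The two points you should make explicit when writing this up are the ones you already flag or use implicitly: the bookkeeping that the lower-order terms created at each step have degree at most $2m-2j-1$ after $j$ steps and are therefore annihilated by the remaining iterations (so only the leading coefficient survives), and the trivial verification that $\delta_1(\cdot,1)$ annihilates affine functions, which is what makes ``$m$ iterations kill degree $\leq 2m-1$'' true; with those spelled out the proof is complete, including the case $n=0$ under the usual convention $0^0=1$.
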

\begin{proof}
	First we claim that, for $n\in \N$, $g\in \cC^{2n}(\R)$, and $x,t\in \R$,
	\begin{equation}\label{calc:lemma:1}
	\delta_{n}g(x,t)=(-1)^nt^{2n}\int_0^1\ldots\int_0^1 g^{(2n)}(x+\sum_{k=1}^{n}((t_{k,1}-t_{k,2})t)\ dt_{1,1}\ldots dt_{n,1}dt_{1,2}\ldots dt_{n,2}.
	\end{equation}
	We argue by induction on $n$. For $n=1$ the claim follows, since 
	\begin{align*}
	\delta_1g(x,t)=t\int_{0}^{1}g'(x-t+t_1t)\ dt_1 -t\int_0^1g'(x+t_1t)\ dt_1=-t^2\int_0^1\int_0^1g''(x+t_{1,1}t-t_{1,2}t)\ dt_{1,2}\ dt_{1,1}.
 	\end{align*}
Next, if $n\in \N$ is such that \eqref{calc:lemma:1} holds, then, by Lemma \ref{iteration},
	\begin{align*}
	\delta_{n+1}&g(x,t)=\delta_n[\delta_1(\cdot,t)](x,t)=-t^{2}\int_0^1\int_0^1\delta_n[g''(\cdot+(t_{1,1}-t_{1,2})t)](x,t)\ dt_{1,1}dt_{1,2} \\
	&=(-1)^{n+1}t^{2n+2}\int_0^1\ldots\int_0^1g^{(2n+2)}(\cdot+\sum_{k=2}^{n+1}((t_{k,1}-t_{k,2})t+(t_{1,1}-t_{1,2})t)](x,t)\ dt_{1,1}\ldots dt_{n+1,2}.
	\end{align*}
	Therefore \eqref{calc:lemma:1} holds for all $n\in\N$.  In particular, by continuity, 
	\begin{equation}\label{calc:lemma:2}
	\lim_{t\to 0}\frac{\delta_{n}g(0,t)}{t^{2n}}=(-1)^n g^{(2n)}(0)\int_0^1\ldots\int_0^1\ dt_{1,1}\ldots dt_{n,2}=(-1)^n g^{(2n)}(0).
	\end{equation}
	Let $m,n\in \N$ with $n<m$ and $g_s(t):=t^{2s}$ for $t\in\R$ and $s>0$, then, by \eqref{calc:lemma:2},
	\begin{align*}
 	\sum_{k=-\m}^\m (-1)^k { \binom{2m}{m-k}} k^{2n}&=\delta_m g_n(0,1)=\lim_{t\to 0} \frac{\delta_m g_n(0,t)}{t^{2n}}=(-1)^n g_n^{(2n)}(0)=0,\\
 	\sum_{k=-\m}^\m (-1)^k { \binom{2m}{m-k}} k^{2m}&=\delta_\m g_m(0,1)=\lim_{t\to 0}\frac{\delta_m g_m(0,t)}{t^{2m}}=(-1)^m g_m^{(2\m)}(0)=(-1)^m (2m)!,
 	\end{align*}
 	as claimed.
\end{proof}

For our next result, recall that for $x\in\R^N$, $\eta>0$, $j\in\N$, $v\in \cC^{2j}(B_\eta(x))$, and $h\in B_\eta(0)$, the \emph{multivariate Taylor expansion} yields that
\begin{align}\label{mte}
 v(x+h)&=\sum_{|\alpha|\leq 2j-1}\frac{\partial^\alpha v(x)}{\alpha!}h^\alpha + \sum_{|\alpha|=2j}\frac{\partial^\alpha v(x+\theta h)}{\alpha!}h^\alpha
\end{align}
for some $\theta(v,x,h,j)=\theta\in(0,1),$ where $\alpha=(\alpha_1,\ldots, \alpha_N)\in \N^N$, $\alpha!=\alpha_1!\ldots\alpha_N!$, 
\begin{align*}
 \partial^\alpha u = \frac{\partial^{|\alpha|} u}{\partial x_1^{\alpha_1}\cdots\partial x_N^{\alpha_N}},\qquad  y^\alpha = \prod_{i=1}^N y_i^{\alpha_i},
\end{align*}
and $B_\eta(x)$ denotes the open ball centred at $x$ of radius $\eta$.

\begin{lemma}\label{mte:lemma}
Let $x\in\R^N$, $r>0$, $u\in \cC^{2m}(B_r(x))$, $m,j\in \N$, $m\geq j$, and $y\in B_r(0)$.  There is $\theta(u,x,y,j)=\theta\in[0,1]$ such that 
\begin{align}\label{mte:eq}
\delta_m u(x,y)
=\sum_{|\alpha|=2{j}}\frac{1}{\alpha!} \sum_{k=-\m}^\m (-1)^k { \binom{2m}{m-k}}k^{2m} \partial^\alpha u(x+k(\theta y))y^\alpha.
\end{align}
 \end{lemma}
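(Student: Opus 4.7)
The plan is to Taylor-expand $u(x+ky)$ around $x$ up to order $2j$, substitute into $\delta_m u(x,y)$, show that every term below order $2j$ cancels by combinatorial and parity identities, and then collapse the surviving integral remainder to a single evaluation via the first mean value theorem for integrals.

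Concretely, for each $k\in\{-m,\ldots,m\}$ I use the integral-remainder form of the multivariate Taylor expansion \eqref{mte}, which is available because $u\in\cC^{2m}(B_r(x))$ and $m\geq j$, to write
\begin{equation*}
u(x+ky) = \sum_{|\alpha|<2j}\frac{\partial^\alpha u(x)}{\alpha!}(ky)^\alpha + 2j\sum_{|\alpha|=2j}\frac{(ky)^\alpha}{\alpha!}\int_0^1(1-t)^{2j-1}\partial^\alpha u(x+tky)\,dt.
\end{equation*}
Substituting into $\delta_m u(x,y)=\sum_{k=-m}^m(-1)^k\binom{2m}{m-k}u(x+ky)$, each monomial contribution from the polynomial part is multiplied by the coefficient $\sum_{k=-m}^m(-1)^k\binom{2m}{m-k}k^{|\alpha|}$. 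For odd $|\alpha|$ this sum vanishes by pairing the $\pm k$ summands, using $\binom{2m}{m-k}=\binom{2m}{m+k}$ together with $(-1)^{-k}=(-1)^k$; for even $|\alpha|=2n$ with $0\leq n<j\leq m$, Lemma \ref{calc:lemma} gives exactly zero.

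Only the remainder therefore survives. Defining the continuous function
\begin{equation*}
G(t):=\sum_{|\alpha|=2j}\frac{y^\alpha}{\alpha!}\sum_{k=-m}^m(-1)^k\binom{2m}{m-k}k^{2j}\partial^\alpha u(x+tky),\qquad t\in[0,1],
\end{equation*}
the identity becomes $\delta_m u(x,y)=2j\int_0^1(1-t)^{2j-1}G(t)\,dt$. Since $(1-t)^{2j-1}\geq 0$ on $[0,1]$, the first mean value theorem for integrals supplies some $\theta\in[0,1]$ with $\int_0^1(1-t)^{2j-1}G(t)\,dt=G(\theta)/(2j)$, which reinserted yields the claimed identity in the form
\begin{equation*}
\delta_m u(x,y)=\sum_{|\alpha|=2j}\frac{y^\alpha}{\alpha!}\sum_{k=-m}^m(-1)^k\binom{2m}{m-k}k^{2j}\partial^\alpha u(x+k(\theta y)).
\end{equation*}

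The main obstacle I expect is securing a \emph{single} $\theta$ that works simultaneously across all multi-indices $\alpha$ with $|\alpha|=2j$. A componentwise Lagrange-type remainder would deliver a $\theta_\alpha$ depending on $\alpha$, which is insufficient for the stated conclusion. Packaging the remainders inside one integral and applying the mean value theorem to the aggregated integrand $G$ exactly once is what guarantees a common $\theta\in[0,1]$, which is the reason I favor the integral form of the remainder over Lagrange's.
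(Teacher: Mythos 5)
Your proof is correct, but it reaches the common $\theta$ by a different mechanism than the paper. The paper applies the Lagrange-form expansion \eqref{mte} \emph{once}, to the map $y\mapsto\delta_m u(x,y)$ at $y=0$: since $\partial_y^\alpha\delta_m u(x,0)=\partial^\alpha u(x)\sum_{k=-m}^m(-1)^k\binom{2m}{m-k}k^{|\alpha|}=0$ for every $|\alpha|<2j$ (Lemma \ref{calc:lemma} for even orders, antisymmetry in $k$ for odd ones), only the order-$2j$ Lagrange remainder survives, and the single $\theta$ is delivered automatically by \eqref{mte}. You instead expand each translate $u(x+ky)$ separately with the integral remainder, cancel the polynomial part with exactly the same identities, and then manufacture one common $\theta$ by the first mean value theorem applied to the aggregated scalar integrand $G$ against the nonnegative weight $(1-t)^{2j-1}$ — a step you correctly identify as the crux, since a naive Lagrange remainder per $k$ or per $\alpha$ would not give a uniform $\theta$. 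So the two arguments rest on the same cancellation lemma; the paper's route is shorter because Taylor-expanding the aggregated function $\delta_m u(x,\cdot)$ never produces competing intermediate points, while yours makes the legitimacy of a single $\theta$ more explicit at the cost of the integral remainder plus the weighted mean value theorem. Two minor remarks: your computation yields the exponent $k^{2j}$, which is what differentiating $u(x+ky)$ a total of $2j$ times in $y$ actually gives, so the $k^{2m}$ appearing in \eqref{mte:eq} is a typo (the paper itself later uses the lemma with exponent $2n-2$ when $|\alpha|=2n-2$, as in \eqref{dec1}); and, exactly like the paper, you implicitly assume the points $x+tky$, $|k|\le m$, $t\in[0,1]$, remain in the set where $u$ is $\cC^{2m}$, which is how the lemma is invoked later with radii scaled by $1/(2m)$.
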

\begin{proof}
Let $u\in \cC^{2{j}}(B_r(x))$, $n\in \N$, $n< 2{j}$, $\alpha\in \N^N$, $|\alpha|=n$. Then,
\begin{align*}
 \partial_y^\alpha \delta_m u(x,0)
 &= \sum_{k=-\m}^\m (-1)^k { \binom{2m}{m-k}}k^{n} [\partial^\alpha u](x+ky)|_{y=0}=\partial^\alpha u(x)\sum_{k=-\m}^\m (-1)^k { \binom{2m}{m-k}}k^{n}=0,
 \end{align*}
 by Lemma \ref{calc:lemma}.  Therefore, applying \eqref{mte} to $y\mapsto\delta_m u(x,y)$ at $y=0$, we have that, for some $\theta\in[0,1]$ and for all $h\in B_r(0)$, that
\begin{align*}
 \delta_m u(x,h)&=\sum_{|\alpha|=2{j}}\frac{\partial^\alpha \delta_m u(x,\theta h)}{\alpha!}h^\alpha=\sum_{|\alpha|=2{j}}\frac{1}{\alpha!} \sum_{k=-\m}^\m (-1)^k { \binom{2m}{m-k}}k^{2m} \partial^\alpha u(x+k(\theta h))h^\alpha,
\end{align*}
and \eqref{mte:eq} follows. 
\end{proof}

\begin{lemma}\label{mte:lemma:2}
Let $U\subset \R^N$, $x\in U$, $\varepsilon>0$, and $u\in \cC^{2m}(U)$. There is $\rho(U,x,m,u,\varepsilon)=\rho>0$ such that
 \begin{align}\label{mte:eq:2}
  \delta_m u(x,y)
  =R(x,y)+\sum_{|\alpha|=2m}\frac{(-1)^m(2m)!}{\alpha!} \partial^\alpha u(x)y^\alpha \qquad \text{ for all }y\in B_\rho(0),
 \end{align}
 where $|R(x,y)|\leq C \varepsilon |y|^{2m}$ for some $C(N,m)=C>0$.
 \end{lemma}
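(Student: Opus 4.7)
The plan is to combine Lemma \ref{mte:lemma} with the index-$j=m$ choice and the evaluation of the combinatorial sum at $2n=2m$ provided by Lemma \ref{calc:lemma}. Applying Lemma \ref{mte:lemma} with $j=m$ yields some $\theta=\theta(u,x,y,m)\in[0,1]$ such that
\begin{align*}
\delta_m u(x,y)=\sum_{|\alpha|=2m}\frac{1}{\alpha!}\sum_{k=-m}^{m}(-1)^k\binom{2m}{m-k}k^{2m}\,\partial^\alpha u(x+k\theta y)\,y^\alpha.
\end{align*}
Inside the inner sum, I would now write $\partial^\alpha u(x+k\theta y)=\partial^\alpha u(x)+\bigl[\partial^\alpha u(x+k\theta y)-\partial^\alpha u(x)\bigr]$. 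By Lemma \ref{calc:lemma}, $\sum_{k=-m}^m (-1)^k\binom{2m}{m-k}k^{2m}=(-1)^m(2m)!$, so the constant term of the decomposition produces exactly the desired leading contribution
$\sum_{|\alpha|=2m}\frac{(-1)^m(2m)!}{\alpha!}\partial^\alpha u(x)\,y^\alpha$,
while the rest is the remainder
\begin{align*}
R(x,y):=\sum_{|\alpha|=2m}\frac{y^\alpha}{\alpha!}\sum_{k=-m}^{m}(-1)^k\binom{2m}{m-k}k^{2m}\bigl[\partial^\alpha u(x+k\theta y)-\partial^\alpha u(x)\bigr].
\end{align*}

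To estimate $R(x,y)$, I would use the continuity of each $\partial^\alpha u$ for $|\alpha|=2m$ guaranteed by $u\in \cC^{2m}(U)$. Since there are only finitely many multi-indices with $|\alpha|=2m$, for a given $\varepsilon>0$ I can choose $\delta>0$ (uniform over those $\alpha$) so that $B_{\delta}(x)\subset U$ and $|\partial^\alpha u(z)-\partial^\alpha u(x)|<\varepsilon$ whenever $|z-x|<\delta$. Setting $\rho:=\delta/m$ ensures that for $|y|<\rho$, $|k|\le m$ and $\theta\in[0,1]$ one has $|x+k\theta y-x|\le m|y|<\delta$, so the bracketed difference is bounded by $\varepsilon$.

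Combining this with $|y^\alpha|\le |y|^{2m}$ for $|\alpha|=2m$ gives
\begin{align*}
|R(x,y)|\le \varepsilon\,|y|^{2m}\sum_{|\alpha|=2m}\frac{1}{\alpha!}\sum_{k=-m}^{m}\binom{2m}{m-k}k^{2m}=:C\,\varepsilon\,|y|^{2m},
\end{align*}
where $C=C(N,m)$ is a purely combinatorial constant, which is the claim. There is essentially no hard step here: the argument is just Taylor expansion at order $2m$ combined with the vanishing/normalization identities already established in Lemma \ref{calc:lemma}; the only mild subtlety is making sure the radius $\rho$ is selected uniformly in the finitely many multi-indices $\alpha$ of length $2m$ so as to absorb all remainder contributions simultaneously.
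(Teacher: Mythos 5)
Your proof is correct and follows essentially the same route as the paper: apply Lemma \ref{mte:lemma} with $j=m$, use the identity $\sum_{k=-m}^{m}(-1)^k\binom{2m}{m-k}k^{2m}=(-1)^m(2m)!$ from Lemma \ref{calc:lemma} to extract the leading term, and bound the remainder by uniform continuity of the derivatives $\partial^\alpha u$, $|\alpha|=2m$, on a small ball around $x$. Your explicit choice $\rho=\delta/m$ handling both the domain condition and the $\varepsilon$-oscillation is exactly the (implicit) content of the paper's choice of $\rho\in(0,\rho_0]$, so there is nothing to add.
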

\begin{proof}
 Let $x\in U$ and $\rho_0\in(0,1)$ such that $x+ky\in U$ for all $k\in\{-m\ldots,m\}$ and $y\in B_{\rho_0}(0)$.  By Lemmas \ref{mte:lemma} and \ref{calc:lemma}, there is $\theta(x,y,u)=\theta\in[0,1]$ such that \eqref{mte:eq:2} holds for $y\in B_{\rho_0}(0)$ with
 \begin{align*}
  R(x,y)=\sum_{|\alpha|=2m}\frac{1}{\alpha!} \sum_{k=-\m}^\m (-1)^k { \binom{2m}{m-k}}k^{2m} (\partial^\alpha u(x+k\theta y)-\partial^\alpha u(x))y^\alpha.
 \end{align*}
 Then, since $u\in \cC^{2m}(U)$, there is $\rho(U,x,m,u,\varepsilon)=\rho\in(0,\rho_0]$ such that
 \begin{align*}
  |R(x,y)|\leq \sum_{|\alpha|=2m}\frac{1}{\alpha!} \sum_{k=-\m}^\m { \binom{2m}{m-k}}k^{2m} \varepsilon |y^\alpha|\leq C \varepsilon|y|^{2m}\qquad \text{ for all }y\in B_{\rho}(0)
 \end{align*}
for some $C>0$ depending only on $N$ and $m$.
\end{proof}

We continue with the proof of Lemma \ref{welldef:lemma}, where we extend the arguments in \cite[Proposition 2.5]{S07}.

\begin{proof}[Proof of Lemma \ref{welldef:lemma}]

Let $U\subset \R^N$ open, $n,m\in \N$ such that $s\in(n-1,n)$ with $n\leq m$, $\sigma:=s-n+1$, fix $V\subset\subset U$ open, let $x,z\in V$, $x\neq z$, and
\begin{align*}
0 < r < \min\Big\{ |x-z| , \frac{\dist(x,\partial V)}{2m} ,  \frac{\dist(z,\partial V)}{2m}  \Big\}.
\end{align*}
In particular, for all $k\in\{1,\ldots,m\}$, $\theta\in[0,1]$, and $y\in B_r(0)$, we have that $x+ky\theta,z+ky\theta\in V$. In the following we use $C>0$ to denote possibly different constants depending at most on $N$, $m$, and $s$.

Assume first that $2\sigma+\beta\in(0,1)$, $u\in C^{2s+\beta}(U)\cap \cL^1_s$, and $y\in B_r(0)$. By Lemma \ref{mte:lemma}, there is $\theta_{x,y}\in[0,1]$ such that
 \begin{align}\label{dec1}
\delta_m u(x,y)=\sum_{|\alpha|=2n-2}\frac{1}{\alpha!} \sum_{k=-\m}^{\m} (-1)^k { \binom{2m}{m-k}}k^{2n-2} \partial^\alpha u(x+k y\theta_{x,y})y^\alpha;
\end{align}
moreover,
\begin{align}
 &|L_{m,s}u(x)-L_{m,s}u(z)|
 \leq \frac{c_{N,\m,s}}{2}\int_{\R^N} |\delta_\m u(x,y)-\delta_\m u(z,y)| |y|^{-N-2s}\ dy.\label{eq1}
\end{align}
Note that, for all $k\in\{1,\ldots,m\}$, $\theta\in[0,1]$, and $y\in B_r(0)$, we have that 
\begin{align}\label{H:est}
|x-z+ky(\theta_{x,y}-\theta_{z,y})|\leq |x-z|+2kr<(2m+1)|x-z|.
\end{align}
 Therefore, by \eqref{dec1}, \eqref{eq1}, \eqref{H:est}, and the fact that $u\in C^{2s+\beta}(U)$,
\begin{align}
\int_{\R^N\backslash B_r(0)} &\frac{|\delta_\m u(x,y)-\delta_\m u(z,y)|}{|y|^{N+2s}}\ dy\leq C\|u\|_{C^{2s+\beta}(V)} \int_{\R^N\backslash B_r(0)} \sum_{|\alpha|=2n-2} \frac{|x-z|^{2\sigma+\beta}}{|y|^{N-2(-s+n-1)}}\ dy\nonumber\\
&\hspace{2cm}=\|u\|_{C^{2s+\beta}(V)}C |x-z|^{2\sigma+\beta}r^{2(-s+n -1)}{\leq }C\|u\|_{C^{2s+\beta}(V)}|x-z|^{\beta}.\label{est2}
\end{align}
Furthermore, note that \eqref{calc} implies
\begin{align}\label{bcid}
\sum_{k=1}^{m}{\binom{2m}{m-k}}(-1)^{k+1}=\frac{1}{2}{\binom{2m}{m}}.
\end{align}
Therefore, by \eqref{bcid} and \eqref{dec1} for $n=1$ and by \eqref{calc} and \eqref{dec1} for $n>1$,
\begin{align*}
|\delta_m u(x,y)|&= |\sum_{|\alpha|=2n-2}\frac{1}{\alpha!} \sum_{k=-\m}^{\m} (-1)^k { \binom{2m}{m-k}}k^{2n-2} \partial^\alpha u(x+k\theta y)y^\alpha|\\
&=|\sum_{|\alpha|=2n-2}\frac{1}{\alpha!}
\Big(\sum_{k=1}^{\m} (-1)^k { \binom{2m}{m-k}}k^{2n-2} (\partial^\alpha u(x)-\partial^\alpha u(x+k\theta y))\\
&\qquad +\sum_{k=1}^{\m} (-1)^k { \binom{2m}{m-k}}k^{2n-2} (\partial^\alpha u(x)-\partial^\alpha u(x-k\theta y))\Big)
y^\alpha|\\
&\leq C\|u\|_{C^{2s+\beta}(V)}\sum_{|\alpha|=2n-2}\frac{2}{\alpha!} \sum_{k=1}^{\m}{ \binom{2m}{m-k}}k^{2n-2+2\sigma+\beta} |y|^{2\sigma+\beta+2n-2}\\
&= C\|u\|_{C^{2s+\beta}(V)} |y|^{2s+\beta}.
\end{align*}
Thus,
\begin{equation}\label{est1}
\begin{aligned}
\int_{B_r(0)} \frac{|\delta_\m u(x,y)-\delta_\m u(z,y)|}{|y|^{N+2s}}\ dy
&\leq C\|u\|_{C^{2s+\beta}(V)} \int_{B_r(0)} |y|^{\beta-N}\ dy\\
&=C\|u\|_{C^{2s+\beta}(V)} r^{\beta} {\leq} C\|u\|_{C^{2s+\beta}(V)} |x-z|^{\beta}.
 \end{aligned}
 \end{equation}
Then \eqref{Holder:n} follows from adding \eqref{est2} and \eqref{est1}, since $x,z\in V$ were arbitrarily chosen.

The proof for $2\sigma+\beta\geq 1$ is similar. In this case, $2s+\beta\geq 2n-1$ and, by \eqref{mte}, there is $\theta_{x,y}\in[0,1]$ such that
 \begin{align*}
\delta_m u(x,y)=\sum_{|\alpha|=2n-1}\frac{1}{\alpha!} \sum_{k=-\m}^{\m} (-1)^k { \binom{2m}{m-k}}k^{2n-1} \partial^\alpha u(x+k y\theta_{x,y})y^\alpha.
\end{align*}
Let $\gamma:=2s+\beta-2n+1\in(0,1)$, then, arguing similarly as before, we obtain that
\begin{align}
\int_{\R^N\backslash B_r(0)}&\frac{|\delta_\m u(x,y)-\delta_\m u(z,y)|}{|y|^{N+2s}}\ dy\leq C\|u\|_{C^{2s+\beta}(V)} \int_{\R^N\backslash B_r(0)} \sum_{|\alpha|=2n-1}\frac{1}{\alpha!}  \frac{|x-z|^{\gamma}}{|y|^{N+2s-2m+1}}\ dy\nonumber\\
&\qquad \qquad=\|u\|_{C^{2s+\beta}(V)}C |x-z|^{\gamma}r^{2(m-s)-1}{\leq }C\|u\|_{C^{2s+\beta}(V)}|x-z|^{\beta}.\label{est22}
\end{align}

On the other hand, 
\begin{align*}
|\delta_m u(x,y)|&= \Big|\sum_{|\alpha|=2n-1}\frac{1}{\alpha!} \sum_{k=-\m}^{\m} (-1)^k { \binom{2m}{m-k}}k^{2n-2} \partial^\alpha u(x+k\theta y)y^\alpha\Big|\\
&=\Big|\sum_{|\alpha|=2n-1}\Big(\ \frac{1}{\alpha!}
(\sum_{k=1}^{\m} (-1)^k { \binom{2m}{m-k}}k^{2n-1} (\partial^\alpha u(x)-\partial^\alpha u(x+k\theta y))\\
&\qquad +\sum_{k=1}^{\m} (-1)^k { \binom{2m}{m-k}}k^{2n-1} (\partial^\alpha u(x)-\partial^\alpha u(x-k\theta y))
y^\alpha\Big)\Big|\\
&\leq C\|u\|_{C^{2s+\beta}(V)}\sum_{|\alpha|=2n-1}\frac{2}{\alpha!} \sum_{k=1}^{\m}{ \binom{2m}{m-k}}k^{2n-1+\gamma} |y|^{\gamma+2n-1}= C\|u\|_{C^{2s+\beta}(V)} |y|^{2s+\beta},
\end{align*}
Then \eqref{est1} holds also in this case and, together with \eqref{est22}, this implies \eqref{Holder:n}. Finally, since $V\subset\subset U$ is an arbitrary open subset, \eqref{Holder:n} also holds for $V=U$, if $\|u\|_{C^{2s+\beta}(U)}$ is finite.
\end{proof}

We now show that the order of the finite differences can be reduced in some cases.
\begin{proof}[Proof of Lemma \ref{mton}]
	Fix $s\in(0,1)$ and $x\in U$.  Let
	\begin{align*}
	P_a:=\sum_{k=-a}^a (-1)^k { \binom{2a}{a-k}} k^{2s}\qquad \text{ for }a\in\N\text{ with }a>s
	\end{align*}
	and let $n,m\in\N$ such that $s<n<m$. Let $u\in C^{2s+\beta}(U)\cap \cL^1_{s}$ for some $\beta\in(0,1)$, then, by changing variables,
	\begin{align*}
	P_n\int_{\R^N}\frac{\delta_m u(x,y)}{|y|^{N+2s}}\ dy
	&=\sum_{j=-n}^n(-1)^j { \binom{2n}{n-j}} \int_{\R^N}\frac{ \delta_m u(x,jy)}{|y|^{N+2s}}\ dy\\
	&= \int_{\R^N}\frac{ \sum_{j=-n}^n\sum_{k=-m}^m(-1)^k { \binom{2m}{m-k}} (-1)^j { \binom{2n}{n-j}} u(x+kjy)}{|y|^{N+2s}}\ dy\\
	&=\sum_{k=-m}^m(-1)^k { \binom{2m}{m-k}} \int_{\R^N}\frac{ \delta_n u(x,ky)}{|y|^{N+2s}}\ dy=P_m\int_{\R^N}\frac{\delta_n u(x,y)}{|y|^{N+2s}}\ dy.
	\end{align*}
	Since $c_{N,\m,s}\frac{P_{m}}{P_{n}}=c_{N,n,s}$, we have that
	\begin{align*}
	L_{m,s}u(x)=\frac{c_{N,\m,s}}{2}\int_{\R^N} \frac{\delta_\m u(x,y)}{|y|^{N+2s}} \ dy=\frac{c_{N,\m,s}}{2}\frac{P_{m}}{P_{n}}\int_{\R^N} \frac{\delta_n u(x,y)}{|y|^{N+2s}} \ dy=L_{n,s}u(x),
	\end{align*}
	as claimed.
\end{proof}

\subsection{Integration by parts formulas}

\begin{proof}[Proof of Lemma \ref{ibyp}]
Let $m\in \N$, $\beta\in(0,1)$, $s\in(0,m)$, $U\subset\R^N$ open, $u\in C^{2s+\beta}(U)\cap \cL^1_s$, and $\varphi\in \cC^\infty_c(U)$. By Fubini's theorem and changes of variables,
 \begin{align*}
\int_{\R^N} L_{m,s}u(x)\varphi(x)&\ dx=\frac{c_{N,\m,s}}{2}\int_{\R^N}\sum_{k=-\m}^\m (-1)^k { \binom{2m}{m-k}}|y|^{-N-2s}\int_{\R^N} u(x+ky)\varphi(x) \ dx\ dy\\
&=\frac{c_{N,\m,s}}{2}\int_{\R^N}\sum_{k=-\m}^\m (-1)^k { \binom{2m}{m-k}}|y|^{-N-2s}\int_{\R^N} u(x)\varphi(x+ky) \ dx\ dy\\
&=\frac{c_{N,\m,s}}{2}\int_{\R^N}u(x)\int_{\R^N} \frac{\sum_{k=-\m}^\m (-1)^k { \binom{2m}{m-k}} \varphi(x+ky)}{|y|^{N+2s}} \ dy\ dx=\int_{\R^N} u(x)L_{m,s} \varphi(x)\ dx.
\end{align*}
\end{proof}

We now present a simple integration by parts argument, see \cite[Lemma 2]{TD16} for a similar result.
\begin{lemma}\label{dibyp}
 Let $s>0$, $n,m\in\N$, $s<m$, and $u,v\in \cC^\infty_c(\R^N)$. Then,
 \begin{align}\label{eq:dib:1}
  \int_{\R^N}\int_{\R^N}\frac{\delta_{n}u(x,y)\delta_mv(x,y)}{|y|^{N+2s}}\ dx\, dy
  =\int_{\R^N}\int_{\R^N}\frac{\delta_{n+m}u(x,y)v(x)}{|y|^{N+2s}}\ dx\, dy.
 \end{align}
 Moreover, 
 \begin{align}\label{eq:dib:2}
  \int_{\R^N}\int_{\R^N}\frac{u(x)\delta_1v(x,y)}{|y|^{N+2s}}\ dx\, dy
  =\int_{\R^N}\int_{\R^N}\frac{(u(x)-u(x+y))(v(x)-v(x+y))}{|y|^{N+2s}}\ dx\, dy.
 \end{align}
\end{lemma}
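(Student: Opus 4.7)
The plan is to reduce each identity to a pointwise algebraic statement (for each fixed $y$) via two elementary building blocks, then integrate against $|y|^{-N-2s}\,dy$ using Fubini.

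For \eqref{eq:dib:1}, I would first establish the \emph{semigroup property} in the $x$-variable: for every fixed $y\in\R^N$,
\[
\delta_m\!\big[\delta_n u(\cdot,y)\big](x,y) \;=\; \delta_{n+m} u(x,y).
\]
This follows by iterating Lemma~\ref{iteration}: since $\delta_{\ell+1}=\delta_\ell\circ\delta_1$ and the base case $\delta_1\circ\delta_1=\delta_2$ is a direct check, induction gives $\delta_n\circ\delta_m=\delta_{n+m}$. Alternatively, writing $\delta_m u(x,y)=P_m(T_y)u(x)$ for the translation operator $T_y$ and the polynomial $P_m(\lambda)=(-1)^m\lambda^{-m}(1-\lambda)^{2m}$, the identity $P_nP_m=P_{n+m}$ is transparent. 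The second ingredient is a \emph{discrete self-adjointness}: the change of variables $x\mapsto x-ky$ together with the symmetry $\binom{2m}{m-k}=\binom{2m}{m+k}$ yields
\[
\int_{\R^N}\delta_m u(x,y)\,v(x)\,dx \;=\; \int_{\R^N}u(x)\,\delta_m v(x,y)\,dx.
\]
Applying this with $\delta_n u(\cdot,y)$ in place of $u$, then using the semigroup property, gives pointwise in $y$
\[
\int_{\R^N}\delta_n u(x,y)\,\delta_m v(x,y)\,dx \;=\; \int_{\R^N}\delta_m\!\big[\delta_n u(\cdot,y)\big](x,y)\,v(x)\,dx \;=\; \int_{\R^N}\delta_{n+m}u(x,y)\,v(x)\,dx,
\]
and \eqref{eq:dib:1} follows upon dividing by $|y|^{N+2s}$ and integrating in $y$.

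For \eqref{eq:dib:2}, I would simply expand the product and use translation invariance in $x$. Writing
\[
(u(x)-u(x+y))(v(x)-v(x+y))=u(x)v(x)-u(x)v(x+y)-u(x+y)v(x)+u(x+y)v(x+y),
\]
the substitution $x\mapsto x-y$ transforms $\int u(x+y)v(x)\,dx$ into $\int u(x)v(x-y)\,dx$, while $\int u(x+y)v(x+y)\,dx=\int u(x)v(x)\,dx$. Hence for each fixed $y$,
\[
\int_{\R^N}(u(x)-u(x+y))(v(x)-v(x+y))\,dx \;=\; \int_{\R^N}u(x)\big[2v(x)-v(x+y)-v(x-y)\big]\,dx \;=\; \int_{\R^N}u(x)\,\delta_1 v(x,y)\,dx,
\]
and dividing by $|y|^{N+2s}$ and integrating in $y$ yields \eqref{eq:dib:2}.

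The main obstacle is really only bookkeeping: the semigroup identity $\delta_n\circ\delta_m=\delta_{n+m}$ is what makes the argument work and should be stated explicitly. Beyond that, the use of Fubini must be justified, but this is routine given $u,v\in\cC_c^\infty$: near $y=0$, Lemma~\ref{mte:lemma} provides $\delta_\ell w(x,y)=O(|y|^{2\ell})$ uniformly, so $\delta_n u\cdot\delta_m v$ is $O(|y|^{2(n+m)})$ near the origin, which dominates $|y|^{-N-2s}$ since $s<m\le n+m$; at infinity, compact support ensures $\int_{\R^N}|\delta_n u(x,y)\,\delta_m v(x,y)|\,dx$ remains bounded in $y$, and the weight $|y|^{-N-2s}$ is integrable at infinity for $s>0$. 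The analogous bounds hold for \eqref{eq:dib:2}, making all exchanges of integration order legitimate.
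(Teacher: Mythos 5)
Your argument is correct and essentially the paper's own: the paper proves the self-adjointness step for $\delta_1$ under the weighted double integral by the same change of variables and then obtains \eqref{eq:dib:1} by iterating via \eqref{iteration1}, which is exactly your combination of the semigroup identity $\delta_n\circ\delta_m=\delta_{n+m}$ with the (equivalent) self-adjointness of $\delta_m$, while \eqref{eq:dib:2} comes from the same expansion-and-translation computation. Your explicit statement of the semigroup property and of the Fubini/absolute-convergence bounds merely spells out details the paper leaves implicit.
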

\begin{proof}
Observe that, by a change of variables
\begin{align}
  \int_{\R^N}\int_{\R^N}&\frac{u(x)\delta_1v(x,y)}{|y|^{N+2s}}\ dx\, dy=\int_{\R^N}\int_{\R^N}\frac{u(x)(2v(x)-v(x-y)-v(x+y))}{|y|^{N+2s}}\ dx\, dy\nonumber\\
  &=\int_{\R^N}|y|^{-N-2s}\int_{\R^N}2v(x)u(x)\ dx-\int_{\R^N}u(x)v(x-y)\ dx-\int_{\R^N}u(x)v(x+y)\ dx\, dy\label{eq:dib:3}\\
  &=\int_{\R^N}\int_{\R^N}\frac{(2u(x)-u(x-y)-u(x+y))v(x)}{|y|^{N+2s}}\ dx\, dy=\int_{\R^N}\int_{\R^N}\frac{\delta_1u(x,y)v(x)}{|y|^{N+2s}}\ dx\, dy,\nonumber
  \end{align}
and \eqref{eq:dib:1} now follows from \eqref{iteration1}. Note that \eqref{eq:dib:2} also follows from \eqref{eq:dib:3} by a suitable change of variables.
 \end{proof}

\begin{lemma}[Lemma 2.4 in \cite{AJS17}]\label{ibyp:ours}
Let $U\subset \R^N$ be an open bounded set with Lipschitz boundary, $\beta,\sigma\in(0,1)$, $n\in\N_0$, $s=n+\sigma$, $u\in C^{2s+\beta}(U)\cap\cL^1_\sigma$. Then
\begin{align*}
\int_{\R^N} u\, (-\Delta)^m(-\Delta)^\sigma \varphi \ dx = \int_{\R^N} \varphi\,(-\Delta)^{m}(-\Delta)^\sigma u\ dx \qquad \text{for all $\varphi\in \cC^\infty_c(U)$.}
\end{align*}
Moreover, if $u\in \cH_0^s(U)$ then 
\begin{align*}
\int_{\R^N} u\, (-\Delta)^m(-\Delta)^\sigma\varphi\ dx = \cE_s(u,\varphi)\qquad \text{ for all }\varphi\in \cC_c^\infty(U).
\end{align*}
\end{lemma}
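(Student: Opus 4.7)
The plan is to reduce both identities to tools already developed earlier in the excerpt: Proposition \ref{interchange} (which allows one to swap derivatives and $(-\Delta)^\sigma$ on smooth compactly supported functions), Lemma \ref{ibyp} specialized to order one and exponent $\sigma$ (integration by parts for $(-\Delta)^\sigma$), the classical integration by parts for $(-\Delta)^n$, and, for the second identity, the Fourier characterization \eqref{fourier} of $\cE_s$.

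For the first identity, I would fix $\varphi\in\cC^\infty_c(U)$ and set $\psi:=(-\Delta)^n\varphi$, which again lies in $\cC^\infty_c(U)$. Proposition \ref{interchange} yields $(-\Delta)^n(-\Delta)^\sigma\varphi=(-\Delta)^\sigma(-\Delta)^n\varphi=(-\Delta)^\sigma\psi$, so the left-hand side equals $\int_{\R^N} u\,(-\Delta)^\sigma\psi\,dx$. Since $u\in C^{2s+\beta}(U)\subset C^{2\sigma+\beta}(U)$ and $u\in\cL^1_\sigma$, Lemma \ref{ibyp} applies and transfers $(-\Delta)^\sigma$ from $\psi$ onto $u$, yielding $\int_{\R^N} (-\Delta)^\sigma u\cdot(-\Delta)^n\varphi\,dx$. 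A classical integration by parts then moves the $2n$ derivatives back onto $\varphi$, producing the right-hand side.

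The main technical obstacle is verifying the regularity required to justify this last classical integration by parts: one needs $(-\Delta)^\sigma u$ to be of class $\cC^{2n}$ on a neighborhood of $\supp\varphi$. I would establish this by a cutoff-and-split argument. Pick $\chi\in\cC^\infty_c(U)$ with $\chi\equiv 1$ on an open neighborhood $W$ of $\supp\varphi$, and decompose $u=\chi u+(1-\chi)u$. The local piece $\chi u$ belongs to $C^{2s+\beta}(\R^N)$ with compact support; differentiating the hypersingular integral under the sign of integration and applying Lemma \ref{welldef:lemma} to each derivative $\partial^\alpha(\chi u)$ with $|\alpha|\leq 2n$ shows $(-\Delta)^\sigma(\chi u)\in\cC^{2n}(W)$. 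The nonlocal piece $(1-\chi)u$ vanishes on $W$, so on $W$ the integrand defining $(-\Delta)^\sigma((1-\chi)u)(x)$ has a smooth, singularity-free kernel in $x$, bounded uniformly for $y$ in $\R^N\setminus W$; repeated differentiation under the integral sign, together with $u\in\cL^1_\sigma$, gives $(-\Delta)^\sigma((1-\chi)u)\in\cC^\infty(W)$. Summing the two contributions yields the required regularity.

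For the second identity, assume $u\in\cH^s_0(U)\subset H^s(\R^N)$. Since $\varphi\in\cC^\infty_c(\R^N)$ has Schwartz Fourier transform and $\cF[(-\Delta)^n(-\Delta)^\sigma\varphi](\xi)=|\xi|^{2s}\cF\varphi(\xi)$, Plancherel's identity yields
\begin{align*}
\int_{\R^N} u\,(-\Delta)^n(-\Delta)^\sigma\varphi\,dx
=\int_{\R^N}|\xi|^{2s}\,\cF u(\xi)\,\overline{\cF\varphi(\xi)}\,d\xi
=\cE_s(u,\varphi),
\end{align*}
where the final equality is \eqref{fourier}. Integrability of the Fourier-side integrand is ensured by $|\xi|^{s}\cF u\in L^2(\R^N)$ (from $u\in H^s(\R^N)$) together with the Schwartz decay of $\cF\varphi$.
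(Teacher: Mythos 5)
Your proof is correct; note, however, that this is a statement the paper itself never proves --- it is imported verbatim as Lemma 2.4 of \cite{AJS17} --- so there is no internal argument to compare against, and what you have produced is a legitimate self-contained derivation from the paper's own toolkit. Reading the stray $m$ in the statement as $n$ (the only reading consistent with $s=n+\sigma$ and with how the lemma is used in Corollary \ref{e:d:cor}), you reduce the first identity to Proposition \ref{interchange} applied to $\varphi$, to Lemma \ref{ibyp} with order one and exponent $\sigma$ (admissible since $C^{2s+\beta}(U)\subset C^{2\sigma+\beta}(U)$ and $u\in\cL^1_\sigma$), and to a classical integration by parts; the only genuinely technical point --- that $(-\Delta)^\sigma u\in\cC^{2n}$ near $\supp\varphi$, so that the right-hand side is meaningful and the final integration by parts is justified --- is exactly your cutoff-and-split argument, and it goes through: for the local piece, $\partial^\alpha(\chi u)\in C^{2\sigma+\beta}(\R^N)$ has compact support for $|\alpha|\le 2n$, its second differences are $O(\min\{|y|^{\gamma},1\})$ with $\gamma>2\sigma$, which dominates the differentiated integrand and permits passing $\partial^\alpha$ under the hypersingular integral, while Lemma \ref{welldef:lemma} gives continuity of $(-\Delta)^\sigma\partial^\alpha(\chi u)$; for the far piece, $\supp((1-\chi)u)$ lies at positive distance from every point of $W$, so the kernel is smooth there and $u\in\cL^1_\sigma$ provides the domination. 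Two cosmetic remarks: the identity $\cF\big[(-\Delta)^n(-\Delta)^\sigma\varphi\big]=|\cdot|^{2s}\cF\varphi$ can simply be cited from the paper's Theorems \ref{new:main:thm} and \ref{main:thm} together with Proposition \ref{interchange}, rather than asserted; and your second step only uses $u\in H^s(\R^N)$, so the hypothesis $u\in\cH^s_0(U)$ is not actually needed there, consistent with \eqref{bilin:def} and \eqref{fourier} being defined on all of $H^s(\R^N)$.
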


\subsection{Known identities}
We use the following definitions. Let $\cL(f)$ denote the Laplace transform and $\Gamma$ the Gamma function, then
\begin{align}
 \Gamma(\rho):=\int_0^\infty w^{x-1}e^{-w}\ dw\quad \text{ and }\quad \cL(f)(\rho):=\int_0^\infty f(t)e^{-\rho t} dt,\qquad \rho>0\label{Gamma:def}
\end{align}
for $f\in L^\infty(\R^N)$.   We also use the next known identities, for $s,a,b>0$, $N,m,n\in\N$, $\sigma\in(0,1)$, and $s=n+\sigma$, we have that
\begin{align}
 \cL(\sin^{2\m})(a)& = \frac{(2\m)!}{a \prod_{k=1}^{\m}(4k^2+a^2)},\label{Lapl:id}\\
 \int_0^{\infty} \frac{\rho^{\sigma-1}}{1+\rho}\ d\rho&=\frac{\pi}{\sin(\pi \sigma)}=\frac{(-1)^{n-1}\pi}{\sin(\pi s)}=(-1)^{n-1}\Gamma(s)\Gamma(1-s),\label{gamma:id:2}\\
 \int_{0}^{\infty}\frac{\rho^{N-2}}{(1+\rho^2)^{\frac{N+2s}{2}}}\ d\rho&=\frac{\Gamma(\frac{N-1}{2})\Gamma(s+\frac{1}{2})}{2 \Gamma(\frac{N}{2}+s)},\label{beta:id}\\
 \int_0^{2\pi}\sin^{2a}(t)\cos^{2b}(t)\ dt&=2\frac{\Gamma(\frac{1}{2}+a)\Gamma(\frac{1}{2}+b)}{\Gamma(1+a+b)}
 =\frac{2^{1-2(a+b)}\pi \Gamma(2a+1)\Gamma(2b+1)}{\Gamma(a+1)\Gamma(b+1)\Gamma(1+a+b)},\label{betaid}\\
 \int_0^\pi \sin^{2a}(t)\cos^{2b}(t)\ dt&=\frac{\Gamma(\frac{1}{2}+a)\Gamma(\frac{1}{2}+b)}{\Gamma(1+a+b)}
=\frac{\Gamma(\frac{1}{2}+a)4^{-b}\pi^\frac{1}{2}(2b)!}{\Gamma(1+a+b)b!},\qquad b\in\N_0,\label{betaid2}\\
a\Gamma(a)=\Gamma(a+1),\quad &\text{ and }\quad \Gamma(\frac{1}{2}+a)=2^{1-2a}\pi^\frac{1}{2}\frac{\Gamma(2a)}{\Gamma(a)}\label{Gammaid}.
\end{align}
Identities \eqref{beta:id} and \eqref{gamma:id:2} are particular cases of \cite[page 10, formula (16)]{EMOT81}, while \eqref{Gammaid} can be retrieved in \cite[page 3, formula (1) and page 5, formula (15)]{EMOT81}. Identity \eqref{betaid2} is a consequence of \eqref{betaid}, which can be deduced from \cite[page 10, formula (17)]{EMOT81} after a change of variable. Finally, for identity \eqref{Lapl:id} we refer to \cite[page 150, formula (3)]{EMOT54}.

\section{Equivalence of evaluations} \label{Equiv:sec}
\begin{proof}[Proof of Theorem \ref{new:main:thm}]
Let $n\in\N$, $\sigma\in(0,1)$, and $s=m+\sigma$.  Observe that, for $y\in\R^N\backslash\{0\}$,
\begin{align*}
-c_{N,1,\sigma}(-\Delta)^n &{|y|^{-N-2\sigma}}=c_{N,1,\sigma}(-1)^{n+1}\prod\limits_{i=0}^{n-1}(N+2\sigma+2i)(2\sigma+2(i+1)){|y|^{-N-2\sigma-2n}}\\
&=\ (-1)^{n}\frac{2^{2\sigma}\,\Gamma(N/2+\sigma)}{\pi^{N/2}\,\Gamma(-\sigma)}\prod\limits_{i=0}^{n-1}(N+2\sigma+2i)(2\sigma+2(i+1))|y|^{-N-2\sigma-2n}\\
&= (-1)^{n}\frac{2^{2s}\,\Gamma(N/2+\sigma)}{\pi^{N/2}\,\Gamma(-\sigma)}\prod\limits_{i=0}^{n-1}(N/2+\sigma+i)(\sigma+i+1) |y|^{-N-2\sigma-2n}\\
&= \frac{2^{2s}\,\Gamma(N/2+s)}{\pi^{N/2}\,\Gamma(-\sigma)}\prod\limits_{i=0}^{n-1}(-\sigma-i-1) |y|^{-N-2\sigma-2n}
= \frac{4^{s}\,\Gamma(\frac{N}{2}+s)}{\pi^{N/2}\,\Gamma(-s)}|y|^{-N-2\sigma-2n}.
\end{align*}
Therefore,
\begin{align}\label{art:1}
 |y|^{-N-2\sigma-2n}=\frac{-c_{N,1,\sigma}\pi^{N/2}\,\Gamma(-s)}{4^{s}\,\Gamma(\frac{N}{2}+s)}(-\Delta)^n|y|^{-N-2\sigma}\qquad \text{ for }y\neq 0.
\end{align}
Let $U\subset \R^N$ open, $u\in C^{2s+\beta}(U)$, $(-\Delta)^n u\in\cL^1_{\sigma}$, $(-\Delta)^iu\in\cL^1_{s-i-\frac{1}{2}}$, and 
$|\nabla(-\Delta)^i u|\in\cL^1_{s-i-1}$ for $i\in\{0,\ldots n-1\}$.
Then, there is $r_j\to\infty$ as $j\to\infty$ such that
\begin{equation}\label{boundaries}
\begin{aligned}
 \lim_{j\to\infty}\int_{\partial B_{r_j}(0)}|(-\Delta)^i u(y)| \partial_\nu (-\Delta)^{n-i-1}|y|^{-N-2\sigma}\ dt&=k_{i}\lim_{j\to\infty}\int_{\partial B_{r_j}(0)}\frac{|(-\Delta)^i u(y)|}{|y|^{N+2s-2i-1}}\ dt =0,\\
\lim_{j\to\infty}\int_{\partial B_{r_j}(0)}|\partial_\nu(-\Delta)^i u(y)| (-\Delta)^{n-i-1}|y|^{-N-2\sigma}\ dt&=k_{i}'\lim_{j\to\infty}\int_{\partial B_{r_j}(0)}\frac{|\partial_\nu(-\Delta)^i u(y)|}{|y|^{N+2s-2i-2}}\ dt =0
\end{aligned}
\end{equation}
for $i\in\{0,\ldots, n-1\}$ and some positive constants $k_{i}$ and $k'_{i}$. These limits ensure that the boundary terms from the integration by parts performed below vanish. To shorten notation, let $P:= \sum_{k=1}^\m (-1)^k { \binom{2m}{m-k}}k^{2s}$.  Then, using \eqref{art:1}, \eqref{calc}, integration by parts, \eqref{boundaries}, and change of variables, we have, for $x\in U$,
\begin{align*}
L_{m,s}u(x)&=\frac{c_{N,m,s}}{2}\int_{\R^N}\frac{\delta_m u(x,y)}{|y|^{N+2s}}\ dy=\frac{4^{s}\Gamma(\frac{N}{2}+s)}{ \pi^{\frac{N}{2}}\Gamma(-s) 2P}\int_{\R^N}\frac{\delta_m u(x,y)}{|y|^{N+2s}}\ dy\\
&=-\frac{c_{N,1,\sigma}}{2P}\lim_{j\to\infty}\int_{B_{r_j}(0)}\delta_m u(x,y)(-\Delta)^n|y|^{-N-2\sigma}\ dy\\
&=-\frac{c_{N,1,\sigma}}{P}\lim_{j\to\infty}\int_{B_{r_j}(0)}(-\Delta)_y^n\delta_m u(x,y)|y|^{-N-2\sigma}\ dy\\
&=-\frac{c_{N,1,\sigma}}{2P}\int_{\R^N} \sum_{k=-\m}^\m (-1)^k { \binom{2m}{m-k}}k^{2n} \frac{(-\Delta)^nu(x+ky)}{|y|^{N+2\sigma}}\ dy\\
&=-\frac{c_{N,1,\sigma}}{2P}\int_{\R^N} \sum_{k=-\m}^\m (-1)^k { \binom{2m}{m-k}}k^{2n} \frac{(-\Delta)^nu(x+ky)-(-\Delta)^nu(x)}{|y|^{N+2\sigma}}\ dy\\
&=-\frac{c_{N,1,\sigma}}{2P}\int_{\R^N} \sum_{k=-\m}^\m (-1)^k { \binom{2m}{m-k}}k^{2s} \frac{(-\Delta)^nu(x+y)-(-\Delta)^nu(x)}{|y|^{N+2\sigma}}\ dy\\
&=c_{N,1,\sigma}\int_{\R^N} \frac{(-\Delta)^nu(x)-(-\Delta)^nu(x+y)}{|y|^{N+2\sigma}}\ dy = (-\Delta)^\sigma(-\Delta)^nu(x),
  \end{align*}
  as claimed.
\end{proof}

Before we proceed to the proof of Corollary \ref{e:d:cor}, we recall a result on interchange of derivatives.
\begin{prop}[Proposition B.2 in \cite{AJS16}]\label{interchange}
Let $\Omega\subset \R^N$ open, $\sigma\in (0,1)$, and $u\in C^{3}(\Omega)\cap\cL^1_{\sigma}\cap W^{1,1}_{loc}(\mathbb R^N)$. If $\partial_1 u\in \cL^1_{\sigma}$, then $\partial_1(-\Delta)^\sigma u(x) = (-\Delta)^\sigma \partial_1u(x)$ pointwisely for all $x\in \Omega$.
\end{prop}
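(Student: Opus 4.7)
The plan is to differentiate the singular-integral representation of $(-\Delta)^\sigma u$ under the integral sign. Fix $x \in \Omega$ and pick $h_0 > 0$ small enough that the segment $[x-h_0 e_1, x+h_0 e_1]$ lies in a compact set $K \subset \Omega$. For $|h| < h_0$, applying the fundamental theorem of calculus to each of the four translates of $u$ that make up $\delta_1$ (using $u \in W^{1,1}_{loc}(\R^N)$ and Fubini to handle the translates whose segment leaves $\Omega$) gives
\begin{equation*}
\delta_1 u(x+he_1, y) - \delta_1 u(x, y) = \int_0^h \delta_1(\partial_1 u)(x+te_1, y)\, dt \quad \text{for a.e. } y \in \R^N.
\end{equation*}

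The key step is to invoke Fubini in order to recast the difference quotient as $\frac{1}{h}\int_0^h (-\Delta)^\sigma \partial_1 u(x+te_1)\,dt$. This requires $(t,y) \mapsto \delta_1(\partial_1 u)(x+te_1,y)/|y|^{N+2\sigma}$ to belong to $L^1([0,h]\times\R^N)$, which I would verify with a near/far split at some $r > 0$ with $K + B_r \subset \Omega$. Near the origin, $\partial_1 u \in C^2(K + B_r)$ forces the first-order terms in the Taylor expansion of $\delta_1(\partial_1 u)(x', \cdot)$ to cancel, yielding $|\delta_1(\partial_1 u)(x', y)| \leq C_K |y|^2$ uniformly for $x' \in K$ and $|y| < r$; this is integrable against $|y|^{-N-2\sigma}$ because $\sigma < 1$. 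On $|y| \geq r$, the crude bound
\begin{equation*}
|\delta_1(\partial_1 u)(x', y)| \leq 2|\partial_1 u(x')| + |\partial_1 u(x'+y)| + |\partial_1 u(x'-y)|
\end{equation*}
suffices: the first summand is bounded on $K$, and for $x' \in K$ and $|y| \geq r$ the weight $|y|^{-N-2\sigma}$ is comparable to $(1 + |x' \pm y|^{N+2\sigma})^{-1}$, so the last two summands are controlled by $\|\partial_1 u\|_{\cL^1_\sigma}$.

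Once Fubini is justified, the averaged identity holds. To take $h \to 0$, note that $\partial_1 u \in C^2(\Omega) \subset C^{2\sigma + \beta}(\Omega)$ for any $\beta \in (0, 2(1-\sigma))$, together with the hypothesis $\partial_1 u \in \cL^1_\sigma$; Lemma~\ref{welldef:lemma} applied with $m=1$, $s=\sigma$ then puts $(-\Delta)^\sigma \partial_1 u$ in $C^\beta_{loc}(\Omega)$, so it is continuous at $x$ and the average converges to $(-\Delta)^\sigma \partial_1 u(x)$. This simultaneously shows that $\partial_1 (-\Delta)^\sigma u(x)$ exists and equals $(-\Delta)^\sigma \partial_1 u(x)$.

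The main obstacle is the Fubini exchange, which relies on both hypotheses in an essential way: the tail control $|y| \geq r$ fails without $\partial_1 u \in \cL^1_\sigma$, and the quadratic cancellation of $\delta_1(\partial_1 u)$ near the origin fails without the $C^3$ regularity of $u$. The $W^{1,1}_{loc}$ assumption plays a separate and more modest role, namely to legitimize the pointwise fundamental theorem of calculus identity above for almost every $y$, including those for which some segment $[x+y, x+y+he_1]$ exits $\Omega$.
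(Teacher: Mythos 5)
This proposition is not proved in the paper at all: it is imported verbatim as Proposition B.2 of \cite{AJS16}, so there is no in-paper argument to compare against, and your proof has to stand on its own. It does. The averaged identity
$(-\Delta)^\sigma u(x+he_1)-(-\Delta)^\sigma u(x)=\int_0^h(-\Delta)^\sigma\partial_1u(x+te_1)\,dt$, obtained from the fundamental theorem of calculus along lines (the ACL property of the $W^{1,1}_{loc}$ representative handling the translates $x\pm y+te_1$ that leave $\Omega$, for a.e.\ $y$) plus Fubini, is exactly the right mechanism, and your near/far split justifying Fubini is sound: the quadratic bound $|\delta_1(\partial_1u)(x',y)|\le C_K|y|^2$ uniformly for $x'$ in a compact neighborhood of the segment, integrable against $|y|^{-N-2\sigma}$ since $\sigma<1$, and the tail controlled by $\|\partial_1u\|_{\cL^1_\sigma}$ because $|y|^{-N-2\sigma}\lesssim(1+|x'\pm y|^{N+2\sigma})^{-1}$ for $|y|\ge r$, $x'\in K$. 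Passing $h\to0$ via continuity of $(-\Delta)^\sigma\partial_1u$ from Lemma \ref{welldef:lemma} (with $m=1$, $s=\sigma$, $2\sigma+\beta<2$) is legitimate and gives both existence of $\partial_1(-\Delta)^\sigma u(x)$ and the identity, which is in the same spirit as the difference-quotient argument of \cite{AJS16} but packaged more cleanly through the averaged identity. Two small points to phrase carefully: in this paper's notation $C^3(\Omega)$ means $\cC^2(\Omega)$ with locally Lipschitz second derivatives, so the cancellation bound on $\delta_1(\partial_1u)$ should be derived from the integral form
$\delta_1(\partial_1u)(x',y)=-\int_0^1\bigl(\nabla\partial_1u(x'+ty)-\nabla\partial_1u(x'-ty)\bigr)\cdot y\,dt$
and the Lipschitz bound on $\nabla\partial_1u$, rather than a genuine second-order Taylor expansion (your estimate survives unchanged); and the a.e.\ FTC identity should be stated for the precise (ACL) representative of $u$, which is harmless since all integrals in $y$ are insensitive to modifications on null sets.
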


\begin{proof}[Proof of Corollary \ref{e:d:cor}] 
Let $u\in \cL^1_s\cap C^{2s+\beta}(U)$ and  $\varphi\in C^\infty_c(\R^N)$, then, by Lemmas \ref{ibyp} and \ref{new:main:thm},
\begin{align}\label{basis}
 \int_{\R^N} L_{m,s} u(x)\varphi(x)\ dx
 =\int_{\R^N} u(x)  L_{m,s} \varphi(x)\ dx
 =\int_{\R^N} u(x) (-\Delta)^\sigma(-\Delta)^n \varphi(x)\ dx=:E.
\end{align}
Claim $(a)$ now follows from \eqref{basis}, Lemma \ref{ibyp:ours}, a standard integration by parts using that $\varphi$ has compact support, and the fundamental lemma of calculations of variations. For claim $(b)$, since $u\in\cH^s_0(U)$, we have, by Proposition \ref{interchange} and Lemma \ref{ibyp:ours}, that
\begin{align*}
E=\int_{\R^N} u(x) (-\Delta)^\frac{n}{2}(-\Delta)^\sigma(-\Delta)^\frac{n}{2}\varphi(x)\ dx=\int_{\R^N} (-\Delta)^\frac{n}{2}(-\Delta)^\sigma(-\Delta)^\frac{n}{2} u(x) \varphi(x)\ dx
\end{align*}
and the claim follows similarly from \eqref{basis}. Claim $(c)$ can be argued analogously.
\end{proof}

\subsection{Asymptotic analysis}

We now study the asymptotic behaviour of \eqref{cNms:def}.

\begin{lemma}\label{a:lemma}
Let $N,m\in \N$, $s\in(0,m)$, and $c_{N,m,s}$ as in \eqref{cNms:def}. Then $s\mapsto c_{N,m,s}>0$ is continuous in $(0,m)$. Moreover,
\begin{align*}
\lim_{s\to 0^+}\frac{c_{N,m,s}}{s}&=\frac{2(m!)^2\Gamma(\frac{N}{2})}{(2m)!\pi^{\frac{N}{2}}}>0\qquad
\text{ and }\qquad \lim_{s\to m^-}\frac{c_{N,m,s}}{m-s}=\frac{2^{2m+1}m!\Gamma(\frac{N}{2}+m)}{(2m)!\pi^{\frac{N}{2}}} >0.
\end{align*}	 
\end{lemma}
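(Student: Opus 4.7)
The plan is to analyze the formula \eqref{cNms:def} one factor at a time, writing, for $s\in(0,m)\setminus\N$,
\[c_{N,m,s} = \frac{4^s\,\Gamma(N/2+s)}{\pi^{N/2}\,\Gamma(-s)\,P(s)},\qquad P(s):=\sum_{k=1}^{m}(-1)^k\binom{2m}{m-k}k^{2s}.\]
Since $4^s$ and $\Gamma(N/2+s)$ are smooth and positive on $(0,m)$, all the work concerns $[\Gamma(-s)\,P(s)]^{-1}$. Note that $\Gamma(-s)$ has simple poles precisely at the integers, and by Lemma~\ref{calc:lemma} $P$ vanishes exactly at $s=0,1,\dots,m$; these singularities will be shown to cancel pairwise.

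For the boundary asymptotics I would use the identity $1/\Gamma(-s) = -s/\Gamma(1-s)$ as $s\to 0^+$, which absorbs the singularity and gives $c_{N,m,s}/s\to -\Gamma(N/2)/[\pi^{N/2}\,P(0)]$. Lemma~\ref{calc:lemma} with $n=0$ provides $P(0) = -\tfrac{1}{2}\binom{2m}{m}$, simplifying to the first claimed limit. As $s\to m^-$, I would use the residue expansion $\Gamma(-s)\sim (-1)^m/[m!(m-s)]$ combined with $P(m)=(-1)^m(2m)!/2$ (again from Lemma~\ref{calc:lemma}); a direct computation yields the second limit.

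Continuity on $(0,m)\setminus\N$ is immediate once one knows $P(s)\neq 0$ there (justified via the positivity argument below). The delicate case is $s=n\in\{1,\dots,m-1\}$, where both $1/\Gamma(-s)$ and $P(s)$ have simple zeros. Taylor expanding,
\[\frac{1}{\Gamma(-s)} = (-1)^n n!(n-s)+O((n-s)^2),\qquad P(s) = P'(n)(s-n)+O((s-n)^2),\]
with $P'(n) = 2\sum_{k=2}^{m}(-1)^k\binom{2m}{m-k}k^{2n}\ln k$ (the $k=1$ term drops because $\ln 1=0$), one obtains $1/[\Gamma(-s)\,P(s)]\to (-1)^{n+1}n!/P'(n)$. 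Since $(-1)^{k+n+1}=(-1)^{k-n+1}$, moving the sign inside the sum recovers exactly the integer formula in \eqref{cNms:def}, so continuity holds at each integer and hence throughout $(0,m)$.

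Finally, positivity follows by testing $L_{m,s}$ against the plane wave $u(x):=e^{i\xi\cdot x}$. By Lemma~\ref{iteration}, $\delta_m u(x,y)=2^m(1-\cos(\xi\cdot y))^m u(x)$, so
\[L_{m,s}u(x) = c_{N,m,s}\,u(x)\cdot 2^{m-1}\int_{\R^N}\frac{(1-\cos(\xi\cdot y))^m}{|y|^{N+2s}}\,dy,\]
where the integral converges (the integrand is $O(|y|^{2m-N-2s})$ at the origin and $O(|y|^{-N-2s})$ at infinity, with $s<m$) and is strictly positive. Theorem~\ref{main:thm} forces this to equal $|\xi|^{2s}u(x)$, so $c_{N,m,s}>0$; this in turn rules out $P(s)=0$ at non-integer $s$, closing the gap in the continuity argument. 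The main obstacle is purely bookkeeping: matching the limit at integer $s$ against the piecewise definition requires careful tracking of the alternating sign $(-1)^{k-n+1}$, but no genuinely nontrivial estimate is needed.
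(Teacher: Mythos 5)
Your computations for the continuity at interior integers and for the two boundary limits are correct and follow essentially the same route as the paper: you cancel the simple zero of $1/\Gamma(-s)$ at $s=n$ against the simple zero of $P(s):=\sum_{k=1}^m(-1)^k\binom{2m}{m-k}k^{2s}$ (the paper phrases this as the difference quotient $\frac{P(s)-P(n)}{n-s}$ rather than a Taylor expansion, which is the same thing), and the limits at $0^+$ and $m^-$ use exactly the same ingredients, namely $P(0)=-\tfrac12\binom{2m}{m}$, $P(m)=(-1)^m(2m)!/2$ from Lemma~\ref{calc:lemma} and the residues of $\Gamma$ at $0$ and $-m$. Two remarks. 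First, your overview sentence that ``$P$ vanishes exactly at $s=0,1,\dots,m$'' is wrong at the endpoints: $P(0)$ and $P(m)$ are nonzero (Lemma~\ref{calc:lemma} kills the \emph{symmetric} sum, and the $k=0$ term, respectively the $n=m$ case, prevents the one-sided sum from vanishing), and indeed your own limit computations rely on these nonzero values; $P$ vanishes among integers only at $1,\dots,m-1$. Second, your positivity argument goes beyond the paper, which simply asserts positivity in this lemma; this is a worthwhile addition, and it is not circular since Section~\ref{n:c:sec} does not use Lemma~\ref{a:lemma}. However, Theorem~\ref{main:thm} is stated only for $u\in\cC^\infty_c(\R^N)$, so applying it to the plane wave $e^{i\xi\cdot x}$ is not literally licensed. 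The clean fix is to invoke Theorem~\ref{const:lemma:2} directly: it exhibits $(c_{N,m,s}/2)^{-1}$ as the convergent, strictly positive integral $2^m\int_{\R^N}(1-\cos y_1)^m|y|^{-N-2s}\,dy$, which gives both $c_{N,m,s}>0$ and, reading \eqref{cNms:def} backwards, the nonvanishing of $P(s)$ at non-integer $s$ that your continuity argument needs.
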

\begin{proof}
Let $N,m,n\in \N$, $n<m$, $s\in(0,m)\backslash \N$, and $P(s):=\sum_{k=1}^{\m}(-1)^{k}{ \binom{2m}{m-k}} k^{2s}$, then
\begin{align*}
\frac{1}{c_{N,m,s}}= \frac{ \pi^{\frac{N}{2}}\Gamma(n-s+1) }{4^{s}\Gamma(\frac{N}{2}+s)}\Big(\prod_{i=0}^{n-1}\frac{1}{(i-s)}\Big)\frac{P(s)-P(n)}{n-s},
\end{align*}
because $P(n)=0$, by \eqref{calc}. Then, $\lim_{s\to n}c_{N,m,s}=\frac{4^{n}\Gamma(\frac{N}{2}+n)}{\pi^{\frac{N}{2}}}(-1)^nn!(-P'(n))^{-1}=c_{N,m,n}$. Since the continuity of $s\mapsto c_{N,m,s}>0$ is clear in $(0,m)\backslash \N$ the first assertion follows. Furthermore, by \eqref{calc},
 \begin{align*}
  \lim_{s\to 0^+}\frac{c_{N,m,s}}{s}&=\lim_{s\to 0^+}\frac{2^{2s}\Gamma(\frac{N}{2}+s)}{ \pi^{\frac{N}{2}}\Gamma(1-s) }\Big(\sum_{k=1}^{\m}(-1)^{k+1}{ \binom{2m}{m-k}} k^{2s}\Big)^{-1}
  =\frac{\Gamma(\frac{N}{2})}{ \pi^{\frac{N}{2}}}\frac{2(m!)^2}{(2m)!}.
 \end{align*}
 On the other hand, by Lemma \ref{calc:lemma}, $\sum_{k=1}^{\m}(-1)^{k+1}{ \binom{2m}{m-k}} k^{2m}=\frac{(-1)^{m-1}(2m)!}{2},$ and therefore, 
\begin{align*}
\lim_{s\to m^-}\frac{c_{N,m,s}}{m-s}&=\lim_{s\to m^-}\frac{1}{m-s}\frac{2^{2s}\Gamma(\frac{N}{2}+s)}{ \pi^{\frac{N}{2}}\Gamma(-s) }\Big(\sum_{k=1}^{\m}(-1)^{k}{ \binom{2m}{m-k}} k^{2s}\Big)^{-1}\\
&=\frac{2^{2m+1}\Gamma(\frac{N}{2}+m)}{ \pi^{\frac{N}{2}}(-1)^{m-1}(2m)!}
\lim_{s\to m^-}\frac{1}{(m-s)\Gamma(-s)}
=\frac{2^{2m+1}\Gamma(\frac{N}{2}+m)}{ \pi^{\frac{N}{2}}(-1)^{m-1}(2m)!}
\lim_{s\to m^-}\frac{\prod_{j=0}^{m-1}(j-s)}{\Gamma(m+1-s)}\\
&=\frac{2^{2m+1}\Gamma(\frac{N}{2}+m)}{ \pi^{\frac{N}{2}}(-1)^{m-1}(2m)!}
\lim_{s\to m^-}\prod_{j=0}^{m-1}(j-m)
=\frac{2^{2m+1}\Gamma(\frac{N}{2}+m)}{ \pi^{\frac{N}{2}}(2m)!}
m!
\end{align*}
\end{proof}

\begin{lemma}
Let $N,m\in\N$, $s\in(0,m)$, and $\alpha\in\N^N_{0}$ such that $|\alpha|=m$. Then,
\begin{align}\label{alpha:int}
 2(m-s)\int_{B_1}\frac{y^{2\alpha}}{|y|^{N+2s}}\ dy=\frac{(2\alpha)!}{\alpha!}\frac{\pi^{\frac{N}{2}}}{2^{2m-1}\Gamma(\frac{N}{2}+m)}
\end{align}
In particular,
\begin{align}\label{alpha:eq}
\frac{(-1)^m(2m)!}{(2\alpha)!}
\lim_{s\to m^-}\frac{c_{N,m,s}}{2}
\int_{B_1}\frac{y^{2\alpha}}{|y|^{N+2s}}\ dy
 =(-1)^m\frac{m!}{\alpha!}. 
\end{align}
\end{lemma}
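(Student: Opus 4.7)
My plan is to tackle \eqref{alpha:int} by separating radial and angular integration in spherical coordinates, and then to get \eqref{alpha:eq} as an immediate consequence by combining \eqref{alpha:int} with the asymptotic information provided by Lemma \ref{a:lemma}.

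For \eqref{alpha:int}, writing $y=r\omega$ with $r\in(0,1)$ and $\omega\in S^{N-1}$, and using $|\alpha|=m$, the integrand factors as $y^{2\alpha}|y|^{-N-2s}=r^{2m-N-2s}\omega^{2\alpha}$, so the radial part collapses to $\int_0^1 r^{2m-2s-1}\,dr=\tfrac{1}{2(m-s)}$. Hence
\begin{align*}
2(m-s)\int_{B_1}\frac{y^{2\alpha}}{|y|^{N+2s}}\,dy=\int_{S^{N-1}}\omega^{2\alpha}\,d\omega,
\end{align*}
and the whole problem reduces to the classical surface moment identity
\begin{align*}
\int_{S^{N-1}}\omega^{2\alpha}\,d\omega=\frac{(2\alpha)!}{\alpha!}\frac{\pi^{N/2}}{2^{2m-1}\Gamma(\tfrac{N}{2}+m)}.
\end{align*}

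To prove this moment identity I would use the standard Gaussian trick: compute $\int_{\R^N}y^{2\alpha}e^{-|y|^2}\,dy$ in two ways. Splitting the integral into a product of one-dimensional integrals gives $\prod_{i=1}^N\Gamma(\alpha_i+\tfrac12)$, while polar coordinates give $\tfrac12\Gamma(m+\tfrac{N}{2})\int_{S^{N-1}}\omega^{2\alpha}\,d\omega$. The Legendre duplication formula \eqref{Gammaid} (or a direct iteration of $a\Gamma(a)=\Gamma(a+1)$) rewrites $\Gamma(\alpha_i+\tfrac12)=\sqrt{\pi}\,(2\alpha_i)!/(4^{\alpha_i}\alpha_i!)$, and multiplying over $i$ and using $|\alpha|=m$ produces exactly $\pi^{N/2}(2\alpha)!/(4^m\alpha!)$. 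Solving for the sphere integral yields the stated value, and \eqref{alpha:int} follows.

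For \eqref{alpha:eq}, I would simply rewrite
\begin{align*}
\frac{c_{N,m,s}}{2}\int_{B_1}\frac{y^{2\alpha}}{|y|^{N+2s}}\,dy=\frac{c_{N,m,s}}{m-s}\cdot\frac{(m-s)}{2}\int_{B_1}\frac{y^{2\alpha}}{|y|^{N+2s}}\,dy,
\end{align*}
use \eqref{alpha:int} on the second factor, and pass to the limit $s\to m^-$ using Lemma \ref{a:lemma}, which gives $\lim_{s\to m^-}c_{N,m,s}/(m-s)=2^{2m+1}m!\Gamma(\tfrac{N}{2}+m)/((2m)!\pi^{N/2})$. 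The powers of $2$ and the $\Gamma$ factors cancel, leaving $m!\,(2\alpha)!/((2m)!\,\alpha!)$, and multiplying by $(-1)^m(2m)!/(2\alpha)!$ delivers $(-1)^m m!/\alpha!$ as claimed.

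I do not expect a real obstacle here: the only nontrivial input is the sphere moment formula, and the Gaussian-integral derivation is short and self-contained using the identities \eqref{Gammaid} already catalogued in the Preliminaries. The rest is bookkeeping of $\Gamma$-values and cancellation of powers of two.
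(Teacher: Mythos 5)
Your argument is correct, and the second half (deducing \eqref{alpha:eq} from \eqref{alpha:int} by writing $\tfrac{c_{N,m,s}}{2}=\tfrac{c_{N,m,s}}{m-s}\cdot\tfrac{m-s}{2}$ and invoking Lemma \ref{a:lemma}) is exactly what the paper does. For the main identity \eqref{alpha:int}, however, your route differs genuinely from the paper's. You reduce everything to the surface moment $\int_{S^{N-1}}\omega^{2\alpha}\,d\omega$ after integrating out the radial variable, and then evaluate that moment by computing $\int_{\R^N}y^{2\alpha}e^{-|y|^2}\,dy$ both as a product of one-dimensional Gamma integrals and in polar coordinates; the only special-function input is the duplication formula in \eqref{Gammaid} (or iterated $a\Gamma(a)=\Gamma(a+1)$), and the computation is uniform in the dimension. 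The paper instead treats $N=1$, $N=2$ and $N\ge 3$ separately, parametrizes the sphere by explicit spherical coordinates, and evaluates the resulting iterated angular integrals with the Beta-type identities \eqref{betaid} and \eqref{betaid2}, after which a product of Gamma ratios telescopes to $\Gamma(\tfrac{N}{2}+m)^{-1}$. Your Gaussian-factorization argument is shorter and avoids both the case distinction and the telescoping bookkeeping, at the price of introducing an auxiliary Gaussian integral not among the paper's catalogued identities; the paper's computation stays strictly within its list of known formulas from the Preliminaries. All the constants in your derivation check out: $\prod_i\Gamma(\alpha_i+\tfrac12)=\pi^{N/2}(2\alpha)!/(4^m\alpha!)$, the radial Gaussian factor is $\tfrac12\Gamma(m+\tfrac N2)$, and in the limit the powers of two cancel exactly as you state, yielding $(-1)^m m!/\alpha!$.
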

\begin{proof}

If $N=1$, then $\alpha=m$, $ \int_{-1}^{1}|y|^{1+2s-2m}\ dy=(m-s)^{-1}$ and \eqref{alpha:int} follows because, by \eqref{Gammaid},
$\frac{(2m)!}{m!}\frac{\pi^{\frac{1}{2}}}{2^{2m}}\frac{1}{\Gamma(\frac{1}{2}+m)}=1$.  If $N=2$ we use polar coordinates, \emph{i.e.}, $y_1=r\cos\theta$ and $y_2=r\sin\theta$ for $r\in(0,1)$ and $\theta\in (0,2\pi)$. Let $\alpha\in\N_0^2$ such that $\alpha_1+\alpha_2=m$, then, by \eqref{beta:id}, \eqref{Gammaid},
\begin{align}
 &\int_{B_1}\frac{y^{2\alpha}}{|y|^{2+2s}}\ dy=\frac{1}{2(m-s)}\int_0^{2\pi}\cos^{2\alpha_1}\theta\sin^{2\alpha_2}\theta \ d\theta=2^{-2m}\pi\frac{(2\alpha)!}{(m-s)(\alpha)!m!}
 \label{Dim2}.
\end{align}
Observe that \eqref{alpha:int} follows from \eqref{Dim2}.

For the general case $N\geq 3$ we use spherical coordinates, \emph{i.e.}, $y_{i}=r\cos \theta_{i} \prod_{l=1}^{i-1}\sin\theta_l$ for $i\in\{1,\ldots,N-1\}$ and $y_{N}=r \prod_{l=1}^{N-1}\sin\theta_l$, where 
$r>0,$ $\theta_1,\ldots,\theta_{N-2}\in(0,\pi),$ $\theta_{N-1}\in(0,2\pi)$, and the associated Jacobian is 
$J(r,\theta_1,\ldots,\theta_{N-1})=r^{N-1}\prod_{j=1}^{N-2}\sin^{N-1-j}\theta_j.$

Let $\alpha\in\N^N_{0}$ such that $|\alpha|=m$ and for $i=0,1,\ldots, N-1$ let $S_i:=\sum_{j=i+1}^N \alpha_j$, then,
\begin{align*}
 y^{2\alpha} J(r,\theta_1,\ldots,\theta_{N-1})=&r^{N-1+2m}\sin^{2S_1+N-2}\theta_1\cos^{2\alpha_1}\theta_1\sin^{2S_2+N-3}\theta_2\cos^{2\alpha_2}\theta_2\\
 &\qquad\qquad\qquad\qquad\qquad\qquad \cdots \sin^{2S_{N-1}}\theta_{N-1}\cos^{2\alpha_{N-1}}\theta_{N-1}.
 \end{align*}
Thus, by \eqref{betaid} and \eqref{betaid2},
\begin{align*}
 2(m-s)\int_{B_1}&\frac{y^{2\alpha}}{|y|^{N+2s}}\ dy
 =2(m-s)\int_{B_1}\frac{\prod_{i=1}^N y_i^{2\alpha_i}}{|y|^{N+2s}}\ dy\\
 &=\int_0^{2\pi} 
 \sin^{2\alpha_{N}}\theta_{N-1}\cos^{2\alpha_{N-1}}\theta_{N-1}
 d\theta_{N-1}
 \prod_{i=1}^{N-2}\int_0^{\pi}
 \sin^{2(S_{i}+\frac{N-i-1}{2})}\theta_i\cos^{2\alpha_i}\theta_i\ d\theta_i\\
&=\frac{2^{1-2(\alpha_{N}+\alpha_{N-1})}\pi (2\alpha_{N})!(2\alpha_{N-1})!}{(\alpha_{N})!(\alpha_{N-1})!(\alpha_{N}+\alpha_{N-1})!}
\prod_{i=1}^{N-2} 
\frac{\Gamma(\frac{1}{2}+S_{i}+\frac{N-i-1}{2})4^{-\alpha_i}\pi^\frac{1}{2}(2\alpha_i)!}{\Gamma(1+S_{i}+\frac{N-i-1}{2}+\alpha_i)\alpha_i!}\\
&=\frac{\pi^{\frac{N}{2}}(2\alpha)!}{2^{2m-1}\alpha!}\frac{1}{(\alpha_{N}+\alpha_{N-1})!}
\prod_{i=1}^{N-2} 
\frac{\Gamma(\frac{N-i}{2}+S_i)}{\Gamma(\frac{N-i+1}{2}+S_{i-1})}\\
&=\frac{\pi^{\frac{N}{2}}(2\alpha)!}{2^{2m-1}\alpha!}\frac{1}{(\alpha_{N}+\alpha_{N-1})!}
\frac{\Gamma(\frac{N-(N-2)}{2}+S_{N-2})}{\Gamma(\frac{N}{2}+S_{0})}
=\frac{\pi^{\frac{N}{2}}(2\alpha)!}{2^{2m-1}\alpha!\Gamma(\frac{N}{2}+m)},
\end{align*}
and \eqref{alpha:int} follows.  Equation \eqref{alpha:eq} follows from \eqref{alpha:int} and Lemma \ref{a:lemma} by direct substitution.
\end{proof}

We now proceed with our asymptotic analysis on the operator $L_{m,s}$ as $s$ approaches $m$ from below. For a similar asymptotic study in the case $s\in(0,1)$, see \cite[Proposition 4.4]{NPV11}.

\begin{prop}\label{lim:prom}
Let $m\in \N$, $\eta,\beta\in(0,1)$, $U\subset \R^N$ open, and $x\in U$. Then 
\begin{align}
\lim_{s\to0^+}L_{m,s}u(x)&=u(x) \quad\quad\quad\quad\text{ for all $u\in C^{\beta}(U)\cap L^{\infty}(\R^N)$,}\label{fi:cl}\\
\lim_{s\to m^-}L_{m,s}u(x)&=(-\Delta)^mu(x) \quad\text{ for all $u\in \cC^{2m}(U)\cap \cL^1_{m-\eta}$.}\label{sc:cl}
\end{align}
\end{prop}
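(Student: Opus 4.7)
The plan is to handle the two limits separately, in each case splitting the defining hypersingular integral for $L_{m,s}u(x)$ into a local part on $B_\rho(0)$ and a tail on $\R^N\setminus B_\rho(0)$, and then exploiting the precise asymptotics of $c_{N,m,s}$ at the endpoints of $(0,m)$ given by Lemma \ref{a:lemma}.

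For the limit $s\to m^-$, I would fix $\varepsilon>0$ and apply Lemma \ref{mte:lemma:2} to obtain $\rho>0$ so that on $B_\rho$
\[
\delta_m u(x,y) = R(x,y) + \sum_{|\alpha|=2m}\frac{(-1)^m(2m)!}{\alpha!}\partial^\alpha u(x)\,y^\alpha,
\]
with $|R(x,y)|\le C\varepsilon|y|^{2m}$. Symmetry in $y$ eliminates the odd-$\alpha$ contributions against the even kernel $|y|^{-N-2s}$, so only $\alpha=2\beta$ with $|\beta|=m$ survives. Combining \eqref{alpha:eq} with the multinomial identity $\Delta^m=\sum_{|\beta|=m}\frac{m!}{\beta!}\partial^{2\beta}$ makes the main piece collapse to $(-\Delta)^m u(x)$; the $R$-term contributes $O(\varepsilon)$ via \eqref{alpha:int} and the asymptotic $c_{N,m,s}/(m-s)\to C(N,m)>0$. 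For the tail $\int_{\R^N\setminus B_\rho}$, the bound $|y|^{-N-2s}\le\rho^{2(\eta-m+s)}|y|^{-N-2(m-\eta)}$ (valid for $s$ close to $m$) together with $u\in\cL^1_{m-\eta}$ provides an $s$-uniform bound, so multiplication by $c_{N,m,s}\to 0$ makes it vanish. Letting $\varepsilon\to 0^+$ finishes this case.

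For the limit $s\to 0^+$, the decisive algebraic step is the observation that, using $\sum_k(-1)^k\binom{2m}{m-k}=0$ (Lemma \ref{calc:lemma}) and the rescaling $z=ky$ for each $k\neq 0$, one can rewrite the hypersingular integrand as a first-order difference:
\[
\int_{\R^N}\frac{\delta_m u(x,y)}{|y|^{N+2s}}\,dy = 2P(s)\int_{\R^N}\frac{u(x+z)-u(x)}{|z|^{N+2s}}\,dz,\qquad P(s):=\sum_{k=1}^m(-1)^k\binom{2m}{m-k}k^{2s}.
\]
Plugging in the definition of $c_{N,m,s}$ reduces $L_{m,s}u(x)$ to the usual fractional-Laplacian expression $\frac{4^s\Gamma(N/2+s)}{\pi^{N/2}\Gamma(-s)}\int\frac{u(x+z)-u(x)}{|z|^{N+2s}}\,dz$, in line with Lemma \ref{mton}. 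Splitting the $z$-integral at $|z|=R$, Hölder continuity of $u$ near $x$ makes the inner piece uniformly bounded, so multiplication by $1/\Gamma(-s)\sim -s$ sends it to zero; in the outer piece the explicit evaluation $\int_{|z|>R}|z|^{-N-2s}\,dz=|S^{N-1}|R^{-2s}/(2s)$ combines with $1/\Gamma(-s)\sim -s$ and the identity $|S^{N-1}|=2\pi^{N/2}/\Gamma(N/2)$ to yield exactly $u(x)$, while the remaining outer contribution $\int_{|z|>R}u(x+z)|z|^{-N-2s}\,dz$ is dispatched by taking $R$ large (using integrability of $u$ at infinity).

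The hardest step will be the tail analysis in the $s\to 0^+$ case: since $R^{-2s}\to 1$ while $1/\Gamma(-s)=O(s)$, one must carefully interchange $R\to\infty$ with $s\to 0^+$ and ensure sufficient control on $u$ at infinity, as in the Schwartz hypothesis employed in \cite[Prop.~4.4]{NPV11}. By contrast, the $s\to m^-$ argument is essentially bookkeeping: the only nontrivial check is that the constants from Lemma \ref{a:lemma}, equation \eqref{alpha:eq}, and the multinomial coefficient conspire to produce $(-\Delta)^m u(x)$ exactly.
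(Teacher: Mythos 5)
Your argument for \eqref{sc:cl} is correct and is essentially the paper's own proof: the Taylor decomposition of Lemma \ref{mte:lemma:2}, the symmetry cancellation of the odd monomials against the radial kernel, the identification of the main term through \eqref{alpha:eq} and the multinomial theorem, the remainder estimated by $C\varepsilon\, c_{N,m,s}\,\rho^{2(m-s)}/(m-s)$ with the limit of $c_{N,m,s}/(m-s)$ from Lemma \ref{a:lemma}, and the tail killed by a bound uniform in $s\in(m-\eta,m)$ coming from $u\in\cL^1_{m-\eta}$ together with $c_{N,m,s}\to 0$. (Only the exponent of $\rho$ in your kernel comparison has a sign slip: for $|y|\ge\rho$ and $s\in(m-\eta,m)$ one gets $|y|^{-N-2s}\le \rho^{2(m-\eta-s)}|y|^{-N-2(m-\eta)}\le\rho^{-2\eta}|y|^{-N-2(m-\eta)}$; this does not affect the argument.)

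The gap is in \eqref{fi:cl}, exactly at the step you yourself flag. Your rescaling identity reducing $L_{m,s}u$ to the classical one-difference expression with constant $4^s\Gamma(\frac N2+s)/(\pi^{N/2}\Gamma(-s))$ is fine (it is Lemma \ref{mton} in disguise), and the inner piece and the $-u(x)$ part of the outer piece are handled correctly. But the leftover term cannot be ``dispatched by taking $R$ large'': under the stated hypothesis $u\in C^\beta(U)\cap L^\infty(\R^N)$ there is no integrability of $u$ at infinity, and one only gets $\int_{|z|>R}|u(x+z)|\,|z|^{-N-2s}\,dz\le \|u\|_\infty|S^{N-1}|R^{-2s}/(2s)$, so after multiplication by $|\Gamma(-s)|^{-1}\sim s$ this contributes a quantity of order $\|u\|_\infty$ uniformly in $R$; interchanging $R\to\infty$ with $s\to0^+$ does not help. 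This is not a technicality: for $u\equiv 1$ one has $\delta_m u\equiv 0$, hence $L_{m,s}u\equiv 0$ for every $s$, so the limit $u(x)$ cannot be recovered from boundedness alone --- some decay at infinity is genuinely needed. The paper sidesteps this by reducing to $L_{1,s}$ via Lemma \ref{mton} and quoting \cite[Proposition 4.4]{NPV11}, whose hypotheses (smooth compactly supported, i.e.\ decaying, functions) supply that decay; your direct computation goes through verbatim if you likewise assume, say, that $u$ vanishes at infinity or lies in $L^p(\R^N)$ for some finite $p$, in which case the leftover tail is $o(1)$ as $s\to0^+$. As written, however, the proof of \eqref{fi:cl} is incomplete, and no argument can close it under the bare $L^\infty$ assumption.
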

\begin{proof}
By Lemma \ref{mton}, we have that $L_{m,s}$ reduces to $L_{1,s}$ if $s\in(0,1)$, and therefore \eqref{fi:cl} follows from \cite[Proposition 4.4]{NPV11}. For \eqref{sc:cl}, let $u\in \cC^{2m}(U)\cap \cL^1_{m-\eta}$, $x\in U$, $\eps>0$, and let $\rho(U,x,m,u,\varepsilon)=\rho\in(0,1)$ be the constant given by Lemma \ref{mte:lemma:2}. Let $s\in(m-\eta,m)$. We collect first some useful facts. Since $u\in\cL^1_{m-\eta}\subset\cL^1_s$,
\begin{align*}
\Big| \int_{\R^N\backslash B_\rho} \frac{\delta_m u(x,y)}{|y|^{N+2s}}\ dy \Big|
 \leq \sum_{k=-m}^m { \binom{2m}{m-k}}\int_{\R^N\backslash B_\rho} \frac{|u(x+ky)|}{|y|^{N+2s}}\ dy<\infty.
\end{align*}
Then, by Lemma \ref{a:lemma}, $\lim_{s\to m^-} c_{N,m,s}=0$ and therefore
\begin{align}\label{z:out}
 \lim_{s\to m^-}\frac{c_{N,m,s}}{2} \int_{\R^N\backslash B_{{\rho}}} \frac{\delta_m u(x,y)}{|y|^{N+2s}}\ dy=0.
\end{align}
On the other hand, by Lemma \ref{mte:lemma:2}, 
 \begin{align}\label{Fl1}
 \int_{B_{\rho}} \frac{\delta_m u(x,y)}{|y|^{N+2s}}\ dy=
 \int_{B_{\rho}} \left(\frac{R(x,y)}{|y|^{2s+N}}+\sum_{|\alpha|=2m}\frac{(-1)^m(2m)!}{\alpha!} \partial^\alpha u(x)\frac{y^\alpha}{|y|^{N+2s}}\right) dy,
\end{align}
where $|R(x,y)|\leq C \eps|y|^{2m}$ for all $x,y\in B_{\rho}$ and for some $C(N,m,u)=C>0$.  In particular,
\begin{align}\label{Fl2}
\left|\int_{B_{{\rho}}} \frac{R(x,y)}{|y|^{2s+N}}\;dy\right|\leq C_1\frac{\eps\rho^{2(m-s)}}{m-s}\qquad \text{ for some }C_1(N,m,u)>0.
\end{align}

Furthermore, note that, if $\alpha\in\N^N_{0}$ is such that $\alpha_i\neq 0$ is odd for some $i\in\{1,\ldots,N\}$, then
\begin{align}\label{symzero}
\int_{B_{\rho}} \frac{\prod_{i=1}^N y^{\alpha_i}_i}{|y|^{N+2s}}\ dy
=-\int_{B_{\rho}} \frac{\prod_{i=1}^N \widetilde y^{\alpha_i}_i}{|\widetilde y|^{N+2s}}\ d\widetilde y,
\qquad \text{\emph{i.e.}} \quad \int_{B_{\rho}} \frac{y^{\alpha}}{|y|^{N+2s}}\ dy=0.
\end{align}

Finally, by the multinomial theorem,
\begin{align}\label{pl}
(-1)^m\sum_{|\alpha|=m}\frac{m!}{\alpha!}\partial^{2\alpha} u(x)=(-1)^m(\sum_{i=1}^N \partial_{ii})^m u(x)=(-\Delta)^m u(x).
\end{align}
Therefore, by \eqref{pl}, \eqref{alpha:eq}, a change of variables, \eqref{symzero}, \eqref{Fl1}, \eqref{z:out} and \eqref{Fl2},
\begin{align*}
& \left|\lim_{s\to m^-}L_{m,s}u(x)-(-\Delta)^m u(x)\right|
=\left|\lim_{s\to m^-}L_{m,s}u(x)
 - \sum_{|\alpha|=m}(-1)^m\frac{m!}{\alpha!}\partial^{2\alpha} u(x) \right|\\
&=\left|\lim_{s\to m^-}L_{m,s}u(x)
 -\sum_{|\alpha|=m}\partial^{2\alpha} u(x)\frac{(-1)^m (2m)!}{(2\alpha)!} \lim_{s\to m^-}\frac{c_{N,m,s}}{2}\rho^{2m-2s}\int_{B_1}\frac{y^{2\alpha}}{|y|^{N+2s}}\ dy\right|\\
&=\left|\lim_{s\to m^-}L_{m,s}u(x)
 -\lim_{s\to m^-}\frac{c_{N,m,s}}{2}\sum_{|\alpha|=m}\partial^{2\alpha} u(x)\frac{(-1)^m (2m)!}{(2\alpha)!} \int_{B_{\rho}}\frac{y^{2\alpha}}{|y|^{N+2s}}\ dy\right|\\
 &=\left|\lim_{s\to m^-}L_{m,s}u(x)
 -\lim_{s\to m^-}\frac{c_{N,m,s}}{2}\sum_{|\alpha|=2m}\partial^{\alpha} u(x)\frac{(-1)^m (2m)!}{(\alpha)!}\int_{B_{\rho}}\frac{y^{\alpha}}{|y|^{N+2s}}\ dy\right|\\
 &=\left|\lim_{s\to m^-}L_{m,s}u(x)
 -\lim_{s\to m^-}\frac{c_{N,m,s}}{2}
 (\int_{B_{\rho}} \frac{\delta_m u(x,y)}{|y|^{N+2s}}\ dy-\int_{B_{\rho}} \frac{R(x,y)}{|y|^{2s+N}}\ dy)\right|\\
 &=\left|\lim_{s\to m^-}\frac{c_{N,m,s}}{2}\int_{B_{\rho}} \frac{R(x,y)}{|y|^{2s+N}}\ dy\right|\leq 
 \frac{C_1}{2}\eps \lim_{s\to m^-}\frac{c_{N,m,s}}{m-s}\rho^{2(m-s)}
 =\frac{C_1}{2}\frac{2^{2m+1}m!\Gamma(\frac{N}{2}+m)}{(2m)!\pi^{\frac{N}{2}}}\eps.
 \end{align*}
The result follows by letting $\eps\to 0$.
\end{proof}

\begin{proof}[Proof of Theorem \ref{van:cor}]
Let $m,n\in \N$ with $n<m$. The limits \eqref{asymp} follow from Proposition \ref{lim:prom}. Furthermore, by Lemma \ref{a:lemma}, $s\mapsto c_{N,m,s}>0$ is continuous in $(0,m)$, and therefore
	\begin{align*}
	L_{m,n}u(x)=\lim_{s\to n^-} L_{m,s}u(x)=\lim_{s\to n^-} L_{n,s}u(x)=(-\Delta)^n u(x)\qquad \text{ for all }x\in U,
	\end{align*}
by Lebesgue dominated convergence, Lemma \ref{mton}, and Proposition \ref{lim:prom}. This ends the proof.
\end{proof}

\subsection{The bilinear form}

\begin{proof}[Proof of Theorem \ref{bilinear}]
Let $m,n\in \N$, $s>0$, $s<n\leq 2m$, and $\varphi,\psi\in \cC^\infty_c(\R^N)$. By Lemmas \ref{mton}, \ref{dibyp}, and Fubini's theorem,
\begin{align*}
 \cE_{2m,s}(\varphi,\psi)&=\frac{c_{N,2m,s}}{2}\int_{\R^N}\int_{\R^N}\frac{\delta_{m}\varphi(x,y) \delta_{m}\psi(x,y)}{|y|^{N+2s}}\ dxdy
 =\int_{\R^N}\frac{c_{N,2m,s}}{2}\int_{\R^N}\frac{\delta_{2m}\varphi(x,y)}{|y|^{N+2s}}\ dy\ \psi(x)dx\\&
 =\int_{\R^N}L_{2m,s}\varphi(x)\psi(x)\ dx=\int_{\R^N}L_{n,s}\varphi(x)\psi(x)\ dx.
\end{align*}

By Theorem \ref{new:main:thm}, Lemma \ref{ibyp:ours}, and Proposition \ref{interchange} we have that
\begin{align}\label{smooth}
 \cE_{2m,s}(\varphi,\psi)=\cE_{s}(\varphi,\psi).
 \end{align}

Now, let $s\in(i,i+1)$ for some $i\in\N$, $i\leq 2m-1$, $u,v\in H^s(\R^N)$ and, for $j\in\N$, let $u_j,v_j\in \cC^\infty_c(\R^N)$ be such that $u_j\to u$ and $v_j\to v$ in $H^s(\R^N)$ as $j\to\infty$.  By \eqref{calc}, \eqref{mte:eq}, a change of variables, and Fatou's Lemma,
\begin{align*}
\cE_{2m,s}(u,u)&\leq\liminf_{j\to\infty}\cE_{2m,s}(u_j,u_j)=\frac{c_{N,2m,s}}{2}\liminf_{j\to\infty}\int_{\R^N}\int_{\R^N}\frac{|\delta_{m}u_j(x,y)|^2}{|y|^{N+2s}}\ dxdy\nonumber\\
&=\frac{c_{N,2m,s}}{2}\liminf_{j\to\infty}\int_{\R^N}\int_{\R^N}\frac{\left|\sum_{k=-\m}^\m (-1)^k { \binom{2m}{m-k}} u_j(x+ky)\right|^2}{|y|^{N+2s}}\ dxdy\nonumber\\ 
&\leq c_{N,2m,s}\liminf_{j\to\infty}\int_{\R^N}\int_{\R^N}\frac{\left(\sum_{|\alpha|=i}\frac{1}{\alpha!} \sum_{k=1}^\m { \binom{2m}{m-k}}k^{i} |\partial^\alpha u_j(x+k y\theta_j)-\partial^\alpha u_j(x)|\right)^2}{|y|^{N+2(s-i)}}\ dxdy\nonumber\\
&\leq C\liminf_{j\to\infty}\sum_{|\alpha|=i}\ \int_{\R^N}\int_{\R^N}\frac{|\partial^\alpha u_j(x+y)-\partial^\alpha u_j(x)|^2}{|y|^{N+2(s-i)}}\ dxdy\\
&\leq C \liminf_{j\to\infty}\|u_j\|^2_{H^s(\R^N)}=C\|u\|^2_{H^s(\R^N)},
\end{align*}
where $\theta_j\in[0,1]$ is given by Lemma \ref{mte:lemma} and $C(N,m,s)=C>0$. Therefore, by H\"{o}lder inequality,
\begin{align*}
|\cE_{2m,s}(u,v)|&=\frac{c_{N,2m,s}}{2}\left|\int_{\R^N}\int_{\R^N}\frac{\delta_{m}u(x,y)}{|y|^{N/2+s}}\: \frac{\delta_{m}v(x,y)}{|y|^{N/2+s}}\ dxdy\right| \\
& \leq\ \frac{c_{N,2m,s}}{2}\left(\int_{\R^N}\int_{\R^N}\frac{|\delta_{m}u(x,y)|^2}{|y|^{N+2s}}\ dxdy\right)^{1/2}
\left(\int_{\R^N}\int_{\R^N}\frac{|\delta_{m}v(x,y)|^2}{|y|^{N+2s}}\ dxdy\right)^{1/2}\\
&= \cE_{2m,s}(u,u)^{1/2}\:\cE_{2m,s}(v,v)^{1/2}\leq C\|u\|_{H^s(\R^N)}\|v\|_{H^s(\R^N)}.
\end{align*}
Thus $\cE_{2m,s}$ is a bounded bilinear form in $H^s(\R^N)$, and, by \eqref{smooth},
\begin{align*}
 \cE_{s}(u,v)=\lim_{j\to\infty}\cE_{s}(u_j,v_j)=\lim_{j\to\infty}\cE_{2m,s}(u_j,v_j)=\cE_{2m,s}(u,v).
\end{align*}
\end{proof}

\section{The Fourier symbol}
\label{n:c:sec}

The goal of this section is to show Theorem \ref{main:thm}.  The proof is primarily based on the following.
\begin{thm}\label{const:lemma:2}
	Let $N,m\in\N$, $s\in(0,m)$, and $c_{N,m,s}$ as in $\eqref{cNms:def}$. Then
	\begin{align*}
	\Big(\frac{c_{N,m,s}}{2}\Big)^{-1}=2^{\m} \int_{\R^N} \frac{(1-\cos(y_1))^{\m}}{|y|^{N+2s}}\ dy.
	\end{align*}
\end{thm}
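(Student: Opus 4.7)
The plan is to evaluate the right-hand side directly in four sub-computations and match it against the defining formula \eqref{cNms:def} for $c_{N,m,s}$. First, splitting $y=(y_1,y')\in\R\times\R^{N-1}$ and integrating out $y'$ via polar coordinates (using identity \eqref{beta:id}) reduces the $N$-dimensional integral to a one-dimensional one:
\[
 2^m\int_{\R^N}\frac{(1-\cos y_1)^m}{|y|^{N+2s}}dy=\frac{2^{m+1}\pi^{(N-1)/2}\Gamma(s+\tfrac{1}{2})}{\Gamma(\tfrac{N}{2}+s)}\int_0^\infty\frac{(1-\cos t)^m}{t^{1+2s}}dt,
\]
with the case $N=1$ being the trivial rewriting $\int_\R=2\int_0^\infty$.

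Next, applying $t^{-1-2s}=\Gamma(1+2s)^{-1}\int_0^\infty\tau^{2s}e^{-t\tau}d\tau$ and Fubini, combined with $(1-\cos t)^m=2^m\sin^{2m}(t/2)$ and the substitution $u=t/2$ in \eqref{Lapl:id}, yields $\cL((1-\cos\cdot)^m)(\tau)=(2m)!/[2^m\tau\prod_{k=1}^m(k^2+\tau^2)]$, so the problem reduces to computing
\[
 J(s):=\int_0^\infty\frac{\tau^{2s-1}}{\prod_{k=1}^m(k^2+\tau^2)}d\tau.
\]
The key claim is that $J(s)=-\pi P(s)/[(2m)!\sin(\pi s)]$ with $P(s):=\sum_{k=1}^m(-1)^k\binom{2m}{m-k}k^{2s}$. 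Since both sides extend holomorphically to $\{0<\mathrm{Re}\,s<m\}$, it suffices to verify this on $s\in(0,1)$: after $u=\tau^2$, the partial-fraction expansion
\[
 \frac{1}{\prod_{k=1}^m(k^2+u)}=\sum_{k=1}^m\frac{A_k}{k^2+u},\quad A_k=\prod_{j\neq k}\frac{1}{j^2-k^2}=\frac{(-1)^{k-1}\,2k^2}{(m-k)!(m+k)!},
\]
reduces each summand via the rescaling $u=k^2v$ and \eqref{gamma:id:2} to $k^{2(s-1)}\pi/\sin(\pi s)$, and assembling through $\binom{2m}{m-k}=(2m)!/((m-k)!(m+k)!)$ gives the claim.

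Combining the three steps produces
\[
 2^m\int_{\R^N}\frac{(1-\cos y_1)^m}{|y|^{N+2s}}dy=-\frac{2\pi^{(N+1)/2}\Gamma(s+\tfrac{1}{2})P(s)}{\Gamma(\tfrac{N}{2}+s)\Gamma(1+2s)\sin(\pi s)}.
\]
To finish, apply Legendre's duplication $\Gamma(1+2s)=s\cdot 4^s\Gamma(s)\Gamma(s+\tfrac{1}{2})/\sqrt{\pi}$, the reflection formula $\pi/\sin(\pi s)=\Gamma(s)\Gamma(1-s)$ implicit in \eqref{gamma:id:2}, and $\Gamma(1-s)=-s\Gamma(-s)$; every $\Gamma(s)$, $\Gamma(s+\tfrac{1}{2})$, and factor of $s$ cancels, leaving exactly $2\pi^{N/2}\Gamma(-s)P(s)/(4^s\Gamma(\tfrac{N}{2}+s))=2/c_{N,m,s}$ by \eqref{cNms:def}. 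The integer case $s=n\in\{1,\ldots,m-1\}$ follows by passing to the limit $s\to n$: continuity of $c_{N,m,s}$ is supplied by Lemma \ref{a:lemma}, while dominated convergence gives continuity of the integral.

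The main obstacle lies in the evaluation of $J(s)$: each individual term of the partial-fraction expansion yields an integral $\int_0^\infty u^{s-1}/(k^2+u)\,du$ which only converges for $s\in(0,1)$, so the identity is obtained directly on a sub-strip and then extended by analyticity. A direct alternative is a keyhole-contour integral of $z^{s-1}/\prod_k(z+k^2)$ around $[0,\infty)$, valid uniformly for $s\in(0,m)$, whose residues at $z=-k^2$ reproduce $P(s)$ at once.
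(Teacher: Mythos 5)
Your proposal is correct, and its first half (splitting off the $y'$-variables via \eqref{beta:id}, converting $t^{-1-2s}$ into a Gamma integral, and using the Laplace transform \eqref{Lapl:id} of $\sin^{2m}$ to reach $\int_0^\infty \rho^{s-1}\prod_{k=1}^m(\rho+k^2)^{-1}d\rho$, up to the rescaling $u=\tau^2$) coincides with the paper's computation. Where you genuinely diverge is in the evaluation of that one-dimensional integral, which is the paper's Lemma \ref{const:lemma}: the paper computes it by a purely real-variable argument valid directly for every $s\in(0,m)$, writing $\rho^{s-1}=\rho^{\sigma-1}\sum_k a_{k,n}\prod_{j\neq k}(\rho+j^2)$ with $\sigma=s-n+1$ and using the combinatorial identities of Lemmas \ref{lm} and \ref{hc} to control the cancellations, including a separate explicit computation for $s\in\N$ that produces the logarithmic expression matching the second line of \eqref{cNms:def}. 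You instead verify the partial-fraction identity only on $s\in(0,1)$, where each term $\int_0^\infty u^{s-1}(k^2+u)^{-1}du$ converges, extend to noninteger $s\in(0,m)$ by holomorphy in $s$, and dispose of integer $s$ by a continuity argument ($s\mapsto c_{N,m,s}$ continuous by Lemma \ref{a:lemma}, the integral continuous by dominated convergence). This is legitimate and there is no circularity, since Lemma \ref{a:lemma} is proved in the paper independently of this theorem; your route buys a much lighter computation (no analogue of Lemma \ref{hc}, no logarithmic cancellation as in \eqref{zs}--\eqref{c2:6}) at the price of a complex-analytic continuation step, whereas the paper stays entirely elementary and in addition obtains the explicit value of the integral at integer $s$, which directly justifies the second line of \eqref{cNms:def} rather than recovering it as a limit. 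One small imprecision: your assertion that $-\pi P(s)/[(2m)!\sin(\pi s)]$ is holomorphic on the whole strip $\{0<\mathrm{Re}\,s<m\}$ tacitly needs $P(n)=0$ for $n=1,\dots,m-1$ (the first identity in \eqref{calc}); either cite that, or note that continuation on the strip minus the integers already suffices, since you treat integer $s$ separately anyway.
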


We show first some helpful decompositions and identities.

\begin{lemma}\label{lm}
Let $n\in\N$, $I:=\{1,\ldots,n\}$, and $\rho\in\R$, then 
\begin{align}\label{decom}
\rho^{n-1}=\sum_{k\in I} a_{k,n}\prod_{j\in I\backslash \{k\}}(\rho+j^2),\qquad \text{where }\ \ a_{k,n}:=2\frac{(-1)^{k-n} k^{2n}}{(n+k)!(n-k)!}\qquad \text{ for }k\in I.
\end{align}
Furthermore, if $J\subset\N$ is a finite subset, $k\in \N$, $J(k):=J\cup\{k\}$, and $\rho\in \R$, $\rho\neq -j^2$ for all $j\in J(k)$, then
\begin{align}\label{decom2}
 \frac{1}{\prod_{j\in J(k)}(\rho+j^2)}=\sum_{j\in J(k)}\frac{b_{j,k}}{(\rho+j^2)},\qquad \text{where }\ \ b_{j,k}:=\frac{1}{\prod_{i\in J(k)\backslash\{j\}}(i^2-j^2)}.
\end{align}
\end{lemma}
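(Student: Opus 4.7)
\textbf{Proof plan for Lemma \ref{lm}.} The two identities are independent; I would treat the second one first since it is standard partial fractions, then use a Lagrange-interpolation argument for the first.

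For \eqref{decom2}, I would observe that the polynomial $\prod_{j\in J(k)}(\rho+j^2)$ has simple roots at $\rho=-j^2$, $j\in J(k)$, since the elements of $J(k)\subset\N$ are distinct positive integers and $i\mapsto i^2$ is injective on $\N$. Hence $1/\prod_{j\in J(k)}(\rho+j^2)$ admits a partial fraction decomposition of the form $\sum_{j\in J(k)} b_{j,k}/(\rho+j^2)$. Multiplying both sides by $(\rho+j^2)$ and letting $\rho\to -j^2$ isolates the coefficient
\[
b_{j,k}=\lim_{\rho\to -j^2}\frac{1}{\prod_{i\in J(k)\setminus\{j\}}(\rho+i^2)}=\frac{1}{\prod_{i\in J(k)\setminus\{j\}}(i^2-j^2)},
\]
which is exactly the stated formula.

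For \eqref{decom}, the key observation is that both sides are polynomials in $\rho$ of degree at most $n-1$ (the RHS is a sum of products of $n-1$ linear factors). It therefore suffices to check that they agree at $n$ distinct points. I would evaluate at $\rho=-\ell^2$ for $\ell\in I=\{1,\ldots,n\}$: the LHS gives $(-1)^{n-1}\ell^{2n-2}$, while on the RHS every summand with $k\neq\ell$ vanishes because of the factor $(\rho+\ell^2)$. Hence the identity reduces to checking
\[
a_{\ell,n}\prod_{j\in I\setminus\{\ell\}}(j^2-\ell^2)=(-1)^{n-1}\ell^{2n-2}\qquad\text{for every }\ell\in I.
\]

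The heart of the proof is the combinatorial evaluation of $\prod_{j\in I\setminus\{\ell\}}(j^2-\ell^2)$. Splitting $j^2-\ell^2=(j-\ell)(j+\ell)$, I would compute
\[
\prod_{\substack{j=1\\ j\neq\ell}}^{n}(j-\ell)=(-1)^{\ell-1}(\ell-1)!\,(n-\ell)!,\qquad
\prod_{\substack{j=1\\ j\neq\ell}}^{n}(j+\ell)=\frac{(2\ell-1)!}{\ell!}\cdot\frac{(n+\ell)!}{(2\ell)!}=\frac{(n+\ell)!}{2\ell\cdot\ell!},
\]
so that $\prod_{j\in I\setminus\{\ell\}}(j^2-\ell^2)=(-1)^{\ell-1}(n-\ell)!(n+\ell)!/(2\ell^2)$. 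Solving for $a_{\ell,n}$ then yields
\[
a_{\ell,n}=\frac{(-1)^{n-1}\ell^{2n-2}\cdot 2\ell^{2}}{(-1)^{\ell-1}(n-\ell)!(n+\ell)!}=\frac{2(-1)^{n-\ell}\ell^{2n}}{(n-\ell)!(n+\ell)!}=\frac{2(-1)^{k-n}\ell^{2n}}{(n+\ell)!(n-\ell)!}\bigg|_{k=\ell},
\]
matching the definition of $a_{k,n}$ and concluding the proof. The only slightly delicate step is the bookkeeping for the two products above, which I would handle by writing out the consecutive integer ranges explicitly.
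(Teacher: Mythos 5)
Your proposal is correct and essentially follows the paper's proof: the paper also proves \eqref{decom2} by isolating each coefficient through the limit $\rho\to-j^2$, and for \eqref{decom} it likewise evaluates at $\rho=-k^2$ and reduces to the same factorial identity $\prod_{j\in I\backslash\{k\}}(j^2-k^2)=(-1)^{k-1}(n+k)!(n-k)!/(2k^2)$ that you compute. The only cosmetic difference is that you justify \eqref{decom} by degree counting and agreement at the $n$ points $\rho=-\ell^2$, whereas the paper first observes that the products $\prod_{j\in I\backslash\{k\}}(\rho+j^2)$ form a basis of the polynomials of degree at most $n-1$ and then extracts the coefficients by the same limit method.
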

\begin{proof} 
For $k\in I$ let $f_k(\rho):=\prod_{j\in I\setminus\{k\}}(\rho+j^2)$. The existence of $a_{k,n}\in \R$ for $k\in I$ satisfying the left equality in \eqref{decom} is guaranteed by the fact that $\{f_k\}_{k\in \N}$ form a basis of the space of polynomials of degree less than or equal to $n-1$ (because $f_i(-i^2)\neq 0$ and $f_k(-i^2)=0$ for $k\neq i$, $k,i\in I$).  We now use the \emph{limit method} to show that the coefficients $a_{k,n}$ are as in \eqref{decom}. For $k\in I:=\{1,\ldots,n\}$ we deduce from \eqref{decom} that
\begin{align}\label{step1}
a_{k,n}=\lim_{\rho\to -k^2} \frac{\rho^{n-1}-\sum_{l\in I\backslash\{k\}} a_{l,n}\prod_{j\in I\backslash\{l\}}(\rho+j^2)}{\Pi_{j\in I\backslash\{k\}}(\rho+j^2)}
=\frac{(-1)^{n-1}k^{2n-2}}{\prod_{j\in I\backslash\{k\}}(j^2-k^2)}.
\end{align}
Observe that 
\begin{align*}
(n+k)!=\prod_{j=0}^{n}(k+j)\prod_{j=1}^{k-1}(k-j)\quad\text{ and }\quad (n-k)!=\prod_{j=k+1}^{n}(j-k),
\end{align*}
therefore $(n+k)!(n-k)!=(-1)^{k+1}2k^2\prod_{j\in I\backslash\{k\}}(j+k)(j-k)$, which implies that
\begin{align}\label{step2}
\frac{(-1)^{n-1}k^{2n-2}}{\prod_{j\in I\backslash\{k\}}(j^2-k^2)}=2\frac{(-1)^{k-n} k^{2n}}{(n+k)!(n-k)!},
\end{align}
and thus \eqref{decom} follows from \eqref{step1} and \eqref{step2}.  Equation \eqref{decom2} can be argued similarly: let $J$, $k$, and $\rho$ as stated. The existence of $b_{j,k}$ for $j\in J(k)$ can be argued in a similar way.
Then we can use, as before, the limit method; observe that, for $j\in J(k)$,
\begin{align*}
  b_{j,k}&=\lim_{\rho\to -j^2}(\rho+j^2)(\frac{1}{\prod_{i\in J(k)}(\rho+i^2)}-\sum_{i\in J(k)\backslash\{j\}}\frac{b_{i,k}}{(\rho+i^2)})\\
  &=\frac{1}{\prod_{i\in J(k)\backslash\{j\}}(-j^2+i^2)}-\lim_{\rho\to -j^2}(\rho+j^2)\sum_{i\in J(k)\backslash\{j\}}\frac{b_{i,k}}{(\rho+i^2)}
  =\frac{1}{\prod_{i\in J(k)\backslash\{j\}}(i^2-j^2)},
\end{align*}
and \eqref{decom2} follows.
\end{proof}

\begin{lemma}\label{hc}
 Let $n,j\in\N$ and $j>n$, then
 \begin{align*}
j^{2n-1}=2\sum_{k=1}^n \frac{(-1)^{k-n-1} (j+n)!}{(n+k)!(n-k)!(k^2-j^2)(j-n-1)!}k^{2n}.
 \end{align*}
 \end{lemma}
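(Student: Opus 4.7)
The plan is to derive the identity directly from Lemma \ref{lm} by specializing the polynomial identity \eqref{decom} to the value $\rho = -j^2$ and simplifying the resulting product using elementary factorial manipulations. Since $j > n$, the value $-j^2$ is not among the roots $\{-1^2,\ldots,-n^2\}$ of the polynomials $\prod_{k'\in I\setminus\{k\}}(\rho + k'^2)$, so this substitution is meaningful and all denominators that appear below are nonzero.

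First I would substitute $\rho = -j^2$ into \eqref{decom}, which (with $I = \{1,\ldots,n\}$) gives
\begin{equation*}
(-1)^{n-1} j^{2n-2} \;=\; \sum_{k=1}^n \frac{2(-1)^{k-n}\,k^{2n}}{(n+k)!(n-k)!}\,\prod_{k'\in I\setminus\{k\}}(k'^2 - j^2).
\end{equation*}
The next step is to rewrite each $k$-summand factor by pulling out a missing term:
\begin{equation*}
\prod_{k'\in I\setminus\{k\}}(k'^2 - j^2) \;=\; \frac{1}{k^2 - j^2}\prod_{k'=1}^n (k'^2 - j^2).
\end{equation*}

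The heart of the computation is to evaluate $\prod_{k'=1}^n (k'^2 - j^2)$ as a ratio of factorials. Writing $k'^2 - j^2 = -(j-k')(j+k')$ and using $j > n$, one has
\begin{equation*}
\prod_{k'=1}^n (k'^2 - j^2) \;=\; (-1)^n \prod_{k'=1}^n (j-k') \prod_{k'=1}^n (j+k') \;=\; (-1)^n\,\frac{(j-1)!}{(j-n-1)!}\cdot\frac{(j+n)!}{j!} \;=\; \frac{(-1)^n (j+n)!}{j\,(j-n-1)!}.
\end{equation*}
This factor is independent of $k$ and can therefore be moved outside the sum.

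Substituting back, I obtain
\begin{equation*}
(-1)^{n-1} j^{2n-2} \;=\; \frac{(-1)^n (j+n)!}{j\,(j-n-1)!}\sum_{k=1}^n \frac{2(-1)^{k-n}\,k^{2n}}{(n+k)!(n-k)!(k^2 - j^2)}.
\end{equation*}
Multiplying both sides by $j$ and by $(-1)^{n-1}$ (noting $(-1)^{n-1}(-1)^{n}(-1)^{k-n} = (-1)^{k-n-1}$ since $(-1)^{2n} = 1$) yields exactly the claimed identity. The only real obstacle is bookkeeping: tracking the signs and making sure the factorial rewrites are correct, which is routine under the standing hypothesis $j>n$.
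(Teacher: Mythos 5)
Your proposal is correct and follows essentially the same route as the paper: substitute $\rho=-j^2$ into \eqref{decom}, factor out the missing term $(k^2-j^2)^{-1}$, and evaluate $\prod_{i=1}^n(j^2-i^2)=\frac{(j+n)!}{j\,(j-n-1)!}$ before rearranging signs. The factorial computation and sign bookkeeping check out, so no changes are needed.
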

 \begin{proof}
Let $n,j\in\N$, $j>n$, and $I:=\{1,\ldots,n\}$. By \eqref{decom} with $\rho=-j^2$,
\begin{align*}
(-1)^{n-1}j^{2n-2}&=\sum_{k\in I} \Big(2\frac{(-1)^{k-n} k^{2n}}{(n+k)!(n-k)!}\Big)\prod_{i\in I\backslash \{k\}}(-j^2+i^2)\\
&=\sum_{k\in I} 2\frac{(-1)^{k-n-1} k^{2n}\prod_{i\in I}(j^2-i^2)}{(n+k)!(n-k)!(k^2-j^2)}(-1)^{n-1},
\end{align*}
and the claim follows, since $\prod_{i\in I} j^2-i^2=\prod_{i\in I} (j-i)(j+i)=\frac{(j+n)!}{j(j-n-1)!}$.
\end{proof}

\begin{lemma}\label{const:lemma}
Let $\m\in\N$ and $s\in(0,m)$, then 
\begin{align}
\int_0^{\infty} \frac{\rho^{s-1}}{\prod_{k=1}^{\m}(\rho+k^2)}\ d\rho
=\left\{\begin{aligned}\label{c1}
&2\Gamma(s)\Gamma(1-s)\sum_{k=1}^{\m}\frac{(-1)^{k+1}k^{2s}}{(m-k)!(m+k)!},&& \quad \text{ if }s\not\in\N,\\
&4\sum_{k=1}^m \frac{(-1)^{k-s+1} k^{2s}}{(m+k)!(m-k)!}\ln(k),&&\quad  \text{if $s\in\N$,}
\end{aligned}\right.
\end{align}
\end{lemma}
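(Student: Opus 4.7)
The plan is to apply the partial-fraction decomposition from Lemma \ref{lm} to reduce the integral to simpler pieces, evaluate directly when $s\in(0,1)$, extend to other non-integer $s\in(0,m)$ by subtracting off divergent tails, and recover the integer case by a limit.

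First, by identity \eqref{decom2} of Lemma \ref{lm} applied with $J(k)=\{1,\ldots,m\}$, we write $\frac{1}{\prod_{k=1}^m(\rho+k^2)}=\sum_{k=1}^m\frac{b_k}{\rho+k^2}$, and using the algebraic identity $(m+k)!(m-k)!=(-1)^{k+1}2k^2\prod_{j\ne k}(j^2-k^2)$ from the proof of Lemma \ref{lm} we rewrite $b_k=\frac{2(-1)^{k+1}k^2}{(m+k)!(m-k)!}$; in particular the target sum equals $\frac{1}{2}\sum_k b_k k^{2s-2}$. For $s\in(0,1)$, the substitution $\rho=k^2u$ combined with \eqref{gamma:id:2} gives $\int_0^\infty\rho^{s-1}/(\rho+k^2)\,d\rho=k^{2s-2}\Gamma(s)\Gamma(1-s)$, and interchanging the finite sum with the integral yields the first case of \eqref{c1} immediately.

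For $s\in(n,n+1)$ with $n\in\{1,\ldots,m-1\}$ the individual summands diverge, so we expand $\frac{1}{\rho+k^2}=\sum_{j=0}^{n-1}\frac{(-k^2)^j}{\rho^{j+1}}+\frac{(-k^2)^n}{\rho^n(\rho+k^2)}$ as a geometric series plus remainder, sum in $k$, and use that the moments $\sum_k b_k k^{2j}$ vanish for $j\in\{0,\ldots,m-2\}$. The moment vanishing is obtained by expanding both sides of the partial-fraction decomposition as Laurent series at infinity: the left-hand side starts at $\rho^{-m}$, whereas the right is $\sum_{l\ge 0}(-1)^l\rho^{-l-1}\sum_k b_k k^{2l}$, so matching coefficients of $\rho^{-l-1}$ for $l\le m-2$ forces the first $m-1$ moments to cancel. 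This yields $\frac{1}{\prod_{k=1}^m(\rho+k^2)}=\frac{(-1)^n}{\rho^n}\sum_k\frac{b_k k^{2n}}{\rho+k^2}$, and substituting $t=s-n\in(0,1)$ together with the reflection $\Gamma(s-n)\Gamma(n+1-s)=(-1)^n\Gamma(s)\Gamma(1-s)$ (from $\sin(\pi(s-n))=(-1)^n\sin(\pi s)$) reduces the claim to the case already treated.

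For integer $s=n\in\{1,\ldots,m-1\}$ the left-hand side is continuous in $s$ by dominated convergence, so we pass to the limit from nearby non-integer values. The sum $\sum_k\frac{(-1)^{k+1}k^{2n}}{(m+k)!(m-k)!}=\frac{1}{2}\sum_k b_k k^{2(n-1)}$ vanishes by the same moment identity (as $n-1\le m-2$), so the right-hand side is an indeterminate $0\cdot\infty$ at $s=n$. L'H\^opital's rule with $\frac{d}{ds}k^{2s}=2k^{2s}\ln k$ and $\frac{d}{ds}\sin(\pi s)|_{s=n}=(-1)^n\pi$ then produces exactly the logarithmic second case of \eqref{c1}. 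The main obstacle is the subtraction step for $s\in(n,n+1)$ with $n\ge 1$: establishing the moment vanishing $\sum_k b_k k^{2j}=0$ cleanly and organizing the cancellation of the divergent tails, together with careful sign-tracking through the reflection of $\sin(\pi(s-n))$.
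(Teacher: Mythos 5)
Your proof is correct, and it follows a genuinely different route from the paper's. The paper first decomposes $\rho^{n-1}$ in the polynomial basis $\{\prod_{j\in I\setminus\{k\}}(\rho+j^2)\}$ (identity \eqref{decom}), then applies partial fractions to the remaining product, and needs the combinatorial identity of Lemma \ref{hc} to recombine everything into the single sum $2\lim_{\eta\to\infty}\sum_{k=1}^m\frac{(-1)^{k-n}k^{2n}}{(m+k)!(m-k)!}\int_0^\eta\frac{\rho^{\sigma-1}}{\rho+k^2}\,d\rho$; the integer case is then settled \emph{directly} at $\sigma=1$ by showing the divergent $\ln(\eta+k^2)$ terms cancel through the moment identity \eqref{zs}. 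You instead partial-fraction the full product at once with the explicit coefficients $b_k=\frac{2(-1)^{k+1}k^2}{(m+k)!(m-k)!}$, obtain the moment vanishing $\sum_k b_kk^{2j}=0$, $j\le m-2$, by matching Laurent coefficients at infinity (this is equivalent to the paper's \eqref{zs}, which comes from the finite-difference Lemma \ref{calc:lemma}, but your derivation is self-contained), and then your geometric-series tail subtraction yields the clean reduction $\frac{1}{\prod_k(\rho+k^2)}=\frac{(-1)^n}{\rho^n}\sum_k\frac{b_kk^{2n}}{\rho+k^2}$, which brings $s\in(n,n+1)$ back to the base case $s-n\in(0,1)$ and avoids both \eqref{decom} and Lemma \ref{hc} entirely; your signs through $\sin(\pi(s-n))=(-1)^n\sin(\pi s)$ check out and reproduce the stated constants. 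The trade-off is in the integer case: the paper's computation is self-contained at $s=n$ and exhibits the logarithmic cancellation explicitly, whereas you obtain it indirectly by continuity of the left-hand side (dominated convergence, which works since $n\le m-1$) plus a $0/0$ L'H\^opital limit of the non-integer formula, using $S(n)=\sum_k\frac{(-1)^{k+1}k^{2n}}{(m+k)!(m-k)!}=0$ and $\frac{d}{ds}\sin(\pi s)|_{s=n}=(-1)^n\pi$ — also correct, and arguably closer in spirit to how the paper itself treats integer values of $s$ in Lemma \ref{a:lemma}.
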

\begin{proof}
Fix $\m\in\N$, $n\in\{1,\ldots,m\}$, $s\in(n-1,n]\cap (0,m)$, $I:=\{1,\ldots,n\}$, and $J:=\{1,\ldots,m\}\backslash I$. For $k\in I$ let $j\in J(k):=J\cup\{k\}$ and 
\begin{align*}
 a_{k,n}=2\frac{(-1)^{k-n} k^{2n}}{(n+k)!(n-k)!},\qquad b_{j,k}:=\frac{1}{\prod_{i\in J(k)\backslash \{j\}}(i^2-j^2)}.
\end{align*}
Then, by Lemma \ref{lm}, $\rho^{s-1}=\rho^{\sigma-1}\sum_{k\in I} a_{k,n}\Pi_{j\in I\backslash\{k\}}(\rho+j^2)$, where $\sigma:=s-n+1\in(0,1]$, and therefore 
 \begin{align*}
 \int_0^{\infty} \frac{\rho^{s-1}}{\prod_{k=1}^{\m}(\rho+k^2)}\ d\rho
 =\lim_{\eta\to \infty}\sum_{k=1}^n \int_0^{\eta} \frac{a_{k,n}\ \rho^{\sigma-1}}{\prod_{j\in J(k)}(\rho+j^2)}\ d\rho
 =\lim_{\eta\to \infty}\sum_{k\in I}\sum_{j\in J(k)} a_{k,n} b_{j,k}\int_0^{\eta}\frac{\rho^{\sigma-1}}{(\rho+j^2)}\ d\rho.
 \end{align*}
 Let $A_{\eta,\sigma,j}:=\int_0^{\eta}\frac{\rho^{\sigma-1}}{\rho+j^2}\ d\rho$, then 
 \begin{align}
 &\int_0^{\infty} \frac{\rho^{s-1}}{\prod_{k=1}^{\m}(\rho+k^2)}\ d\rho
 =2\lim_{\eta\to \infty}\sum_{k\in I} \frac{(-1)^{k-n} k^{2n}}{(n+k)!(n-k)!}\Big(\frac{A_{\eta,\sigma,k}}{\prod_{i\in J}(i^2-k^2)}+\sum_{j\in J} \frac{A_{\eta,\sigma,j}}{\prod_{i\in J(k)\backslash \{j\}}(i^2-j^2)}\Big)\label{c2}
  \end{align}

Note that, for $j\in J$ and $k\in I$,
\begin{align*}
\prod_{i\in J(k)\backslash \{j\}}(i^2-j^2)=\frac{(-1)^{j-1-n}(k-j)(k+j)(m+j)!(m-j)!(j-n-1)!}{2\ j\ (j+n)!}.
\end{align*}
Therefore, by Lemma \ref{hc},
\begin{align}
 &\sum_{k\in I}\frac{(-1)^{k-n} k^{2n}}{(n+k)!(n-k)!}\sum_{j\in J} \frac{1}{\prod_{i\in J(k)\backslash \{j\}}(i^2-j^2)}\ A_{\eta,\sigma,j}\nonumber\\
 &=\sum_{k\in I}\frac{ k^{2n}}{(n+k)!(n-k)!}\sum_{j\in J} \frac{2(-1)^{k+j-1}\ j\ (j+n)!}{(k-j)(k+j)(m+j)!(m-j)!(j-n-1)!}\ A_{\eta,\sigma,j}\nonumber\\
 &=\sum_{j\in J}\frac{(-1)^{j-n}A_{\eta,\sigma,j}}{(m+j)!(m-j)!}j^{2n}\Big(2j^{1-2n}\sum_{k\in I}\frac{ (-1)^{k-1-n}k^{2n}(j+n)!}{(n+k)!(n-k)!
(k-j)(k+j)(j-n-1)!}\Big)\nonumber\\
  &=\sum_{j\in J}\frac{(-1)^{j-n}A_{\eta,\sigma,j}}{(m+j)!(m-j)!}j^{2n}
  =\sum_{k\in J}\frac{(-1)^{k-n}A_{\eta,\sigma,k}}{(m+k)!(m-k)!}k^{2n}.\label{c2:2}
\end{align}

Furthermore, since $I\cup J=\{1,2,\ldots,m\}$ and  $(n+k)!(n-k)!\prod_{i\in J}(i^2-k^2)=(m+k)!(m-k)!$ we have from \eqref{c2} and \eqref{c2:2} that
\begin{align}\label{c2:3}
 \int_0^{\infty} \frac{\rho^{s-1}}{\prod_{k=1}^{\m}(\rho+k^2)}\ d\rho
 =2\lim_{\eta\to \infty}\sum_{k=1}^m\frac{(-1)^{k-n}k^{2n}}{(m+k)!(m-k)!}A_{\eta,\sigma,k}.
\end{align}

To conclude the proof we consider two cases. In the \emph{first case}, assume that $s\in(n-1,n)$. Then $\sigma\in(0,1)$ and
\begin{align}\label{c2:4}
 A_{\eta,\sigma,k}=\int_0^{\eta}\frac{\rho^{\sigma-1}}{\rho+k^2}\ d\rho=
 k^{2\sigma-2}\int_0^{\eta}\frac{\rho^{\sigma-1}}{\rho+1}\ d\rho.
 \end{align}
Thus, by \eqref{c2:3} and \eqref{c2:4}, we have that
\begin{align*}
 \int_0^{\infty} \frac{\rho^{s-1}}{\prod_{k=1}^{\m}(\rho+k^2)}\ d\rho
 =2\lim_{\eta\to \infty}\int_0^{\eta}\frac{\rho^{\sigma-1}}{\rho+1}\ d\rho\sum_{k=1}^m\frac{(-1)^{k-n}k^{2s}}{(m+k)!(m-k)!}
\end{align*}
and \eqref{c1} follows from \eqref{gamma:id:2}.

For the \emph{second case}, assume that $s=n$. Since $s\in(0,m)$ we have that $n<m$ and $\sigma=1$. Then
\begin{align}\label{c2:5}
 A_{\eta,\sigma,k}=\int_0^{\eta}\frac{1}{\rho+k^2}\ d\rho=\ln(\eta+k^2)-\ln(k^2).
\end{align}

By Lemma \ref{calc:lemma},
\begin{align}\label{zs}
 \sum_{k=1}^m \frac{(-1)^{k}\ k^{2n}}{(m+k)!(m-k)!}=\frac{1}{2(2m)!}\sum_{k=0}^{2m} (-1)^{k}{\binom{2m}{k}}\ (k-m)^{2n}=0,
\end{align}
because $2n<2m$. Let $I_1$ and $I_2$ be the even and odd numbers in $\{1,\ldots, m\}$ respectively. Then, by \eqref{zs}, 
\begin{align*}
  A_1:= \sum_{k\in I_1} \frac{(-1)^{k}\ k^{2n}}{(m+k)!(m-k)!}=\sum_{k\in I_2} \frac{(-1)^{k}\ k^{2n}}{(m+k)!(m-k)!}=:A_2
\end{align*}
and therefore, by the properties of logarithms, 
\begin{equation}\label{c2:6}
\begin{aligned}
\lim_{\eta\to \infty}\sum_{k=1}^m\frac{(-1)^{k}k^{2n}}{(m+k)!(m-k)!}&\ln(\eta+k^2)=\ln\Bigg (\lim_{\eta\to \infty}\frac{\prod_{k\in I_1}(\eta+k^2)^\frac{k^{2n}}{(m+k)!(m-k)!}}{\prod_{k\in I_2}(\eta+k^2)^\frac{k^{2n}}{(m+k)!(m-k)!}}\Bigg )\\
&=\ln\Bigg (\lim_{\eta\to \infty}\frac{\eta^{A_1}\prod_{k\in I_1}(1+\frac{k^2}{\eta})^\frac{k^{2n}}{(m+k)!(m-k)!}}{\eta^{A_2}\prod_{k\in I_2}(1+\frac{k^2}{\eta})^\frac{k^{2n}}{(m+k)!(m-k)!}}\Bigg )
=\ln(1)=0.
\end{aligned}
\end{equation}

From \eqref{c2:3}, \eqref{c2:5}, and \eqref{c2:6}, we conclude that
\begin{align*}
 \int_0^{\infty} \frac{\rho^{s-1}}{\prod_{k=1}^{\m}(\rho+k^2)}\ d\rho
 =2\sum_{k=1}^m \frac{(-1)^{k-n} k^{2n}}{(m+k)!(m-k)!}(-\ln(k^2))
 =4\sum_{k=1}^m \frac{(-1)^{k-s+1} k^{2n}}{(m+k)!(m-k)!}\ln(k),
\end{align*}
as claimed, and this ends the proof.
\end{proof}

\begin{proof}[Proof of Theorem \ref{const:lemma:2}]
By Fubini's theorem, polar coordinates, and \eqref{beta:id},
\begin{align}
\int_{\R^N} \frac{(1-\cos(y_1))^{\m}}{|y|^{N+2s}}\ dy&=2\int_0^{\infty} \frac{(1-\cos(t))^{\m}}{t^{1+2s}}\ dt \int_{\R^{N-1}}\frac{1}{(1+|z|^2)^{\frac{N+2s}{2}}}\ dz\nonumber\\
&=2\frac{2\pi^{\frac{N-1}{2}}}{\Gamma(\frac{N}{2}-\frac{1}{2})}\int_0^{\infty} \frac{(1-\cos(t))^{\m}}{t^{1+2s}}\ dt \int_{0}^{\infty}\frac{\rho^{N-2}}{(1+\rho^2)^{\frac{N+2s}{2}}}\ d\rho\nonumber\\
&=\frac{2 \Gamma(s+\frac{1}{2})\pi^{\frac{N-1}{2}}}{\Gamma(\frac{N}{2}+s)}\int_0^{\infty} \frac{(1-\cos(t))^{\m}}{t^{1+2s}}\ dt,\label{1:m}
\end{align}

Furthermore, since $1-\cos(2a)=1-\cos^2(a)+\sin^2(a)=2\sin^2(a)$ for $a\in\R$,
\begin{align}
\int_{0}^{\infty} \frac{(1-\cos(t))^{\m}}{t^{1+2s}}\ dt= 2^\m\int_0^{\infty} \sin^{2\m}(t/2)t^{-1-2s}\ dt=2^{\m-2s}\int_0^{\infty} \sin^{2\m}(t)t^{-1-2s}\ dt.\label{2:m}
\end{align}
Using a change of variables, \eqref{Gamma:def}, \eqref{Lapl:id}, and Fubini's theorem,
\begin{align}
\Gamma(1+2s)&\int_0^{\infty} \sin^{2\m}(t)t^{-1-2s}\ dt=\int_0^{\infty}\int_0^{\infty} e^{-w} w^{2s} \sin^{2\m}(t) t^{-1-2s}\ dt\ dw\nonumber\\
&=\int_0^{\infty} \int_0^{\infty} e^{-w} \sin^{2\m}(rw) r^{-1-2s}\ dw\ dr=\int_0^{\infty} r^{-2s} \int_0^{\infty} e^{-v/r}\sin^{2\m}(v)\ dv \ dr\nonumber\\
&=\int_0^{\infty} r^{-2-2s} \cL(\sin^{2\m})(r^{-1})\ dr= \int_0^{\infty}\rho^{2s} \cL(\sin^{2\m})(\rho)\ d\rho\nonumber\\
&= (2\m)!\int_0^{\infty}\frac{\rho^{2s-1} }{\prod_{k=1}^{\m}(4k^2+\rho^2)}\ d\rho=(2\m)!2^{2s-2\m} \int_0^{\infty} \frac{\rho^{2s-1} }{\prod_{k=1}^{\m}(k^2+\rho^2)}\ d\rho\nonumber\\
&=(2\m)!2^{2s-2\m-1} \int_0^{\infty} \frac{\rho^{s-1}}{\prod_{k=1}^{\m}(k^2+\rho)}\ d\rho.\label{4:m}
\end{align}
Therefore, by \eqref{1:m}, \eqref{2:m}, \eqref{4:m}, and \eqref{Gammaid}, we obtain
\begin{align*}
\gamma_{N,\m,s}&:=2^{\m} \int_{\R^N} \frac{(1-\cos(y_1))^{\m}}{|y|^{N+2s}}\ dy=2^{\m}\frac{2 \Gamma(s+\frac{1}{2})\pi^{\frac{N-1}{2}}}{\Gamma(\frac{N}{2}+s)}\int_0^{\infty} \frac{(1-\cos(t))^{\m}}{t^{1+2s}}\ dt\\
&=2^{\m}\frac{2 \Gamma(s+\frac{1}{2})\pi^{\frac{N-1}{2}}}{\Gamma(\frac{N}{2}+s)}2^{\m-2s}\int_0^{\infty} \sin^{2\m}(t)t^{-1-2s}\ dt\\
&=\frac{ \Gamma(s+\frac{1}{2})\pi^{\frac{N-1}{2}}}{\Gamma(1+2s)\Gamma(\frac{N}{2}+s)}(2\m)! \int_0^{\infty} \frac{\rho^{s-1}}{\prod_{k=1}^{\m}(k^2+\rho)}\ d\rho\\
&=\frac{(2\m)!\pi^{\frac{N}{2}}}{4^{s}\Gamma(s+1)\Gamma(\frac{N}{2}+s)} \int_0^{\infty} \frac{\rho^{s-1}}{\prod_{k=1}^{\m}(k^2+\rho)}\ d\rho.
\end{align*}
But then $\gamma_{N,m,s}=\frac{2}{c_{N,\m,s}}$, by Lemma \ref{const:lemma}, and this ends the proof.
\end{proof}

\begin{proof}[Proof of Theorem \ref{main:thm}]
Let $u\in\cC^\infty_c(\R^N)$. By the properties of the Fourier transform 
we have, for $\xi\in \R^N\setminus \{0\}$, that
\begin{align*}
\cF(L_{m,s}u)(\xi)&=\frac{c_{N,m,s}}{2}\int_{\R^N} \frac{\cF(\delta_\m u(\cdot,y))(\xi)}{|y|^{N+2s}}\ dy=\frac{c_{N,m,s}}{2}\int_{\R^N}\frac{\delta_{\m} f(0,\xi\cdot y)}{|y|^{N+2s}}\ dy\cF(u)(\xi),
\end{align*}
where $f(t):=\exp(it)$ for $t\in \R$. Moreover, by \eqref{iteration2} and a change of variables,
\begin{align*}
\int_{\R^N}\frac{\delta_{\m} f(0,\xi\cdot y)}{|y|^{N+2s}}\ dy&= 2^{\m}\int_{\R^N}\frac{(1-\cos(\xi\cdot y))^{\m}}{|y|^{N+2s}}\ dy\\
&=2^{\m}|\xi|^{2s}\int_{\R^N}\frac{(1-\cos(\frac{\xi}{|\xi|}\cdot y))^{\m}}{|y|^{N+2s}}\ dy
=|\xi|^{2s}2^{\m} \int_{\R^N} \frac{(1-\cos(y_1))^{\m}}{|y|^{N+2s}}\ dy,
\end{align*}
where the last equality follows by a suitable rotation (\emph{c.f.} \cite[Proposition 3.3]{NPV11}). Then, by Theorem~\ref{const:lemma:2}, 
$\cF(L_{m,s}u)(\xi)=|\xi|^{2s}\cF(u)(\xi)$ for all $\xi\in\R^N\backslash\{0\},$ as claimed. Finally, the identity at $\xi=0$ follows from the fact that
\begin{align*}
\int_{\R^N} L_{m,s}u(x)\;dx\ =\ \frac{c_{N,m,s}}{2}\int_{\R^N}{|y|}^{-N-2s}\int_{\R^N}\delta_m u(x,y)\;dx\;dy\ =\ 0,
\end{align*}
by Fubini's theorem, \eqref{calc}, and a change of variables.
\end{proof}

\end{document}